\documentclass[reqno,a4paper]{amsart}
\usepackage[utf8]{inputenc}
\usepackage[T1]{fontenc}
\usepackage[english]{babel}
\usepackage{lmodern}
\usepackage{amsmath,amssymb,amsfonts,amsthm,mathtools,bm}

\usepackage[shortlabels]{enumitem}
\usepackage{graphicx}
\usepackage{subcaption}
\usepackage[margin=2cm]{geometry}
\usepackage[hidelinks,breaklinks]{hyperref}
\usepackage[numbers]{natbib}
\setitemize{itemsep=0.3em}
\setenumerate{itemsep=0.3em}
\usepackage[dvipsnames]{xcolor}
\usepackage{comment}
\usepackage{bbm}

\makeatletter
\def\paragraph{\@startsection{paragraph}{4}%
  \z@{3pt}{-\fontdimen2\font}%
  {\normalfont\bfseries}}
\makeatother

\newtheorem{theorem}{Theorem}[section]
\newtheorem{lemma}[theorem]{Lemma}
\newtheorem{proposition}[theorem]{Proposition}
\newtheorem{corollary}[theorem]{Corollary}
\newtheorem{claim}[theorem]{Claim}

\theoremstyle{definition}

\newtheorem{definition}[theorem]{Definition}

\usepackage{cleveref}
\crefname{theorem}{Theorem}{Theorems}
\Crefname{theorem}{Theorem}{Theorems}
\crefname{lemma}{Lemma}{Lemmas}
\Crefname{lemma}{Lemma}{Lemmas}
\crefname{claim}{Claim}{Claims}
\Crefname{claim}{Claim}{Claims}
\crefname{definition}{Definition}{Definitions}
\Crefname{definition}{Definition}{Definitions}
\crefname{section}{Section}{Sections}
\Crefname{section}{Section}{Sections}

\newcommand{\eps}{\varepsilon}
\newcommand{\e}{\mathrm{e}}
\newcommand{\ex}{\mathrm{ex}}

\newcommand{\EM}{\mathrm{EM}}

\newcommand{\Po}{\mathrm{Po}}

\newcommand{\cF}{\mathcal{F}}

\usepackage{etoolbox}

\newcommand{\blfootnote}[1]{%
  \begingroup
  \renewcommand\thefootnote{}% Removes the number completely from the text
  \footnotetext{#1}% Places the text at the bottom
  \endgroup
}

\title{Long induced cycles in pseudorandom graphs}

\author[Diskin]{Sahar Diskin\textsuperscript{*}}

\author[Lichev]{Lyuben Lichev\textsuperscript{\textdagger}}

\author[Krivelevich]{Michael Krivelevich\textsuperscript{$\ddagger$}}

\author[Markbreit]{Itay Markbreit\textsuperscript{\S}}

\begin{document}

\begin{abstract}
We show that, for some absolute constants $c_1,c_2>0$, every $(n,d,\lambda)$-graph with $\lambda\le c_1 d$ contains an induced cycle of length at least $c_2n\log(d/\lambda)/d$. This is best possible up to the values of $c_1,c_2$. Our techniques include a multi-scale algorithmic analysis, an adapted depth-first exploration procedure, a link to percolation theory, and estimates for random row-and-column extraction in symmetric matrices.
\end{abstract}

\maketitle

\section{Introduction}

\blfootnote{\textsuperscript{*}D-MATH ETH Zurich, R\"amistrasse 101, 8092 Z\"urich, Switzerland. Email: \texttt{Sahardiskinmail@gmail.com}.}
\blfootnote{\textsuperscript{\textdagger}Institute of Statistics and Mathematical Methods in Economics, TU Wien, A-1040 Vienna, Austria.\\ Email: \texttt{lyuben.lichev@tuwien.ac.at}.}
\blfootnote{\textsuperscript{$\ddagger$}School of Mathematical Sciences, Tel Aviv University, Tel Aviv 6997801, Israel. Email: \texttt{krivelev@tauex.tau.ac.il}. Research supported in part by NSF-BSF grant 2023688.}
\blfootnote{\textsuperscript{\S}School of Mathematical Sciences, Tel Aviv University, Tel Aviv 6997801, Israel. Email: \texttt{markbreit@mail.tau.ac.il}.}

Long cycles in random graphs have been a main theme in probabilistic combinatorics literature for many decades.
Classic benchmark problems include the study of long cycles~\cite{Ana23,ADEKKL25,AEKP25,AF21,CDEK24,EJ08,E-DLW24,Fri86,LMS26,Rio14} and Hamiltonicity~\cite{AKS85,B83a,B84,BJ26,CE-DGKO24,DMM-CPS24,FF84,KS83,K76,M19,NST19,P76,RW92,RW94,SV08}.
A related popular and fruitful direction of research studies the length of a longest induced cycle.
In a pioneering work, Frieze and Jackson~\cite{FJ87} showed that binomial random graphs with fixed expected degree and random regular graphs with fixed degree $d\ge 3$ typically contain induced cycles of linear length.
Their result on binomial random graphs was improved by Łuczak~\cite{L93} and, independently, by Suen~\cite{S92}, proving that whenever $c>1$, $G(n,c/n)$ typically contains an induced cycle of length at least $(1+o(1))(\log c)n/c$. A simple first‑moment argument implies that the typical length of a longest induced cycle for large $c$ in $G(n,c/n)$ is at most $(1+o_c(1))\cdot 2(\log c)n/c$. Finally, this upper bound was shown to be asymptotically tight for large $c$ by Draganić, Glock, and Krivelevich~\cite{DGK22}.
%Their result on binomial random graphs was improved by Dragani{\'c}, Glock and Krivelevich~\cite{DGK22} who showed that $G(n,c/n)$ typically contains an induced cycle of length approximately $2(\log c)n/c$ for large $c$, which is asymptotically optimal.
In the other extreme when $c$ is close to 1, the same authors~\cite{DGK22b} showed that $G(n,c/n)$ typically contains an induced path of length $\Theta((c-1)^2n)$, which coincides with the order of the 2-core in the giant component up to the value of the hidden constant (see, for example, \cite[Lemma 2.16]{FK15}).
In the setting of random graphs on more general degree sequences, Enriquez, Faraud, M\'enard and Noiry~\cite{EFMN21} provided lower bounds for the length of a longest induced cycle by sharpening the analysis of the algorithm used in~\cite{FJ87}.
We direct the reader to the survey of Frieze~\cite{F19} for further results on cycles in random graphs.

In the context of the mentioned results, it is natural to wonder which features of the random graph determine the length of its longest cycle, induced or not.
In this direction, Dragani\'c, Montgomery, Munh\'a Correia, Pokrovskiy and Sudakov~\cite{DMM-CPS24} showed that sufficiently strong vertex-expansion implies Hamiltonicity, thus generalising many earlier results~\cite{B83a,FK02,zbMATH07945724,zbMATH05848528,KS03,P76}, a very recent work of Brada\v{c} and Janzer~\cite{BJ26} relaxed the expansion condition at the price of requiring approximate $d$-regularity where $d\ge \mathrm{polylog}(n)$.
While the mentioned criteria for Hamiltonicity need expansion on all scales, expansion on large, small or fixed scales is already enough to guarantee the existence of long paths and cycles.
For example, Krivelevich and Sudakov~\cite{KS13} have shown that graphs with large minimum degree $d$ percolated with parameter $p$ where $pd\ge 1+\eps$ typically contain a path of order $\Omega(d)$. 
This result was strengthened in the regime $pd\to \infty$ to containment of a cycle of length at least $d-o(d)$ by Krivelevich, Lee and Sudakov~\cite{KLS15} and by Riordan~\cite{Rio14}.
In graphs where sets up to size $k$ vertex-expand by a factor of $d$, Collares, Diskin, Erde and Krivelevich~\cite{CDEK24} showed that percolation with the same parameter $p$ where $pd\ge 1+\eps$ typically leaves behind a cycle of length $\Theta(kd)$, and Hollom~\cite{H25} later extended this result to graphs where only sets of a given size $k$ are required to expand.

A recent line of research studied the related problem of finding long induced paths and cycles in expanders, with particular focus on \emph{$(n,d,\lambda)$-graphs}, which are defined as follows: 
\begin{definition}
    Given a graph $G$, order its eigenvalues (that is, the eigenvalues of its adjacency matrix) by $\lambda_1 \geq \ldots \geq \lambda_n$. 
    The largest eigenvalue, $\lambda_1$, of any $d$-regular graph is easily seen to be $d$, sometimes referred to as the trivial eigenvalue of $G$. 
    A $d$-regular graph $G$ on $n$ vertices is said to be an \emph{$(n, d, \lambda)$-graph} if $\max\{|\lambda_2|,|\lambda_n|\}\le \lambda$.
\end{definition}
We refer the reader to the survey \cite{KS06} for an extensive discussion on $(n,d,\lambda)$-graphs and other notions of pseudorandom graphs.
%$d$-regular $n$-vertex graphs with eigenvalues $\lambda_1=d$ and $\lambda_2,\ldots,\lambda_n\in [-\lambda,\lambda]$, formally called \emph{$(n,d,\lambda)$-graphs}.
Dragani\'c and Keevash~\cite{DK25} showed that every $(n,d,\lambda)$-graph with $100\lambda\le d^{3/4}$ and $10d<n$ contains an induced path of length at least $n/64d$.
This result was strengthened by Diskin, Krivelevich, Markbreit and Zhukovskii~\cite{DKMZ25} who showed that, for suitable positive constants $\delta_1,\delta_2,\delta_3$, every $(n,d,\lambda)$-graph with $d\le\delta_3 n$ and $\lambda/d\le\delta_1$ contains an induced cycle of length at least $\delta_2 n/d$.
The authors also suggested that induced cycles of length $\Omega(n\log(d/\lambda)/d)$ may exist; indeed, independent sets of this order exist~\cite{AKS99,KS06}, and the bound is sharp up to the hidden constant in that case.
Our main result confirms this prediction, and guarantees the existence of induced cycles of length $\Omega(n\log(d/\lambda)/d)$ for the even broader family of \emph{$(n, (1\pm\gamma)d, \lambda)$-graphs}. A graph $G$ is called an \emph{$(n, (1\pm\gamma)d, \lambda)$-graph} (for some constant $\gamma\coloneq \gamma(n,d,\lambda)\ge 0$) if $G$ has $n$ vertices, each of them having degree in the interval $[(1-\gamma)d, (1+\gamma)d]$, and all eigenvalues of $G$ but the largest one have absolute values at most $\lambda$.

\begin{theorem}\label{th: main}
Fix a constant $\alpha\in (0,1)$. 
Then, there are constants $c_1\coloneqq c_1(\alpha),c_2\coloneqq c_2(c_1,\alpha),c_3\coloneq c_3(\alpha)>0$ such that, for all sufficiently large integers $n,d$ with $d\le \alpha n$, every $\left(n,\left(1\pm c_3(\lambda/d)^{1/3}\right)d,\lambda\right)$-graph $G$ with $\lambda\le c_1 d$ contains an induced cycle of length at least $c_2 n\log(d/\lambda)/d$.
\end{theorem}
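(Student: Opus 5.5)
The plan is a two-stage argument: first a random sparsification that produces a long induced path in a sparse random-like subgraph, then a closing step that turns this path into an induced cycle. Set $\eps\coloneqq \lambda/d\le c_1$, and let $K$ be a large constant times $\log(1/\eps)$.

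\emph{Stage one: sparsification.} Choose a random vertex subset $S$ by keeping each vertex independently with probability $p\coloneqq K/d$. Any induced cycle in $G[S]$ is an induced cycle in $G$, so it suffices to find one of length $\Omega(|S|\log(1/\eps)/K)$ in $G[S]$; since $|S|\approx Kn/d$, this is the target length.

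The graph $G[S]$ has degrees about $K$. By the expander mixing lemma, its edge distribution is controlled down to sets of size about $(\lambda/d)^2 n$ in $G$, which corresponds to sets of size about $\eps^2 K n/d$ in $S$. Random row-and-column extraction estimates for the adjacency matrix of $G$ show that $G[S]$ inherits a spectral gap. Concretely, the centred matrix restricted to $S$ has norm $O(p\lambda+\sqrt{pd})$. The $\sqrt{pd}=\sqrt K$ term is the main constraint, and it is why $K$ must be chosen carefully relative to $1/\eps$.

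\emph{Stage two: a multi-scale DFS for a long induced path.} In $G[S]$, run an adapted depth-first exploration that maintains an induced path $P$ together with a set $D$ of ``dead'' vertices: those that are neighbours of interior path vertices, or that were explored and discarded. At each step the process tries to extend from the endpoint of $P$ to a fresh vertex with no neighbours on $P$ except that endpoint.

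The analysis runs across scales $j=1,\dots,\log(1/\eps)$ and uses the percolation link. At scale $j$, the $G$-neighbourhoods of the current path are compared to a branching process whose offspring mean is roughly $K e^{-j}$-type. This yields a gain in path length of order $|S|/K$ per scale, while the dead set grows by only a constant fraction of what remains. The mixing lemma guarantees that every set of $\Omega(\eps^{2} |S|)$ remaining vertices spans edges, so the DFS cannot get stuck early. Summing over the $\log(1/\eps)$ scales gives an induced path of length $\Omega(|S|\log(1/\eps)/K)$.

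\emph{Closing the cycle.} Keep about a quarter of $S$ in reserve. Take two long disjoint segments of the path near its ends, and in the reserve grow small trees from each end that avoid the neighbourhood of the path. Expansion then gives an edge between two such trees, yielding a short connector with no chords to the path. The resulting cycle is induced and loses only a constant fraction of the length.

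\emph{Main obstacle.} The hardest step is controlling the dead set in the multi-scale DFS. One must show that the neighbourhood of the path, which is up to $K$ times larger than the path itself, does not eat the remaining vertices too fast.

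My first attempt would be to track $|D|$ through the degree bound $(1\pm c_3\eps^{1/3})d$ together with the random-extraction norm bound. The $\eps^{1/3}$ degree tolerance is exactly what keeps the accumulated error across $\log(1/\eps)$ scales below the mixing-lemma threshold.
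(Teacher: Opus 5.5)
There is a genuine gap, and it is in Stage one: sparsifying to degree $K=\Theta(\log(1/\eps))$, with your $\eps=\lambda/d$, throws away exactly the logarithm you are trying to gain. Since $|S|\approx Kn/d$, your target $\Omega(|S|\log(1/\eps)/K)$ is an induced cycle of length linear in $|S|$ in a graph of average degree $K\to\infty$. Such cycles need not exist.

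Take $G$ a random $d$-regular graph, so $\lambda=O(\sqrt d)$ and $\log(d/\lambda)\asymp\log d$. The expected number (over $G$ and $S$) of independent $t$-sets of $G$ lying inside $S$ is at most $\binom nt p^t\e^{-(1-o(1))dt^2/(2n)}$. For $t=yn/d$ this equals $\exp\bigl(t(\log(\e K/y)-(1-o(1))y/2)\bigr)$, which tends to $0$ once $y\ge 3\log K$. Hence whp every induced cycle of $G[S]$ has length $O(n\log K/d)=O(n\log\log d/d)$, far below $c_2n\log(d/\lambda)/d$.

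Your own spectral estimate already signals this. A centred norm of order $\sqrt{pd}=\sqrt K$ means $G[S]$ has spectral ratio about $K^{-1/2}$, so the most one can extract from $G[S]$ is $\log(d_S/\lambda_S)\approx\frac12\log K$, not $\log(1/\eps)$. Preserving the ratio requires $p\lambda\gtrsim\sqrt{pd\log n}$, i.e.\ $pd\gtrsim\eps^{-2}\log n$. That is precisely hypothesis \eqref{eq:patched-hypotheses} of Theorem~\ref{thm:patched}, and with such $p$ the subsampling only lowers the density: you are left with the original problem.

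The same issue breaks the rest of the argument.
\begin{itemize}
\item A branching process with offspring mean about $K\e^{-j}$ becomes subcritical after roughly $\log K$ scales, so there are not $\log(1/\eps)$ productive scales.
\item The claim that every set of $\Omega(\eps^2|S|)$ vertices of $S$ spans edges is unavailable. The mixing lemma in $G$ only forces edges inside sets of size $\gg\lambda n/d=\eps n$, which exceeds $|S|=Kn/d$ because $K<\lambda$.
\item A path of length $\Theta(|S|)$ in a degree-$K$ graph has a neighbourhood covering essentially all of $S$, so the reserve-based closing step has nothing to work with.
\end{itemize}

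The missing idea is that the $\log(d/\lambda)$ factor must come from the number of scales, while the working degree stays polynomially above $\lambda$. Your Stage two has the right shape: a gain of $|S|/K=n/d$ per scale over $\log(1/\eps)$ scales is exactly the paper's count. But it cannot be run inside an already sparsified $G[S]$.

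In the paper (for $d^6\le n$) one builds nested nearly regular induced subgraphs $G(i)$ of $G$ itself, of degree $d_i=\e^{-ai}d$ with $a$ slightly above $1$, for $i\le k=c_0\log(d/\lambda)$. Then $d_k\ge d(\lambda/d)^{O(c_0)}\gg\lambda$, so the mixing lemma remains effective at every scale. At each scale:
\begin{itemize}
\item a near-critical random set $Y_i$ of density $a/d_{i-1}$ yields one induced path of length $\Theta(n/d)$, via a DFS controlled by vertex expansion of subsets of $Y_i$ and by the small excess of $G(i-1)[Y_i]$;
\item deleting $N_{G(i-1)}[Y_i]$ before the next scale separates this path from all later ones.
\end{itemize}
The $k$ paths are then joined by an induced matching between leaf sets $Z_i^+$ and $Z_{i+1}^-$ chosen to avoid all later $Y_j$.

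Theorem~\ref{thm:patched} is used only under the ratio-preserving hypothesis, to push to $d=o(n)$. A separate deterministic greedy construction handles $d=\Theta(n)$ with small $\lambda$.
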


A few comments are in order.
Since an $(n,d,\lambda)$-graph is also an $(n, (1\pm\gamma)d, \lambda)$-graph for every $\gamma\ge 0$, Theorem~\ref{th: main} applies to $(n,d,\lambda)$-graphs. 
In particular, Theorem~\ref{th: main} extends all previous results on a longest induced cycle~\cite{DKMZ25,DK25,FJ87} and on the largest independent set~\cite{AKS99,KS06} in $(n,d,\lambda)$-graphs.
Furthermore, for $(n,d,\lambda)$-graphs, Theorem~\ref{th: main} is actually optimal up to the value of the constants $c_1,c_2$: for example, for a wide range of $d\coloneq d(n)\ge 3$, the random regular graph $G(n,d)$ is known to be an $(n,d,3\sqrt{d})$-graph with high probability, see e.g.~\cite{zbMATH05037081,Pud15,zbMATH07751065,zbMATH07036340}, but its largest independent set typically has size at most $3n\log(d)/d$ (and thus, its longest induced cycle has at most twice this length).
A deterministic example matching the lower bound from Theorem~\ref{th: main} up to constant may also be found in \cite[Section~5]{DKMZ25}, though a slightly more careful analysis than the one presented there is required to make the $\log(d/\lambda)$ appear explicitly.

\paragraph{Plan of the paper.} The paper is structured as follows. In Section~\ref{s: outline}, we provide an outline of the proof of \Cref{th: main}. 
In Section~\ref{s: pre}, we set our notation, collect several useful lemmas and present some definitions for almost regular and pseudorandom graphs. 
%Section~\ref{s: generalisation of FHMV} is dedicated to strengthening a result on random induced subgraphs of pseudorandom graphs. 
Section~\ref{s: proof-main} is devoted to the proof of \Cref{th: main}. Finally, Section~\ref{s: conclusion} contains some concluding remarks and discusses possible extensions and directions for future research.

\section{Proof outline}\label{s: outline}
We present an outline of the proof of \Cref{th: main}. 
In the first part of the argument, we prove the theorem for the range $d^6\le n$. 
The range $d=o(n)$ is then handled by a reduction to the previous case via site percolation,
%bootstrapping argument using induced subgraph, 
and a separate deterministic argument for when $d=\Theta(n)$ and $\lambda$ is small completes the proof.

\subsection*{The sparse case}
We begin with the case $d^6\le n$. Fix constants
\[
0<c_0\ll c_1\ll \beta \ll \eps \ll 1,
\]
and write
\[
k=c_0\log(d/\lambda).
\]
Our goal is to construct $k$ vertex-disjoint induced path, each of length $\Omega(n/d)$, and to stitch them together into one induced cycle.

To this end, set $n_i=\e^{-(1+\eps)i}n$ and $ d_i=\e^{-(1+\eps)i}d$ for all $i\in\{0,\ldots ,k\}$. Starting from $G(0)=G$, we construct a nested sequence of induced subgraphs $G=G(0)\supseteq G(1)\supseteq\cdots\supseteq G(k)$ such that each $G(i)$ is a nearly-$d_{i}$-regular graph on $n_{i}$ vertices. 
We construct these graphs as follows. 
Given the graph $G(i-1)$ with the said properties,
%has already been constructed and is nearly-$d_{i-1}$-regular on $n_{i-1}$ vertices. We 
we choose a random set $Y_i\subseteq V(G(i-1))$ by retaining each vertex independently and with probability $p_i = (1+\eps)/d_{i-1}$.
We then delete the set $Y_i$ together with its neighbourhood from $G(i-1)$, and perform a clean-up procedure in the remaining graph to find an induced nearly-$d_i$-regular graph on $n_i$ vertices. 
Observe that, following the Poisson paradigm, deleting $Y_i$ and its neighbourhood expected to create the `correct' drop of the order/degree: the probability that a vertex survives after this procedure is about $\mathbb{P}(\text{Poisson}(d_{i-1}p_i)=0)\approx \e^{-(1+\eps)}$.

We show that typically the promised cleaning procedure can be done at all stages in \Cref{s: regular}.
%, we prove that this cleaning procedure can be done at all stages.  
First, Lemma~\ref{l: estimate on N[Yi]} shows that $V(G(i-1))\setminus \left(Y_i\cup N_{G(i-1)}(Y_i)\right)$ has roughly the expected size; we note that the hypothesis that $d$ is small is used here. 
Then, Lemma~\ref{l: good regular subgraph} shows that any large enough set in $G(i-1)$ contains a large nearly regular induced subgraph. Together, these give Lemma~\ref{l: validity}: with high probability, for all $i\in [0,k]$, the graphs $G(i)$ exist, have an order roughly $n_i$, are nearly-$d_i$-regular, and inherit the required expander-mixing property from the original graph. Indeed, starting with $V(G)$ (of size $n$), we iteratively remove the sets $Y_i\cup N_{G(i-1)}(Y_i)$, leaving approximately an $\e^{-(1+\eps)}$-fraction of the vertices. The expander-mixing property allows us to control the edge distribution of sets of size up to $O(\lambda n/d)$. Consequently, Lemmas~\ref{l: estimate on N[Yi]} and \ref{l: good regular subgraph} may be applied iteratively $\Theta(\log(d/\lambda))$ times. 

We show that typically, in each $G(i-1)[Y_i]$, we can find an induced path $P_i$ of length $\Omega(n/d)$.
%First, observe that deleting $Y_i$ together with its neighbourhood creates the `correct' density drop: the expected proportion of vertices neither in $Y_i$ nor in its neighbourhood is roughly $\e^{-(1+\eps)}$, matching the definitions of $n_i$ and $d_i$. Furthermore, 
Note that, by construction, 
%this deletion separates paths in different iterations: 
all future subgraphs $G(j)$ with $j\ge i$, and hence all future paths $P_{j+1}$, lie outside $Y_i\cup N_{G(i-1)}(Y_i)$, and thus vertices of $P_i$ have no neighbours in them.
We produce the said paths by analysing a DFS exploration on the graphs $G(i-1)[Y_i]$ similar to the one of \cite{DKMZ25}.
To this end, we prove that each random set $Y_i$ has two key properties. 
%needed to apply a DFS argument. These properties are proved conditionally on the history up to time $i-1$: once $G(i-1)$ is fixed and is good, the set $Y_i$ is a fresh random subset of it. 
%\lyuc{I commented out the conditional part here, as it looked a bit technical to me, but we can also have it back if you prefer.}
The first property is \emph{good vertex-expansion}: in Lemma~\ref{l: expansion}, we show that typically every set of size about $n_{i-1}/d_{i-1}$ expands in $G(i-1)$ almost as well as in a random $d_{i-1}$-regular graph. 
The second property is \emph{small excess}: namely, in \Cref{l: excess}, we show that we may typically delete a relatively small number of edges from $G(i-1)[Y_i]$ to form a forest. % (where $\ex(H)=|E(H)|-|V(H)|+\kappa(H)$ with $\kappa(H)$ being the number of connected components in the graph $H$). \lyuc{Can we just say that $O(\eps^3n_{i-1}/d_{i-1})$ edges can be deleted to make the graph $G(i-1)[Y_i]$ a forest?}
%In \Cref{l: excess}, we show that $\ex(G(i-1)[Y_i])=O(\eps^3n_{i-1}/d_{i-1})$ for every $i$. 
These two properties allow us to successfully run a modified DFS procedure described in \Cref{s: DFS}, and obtain an induced path $P_i\subseteq G(i-1)[Y_i]$ of length $\Theta(n_i/d_i)=\Theta(n/d)$.

It remains to stitch the paths. For each path $P_i$, call $P_i^-$ and $P_i^+$ its initial and terminal thirds, and let $W_i^-$ and $W_i^+$ be the sets of vertices outside $P_i$ which have exactly one neighbour in $P_i$, and that unique neighbour lies in $P_i^-$ or $P_i^+$, respectively. 
In \Cref{lem:many-leaves}, we show that both $W_i^-$ and $W_i^+$ are large. This follows from the almost optimal expansion of $P_i$: since the path has about $d_{i-1}|V(P_i)|$ 
%external 
neighbours, almost each one of them has a unique neighbour in $P_i$. 
%Then, we use the randomness of the construction once again. 
Note that a vertex in $W_i^\pm$ is a good candidate for connecting to another path, but it could create a chord if it has a neighbour in the random sets $Y_{i+1}, \ldots, Y_k$. To resolve this danger, Lemma~\ref{lem:restrict} uses the randomness of the construction to extract large subsets $Z_i^\pm\subseteq W_i^\pm$ such that no vertex of $Z_i^-\cup Z_i+$ has a neighbour in $Y_{i+1}\cup\cdots\cup Y_k$. 
The paths $P_i,P_{i+1}$ (indices seen modulo $k$) are patched via edges $e_i\in E_G(Z_i^+,Z_{i+1}^-)$
%paths of length 3 through $Z_i^+$ and $Z_{i+1}^-$ (which is non-adjacent to $Y_1,\ldots,Y_i$ thanks to the nested construction) 
found using the expander-mixing property: note that the endpoints of $e_i$ send edges neither to $Y_1,\ldots,Y_{i-1}$ (thanks to the nested construction) nor to $Y_{i+2},\ldots,Y_k$ (by the properties of $Z_i^\pm$). 
The patching edges $(e_i)_{i=1}^k$ are chosen to form an induced matching by minor alterations of $Z_i^+,Z_{i+1}^-$ at every stage.
This allows us to form an induced cycle of the desired length in the case $d^6\le n$.

%Finally, using the expander-mixing property, we greedily choose edges $e_i\in E_G(Z_i^+,Z_{i+1}^-)$ (with indices modulo $k$), so that the chosen edges from an induced matching. This allows us to stitch together a large portion of each path, forming an induced cycle of the desired length. This concludes the case of $d^6\le n$.

\subsection*{Bootstrapping to the full range of degrees}
The above argument proves the theorem for the sparse regime $d^6\le n$. To pass from this regime to large values of $d$, we consider \textit{random induced subgraphs} of the original graph, while preserving the pseudorandom structure of the original graph. 
To this end, in \Cref{thm:patched}, we prove a quantitative strengthening of a theorem of Ferber, Han, Mao, and Vershynin \cite[Theorem 6.2]{FHMV25} on random induced subgraphs of pseudorandom graphs. 
Roughly speaking, the theorem says that if $G$ is an $(n,d,\lambda)$-graph and $X\subseteq V(G)$ is a uniformly random subset of size $\sigma n$, then under suitable hypothesis on $n,d$ and $\lambda$, $G[X]$ is typically again pseudorandom, this time on $\sigma n$ vertices, with degrees about $\sigma d$, and second largest (in absolute value) eigenvalue about $\sigma \lambda$. 
Our sharpening relaxes the assumptions on $d$ and $\lambda$, thus allowing to iterate the result efficiently for denser (almost-)regular pseudorandom graphs with smaller spectral gap.
%The point of proving this extension is that the iteration described below requires a sufficiently efficient random-subgraph theorem for (almost-)regular pseudorandom graphs, with required hypothesis weak enough to survive repeated sampling. \lyuc{I think we should underline here that their theorem does not allow us to treat the sparse regime.}
The proof of \Cref{thm:patched} follows by improving the relevant operator-norm estimates for random submatrices.

We then use \Cref{thm:patched} to bootstrap on our result for $d^6\le n$ as follows. 
Suppose the main theorem has already been proven for all graphs whose degree is at most $n^s$ for some $s<1$. 
Fix a pseudorandom graph $G$ with somewhat larger degree $d$ on $n$ vertices. 
By choosing a random subset $X$ with density $\sigma$ so that the new degree $\sigma d$ is at most $(\sigma n)^s$, Theorem~\ref{thm:patched} allows us to deduce that the induced subgraph $G[X]$ is typically also an (almost-)regular pseudo-random graph, where a suitably long induced cycle exists. 
%This gives an induced cycle of length $\Omega\left(\frac{\sigma n\log(\sigma d/\sigma \lambda)}{\sigma d}\right)=\Omega(n\log(d/\lambda)/d)$, and this induced cycle is naturally also induced in the original graph. 
%\lyuc{Perhaps we can remove the previous sentence and stay less quantitative with the blue addition to the previous one?}
This one-step bootstrapping is formalised in Theorem~\ref{lem: iteration_1} and further iterated in Corollary~\ref{lem: concluding iterations}, thus gradually extending the range of possible degrees all the way to $d=o(n)$ when $\lambda$ is relatively large. 
%This is an example of one iteration of the bootstrapping step, which is formalised in \Cref{lem: iteration_1}. 
%In Corollary~\ref{lem: concluding iterations}, we show that it can be iterated several times, gradually extending the range of possible degrees, all the way to $d=o(n)$.
%\lyuc{Hmm, do we do it this way? The big lemma takes care also of smaller $d$ with also small $\lambda$. This is perhaps too technical to describe here, but perhaps there is a way to suggest that we are lying a tiny bit with this division.}

The final remaining case is when $d=\Theta(n)$ and $\lambda$ is small, which is handled separately in Lemma~\ref{lem: one by one}. 
There, starting from a vertex $v_0$, we build an induced path vertex by vertex via a deterministic algorithm which maintains simultaneously a large set of available extensions, and a large set of possible closing vertices near $v_0$ --- which ultimately allows us to close this path into an induced cycle. 
The key for ensuring the said property for the required number of steps is the expander-mixing property, which is especially powerful for dense graphs with good spectral expansion.
%The expander-mixing property shows that at each step there is a good next vertex, and eventually one can close the path into an induced cycle.  
Combining this with Corollary~\ref{lem: concluding iterations} and one final application of Theorem~\ref{thm:patched} proves
Theorem~\ref{th: main} for all $d\le \alpha n$.

\section{Notation and preliminaries}\label{s: pre}
First, we set up some notation. For an integer $m\ge1$, we denote $[m]=\{1,\ldots,m\}$. Given $x,y,z\in\mathbb R_{\ge 0}$, we write $z=x\pm y$ to say that $z\in[x-y,x+y]$.
We use usual hierarchy notation: for positive real numbers $x_1,\ldots, x_k$, we write $x_1\ll x_2\ll\cdots\ll x_k$ when, for every $i\in \{2,\ldots ,k\}$, $x_{i-1}$ has to be chosen sufficiently small as a function of $x_i, x_{i+1}\ldots, x_k$. 
We also use standard asymptotic notations.

For a graph $G$, we write $V(G)$ for the vertex set of $G$ and $E(G)$ for the edge set of $G$. 
%We call $|V(G)|$ the \emph{order} of $G$, and $|E(G)|$ the \emph{size} of $G$. %(Sahar): we never actually use size to mean number of edges.
Given sets $A,B\subseteq V(G)$, we denote by $e_G(A,B)$ the number of edges with one endpoint in $A$ and the other endpoint in $B$; here, edges with two endpoints in $A\cap B$ counted twice. 
We also abbreviate $e_G(A)\coloneqq e_G(A,A)/2=|E(G)|$. We denote by $N_G(A)$ the \emph{open neighbourhood} of $A$ in $G$,~that~is,
\[
N_G(A)=\{v\in V\setminus A:\text{ there exists }u\in A\text{ with }uv\in E(G)\},
\]
we write $N_G[A]=N_G(A)\cup A$ for the \emph{closed neighbourhood} of $A$ in $G$. 
For a vertex $v\in V$, we write $\deg_G(v)$ for the number of neighbours of $v$ in $G$. %\lyuc{Commented out the definition of $\deg(v,A)$ since it is equal to $e(v,A)$.}
%and a set $A\subseteq V$, we denote by $\deg_G(v,A)$ the number of neighbours of $v$ in $A$. When $A=V(G)$, 
When the underlying graph is clear from the context, we often omit the subscript $G$. 
Throughout the paper, we ignore rounding signs as long as
this does not affect the validity of our arguments.

\subsection{Concentration inequalities}

We start by stating a version of the Chernoff inequality for the hypergeometric distribution, see \cite[Theorem~2.10]{JLR00}.

\begin{lemma}\label{lem:chernoff_hyp}
Fix integers $m,n,N$ and define a random subset $S\subseteq [N]$ distributed uniformly over all subsets of size $n$.
Denote by $X$ the size of $S\cap [m]$.
Then,
\begin{align*}
\text{for all $t\ge 0$,}\qquad \max\{\mathbb{P}(X-\mathbb E X \ge t),\mathbb{P}(X - \mathbb E X \le -t)\} &\le \exp\left(-\frac{t^2}{2\mathbb E X + 2t/3}\right).
\end{align*}
\end{lemma}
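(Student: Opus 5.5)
The statement is the hypergeometric Chernoff bound (\cite[Theorem~2.10]{JLR00}). I will derive it from the binomial Chernoff bound by Hoeffding's convex-order comparison.

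\textbf{Setup.} Write $X=\sum_{j=1}^n \xi_j$, where $\xi_j$ indicates that the $j$-th draw without replacement from $[N]$ lands in $[m]$. Let $Y\sim\mathrm{Bin}(n,m/N)$ be the analogous count for draws with replacement, so that $\mathbb E X=\mathbb E Y=\mu\coloneqq nm/N$.

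\textbf{Step 1: comparison of exponential moments.} Hoeffding's classical lemma gives $\mathbb E f(X)\le \mathbb E f(Y)$ for every continuous convex $f$. The argument writes a sample without replacement as a conditional expectation of a sample with replacement. Concretely, draw with replacement and condition on the multiset of distinct labels that appear. Alternatively, for the exponential case one can couple directly and apply Jensen's inequality. Taking $f(x)=\e^{\theta x}$ for $\theta\in\mathbb R$ yields $\mathbb E\e^{\theta X}\le \mathbb E \e^{\theta Y}=(1+p(\e^\theta-1))^n\le \exp(\mu(\e^\theta-1))$, where $p=m/N$.

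\textbf{Step 2: Markov's inequality and optimisation in $\theta$.}

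For the upper tail, take $\theta>0$. Then $\mathbb P(X\ge \mu+t)\le \exp(\mu(\e^\theta-1)-\theta(\mu+t))$. Choosing $\theta=\log(1+t/\mu)$ gives the bound $\exp(-\mu\varphi(t/\mu))$, where $\varphi(x)=(1+x)\log(1+x)-x$.

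For the lower tail, take $\theta<0$ and set $\theta=\log(1-t/\mu)$; if $t>\mu$ the probability is zero. This gives $\exp(-\mu\varphi(-t/\mu))$.

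\textbf{Step 3: elementary inequalities.} It remains to check two bounds on $\varphi$:
\begin{itemize}
\item $\varphi(x)\ge x^2/(2(1+x/3))$ for $x\ge0$;
\item $\varphi(-x)\ge x^2/2$ for $x\in[0,1]$.
\end{itemize}
Both follow by comparing derivatives at $0$ and checking monotonicity of the difference. For the first, define $g(x)=\varphi(x)-x^2/(2+2x/3)$; then $g(0)=g'(0)=0$ and $g''\ge0$. Multiplying out, the first inequality gives $\mu\varphi(t/\mu)\ge t^2/(2\mu+2t/3)$, and the second gives the even stronger $t^2/(2\mu)$ for the lower tail.

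\textbf{Main obstacle.} The only non-routine step is Step~1, Hoeffding's convex-order comparison. The remaining steps are the standard Bernstein-type computation.
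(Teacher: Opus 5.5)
Your proof is correct, and it is essentially the standard proof behind the reference the paper cites. The paper gives no argument of its own, only \cite[Theorem~2.10]{JLR00}: bound the moment generating function of the hypergeometric variable by that of $\mathrm{Bin}(n,m/N)$ via Hoeffding's convex-order theorem, then run the binomial Chernoff computation with $\varphi(x)=(1+x)\log(1+x)-x$, $\varphi(x)\ge x^2/(2(1+x/3))$ and $\varphi(-x)\ge x^2/2$. One caveat: your parenthetical sketch of Hoeffding's lemma is loose, since conditioning on the set of distinct labels only yields a without-replacement sample of random size $K\le n$, so an extra symmetrisation or monotonicity-in-sample-size step would be needed; because you invoke the lemma as a known result, this does not affect the argument.
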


We continue with the classic Bernstein's inequality, see \cite[Theorem 2.9.5]{Ver18}.

\begin{lemma}\label{lem:Bernstein}
Consider independent real-valued random variables $X_1, \dots, X_n$ such that, for all $i \in [n]$, such that $|X_i-\mathbb E X_i| \le M$ for some constant $M>0$. 
Write $X = \sum_{i=1}^n X_i$ and $\sigma^2 = \sum_{i=1}^n \mathrm{Var}[X_i]$. Then,
\begin{equation}
    \text{for all $t\ge 0$,}\qquad \max\{\mathbb{P}\left( X-\mathbb E X \ge t \right),\mathbb{P}\left( X-\mathbb E X \le -t \right)\} \le \exp \left( -\frac{t^2}{2\sigma^2 + 2Mt/3} \right).
\end{equation}
\end{lemma}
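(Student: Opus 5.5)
This is Bernstein's inequality, and I would prove it by the standard Chernoff (exponential moment) method. It suffices to treat the upper tail: the lower tail follows by applying the same bound to $-X_1,\dots,-X_n$, which satisfy the same hypotheses with the same $M$ and $\sigma^2$. Write $Y_i = X_i-\mathbb E X_i$. These are independent, have mean zero, satisfy $|Y_i|\le M$, and have $\sum_i \mathbb E Y_i^2=\sigma^2$.

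The key step is a bound on each moment generating function. For $0<\theta<3/M$, expand $\e^{\theta Y_i}$ as a power series and use $\mathbb E|Y_i|^k\le M^{k-2}\mathbb E Y_i^2$ for $k\ge2$. Together with $k!\ge 2\cdot 3^{k-2}$, this gives
\[
\mathbb E \e^{\theta Y_i}\le 1+\frac{\theta^2\mathbb E Y_i^2}{2}\sum_{k\ge2}\Bigl(\frac{\theta M}{3}\Bigr)^{k-2}=1+\frac{\theta^2\mathbb E Y_i^2}{2(1-\theta M/3)}\le \exp\Bigl(\frac{\theta^2\mathbb E Y_i^2}{2(1-\theta M/3)}\Bigr).
\]
By independence and Markov's inequality,
\[
\mathbb P(X-\mathbb E X\ge t)\le \exp\Bigl(-\theta t+\frac{\theta^2\sigma^2}{2(1-\theta M/3)}\Bigr).
\]

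The remaining step is the choice of $\theta$. I would take $\theta = t/(\sigma^2+Mt/3)$, which lies in $(0,3/M)$. With this choice $1-\theta M/3=\sigma^2/(\sigma^2+Mt/3)$, so the exponent equals
\[
-\frac{t^2}{\sigma^2+Mt/3}+\frac{t^2}{2(\sigma^2+Mt/3)}=-\frac{t^2}{2\sigma^2+2Mt/3},
\]
exactly as claimed. The case $t=0$ is trivial. If $\sigma^2=0$, then every $Y_i$ vanishes almost surely and the bound is trivial as well.

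The only delicate point is getting the constant $2/3$. This comes from the bound $k!\ge 2\cdot3^{k-2}$, which holds for all $k\ge 2$ and is easily checked by induction.
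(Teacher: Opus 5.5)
Your proof is correct. It is the standard Chernoff (exponential-moment) proof of Bernstein's inequality: the moment generating function bound $\mathbb E\,\e^{\theta Y_i}\le\exp\bigl(\theta^2\mathbb E Y_i^2/(2(1-\theta M/3))\bigr)$, the optimising choice $\theta=t/(\sigma^2+Mt/3)$, and a separate treatment of the degenerate case $\sigma^2=0$. The paper gives no proof of its own and simply cites Vershynin's textbook, where the result is derived by essentially this same moment generating function argument.
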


We also make use of a theorem of Hoeffding comparing the tails of sums of random subsets of $\{x_1,\ldots,x_n\}\subseteq \mathbb R$ obtained by subsampling with and without replacement, see~\cite[Section~6]{Hoe63}. 

\begin{lemma}\label{lem:Hoeffding}
Fix real numbers $x_1,\ldots,x_n$ in an interval $[a,b]$, and define $I$ to be a uniformly random subset of $[n]$ of size $m$. 
Denote by $X$ the sum of $(x_i)_{i\in I}$, and denote by $Y$ the sum of $m$ independent uniform random variables on $\{x_1,\ldots,x_n\}$. Then,
\[\text{for all $t\ge 0$,}\qquad \mathbb P(X-\mathbb E X\ge t)\le \mathbb P(Y-\mathbb E Y\ge t)\qquad \text{and}\qquad \mathbb P(X-\mathbb E X\le -t)\le \mathbb P(Y-\mathbb E Y\le -t).\]
\end{lemma}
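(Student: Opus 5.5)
The plan is to follow Hoeffding's original route: first show that $X$ is dominated by $Y$ in the convex order, then read off tail comparisons. Before doing that, I would test the literal pointwise claim on the smallest case, and that test fails.

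Take $n=m=2$ with $x_1\neq x_2$. Then $X=x_1+x_2$ is deterministic, so $\mathbb P(X-\mathbb E X\ge 0)=1$. On the other hand $Y-\mathbb E Y\in\{\pm(x_1-x_2),0\}$, which gives $\mathbb P(Y-\mathbb E Y\ge 0)=3/4$. So the two displayed inequalities cannot hold for every $t\ge0$. Hoeffding's Section~6 does not claim them; it proves $\mathbb E f(X)\le \mathbb E f(Y)$ for every continuous convex $f$. I would therefore prove that statement and record the consequence that is actually usable: every tail bound for $Y$ obtained from exponential moments also holds for $X$. Chernoff and Bernstein bounds (\Cref{lem:Bernstein}) are of this type, so they transfer.

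\textbf{Step 1 (convex order).} Let $J_1,\dots,J_m$ be i.i.d.\ uniform indices in $[n]$, so that $Y=\sum_{\ell} x_{J_\ell}$. I would construct a coupling in which $X=\mathbb E[\tilde Y\mid \mathcal G]$, where $\tilde Y$ has the law of $Y$ and $\mathcal G$ is a suitable $\sigma$-algebra. Following Hoeffding, write $\mathbb E f(Y)$ as an average over sequences $(J_\ell)$. Each sequence with $r$ distinct values can be rewritten as a fixed linear combination of sequences of $m$ distinct values; equivalently, $\mathbb E f(Y)=\mathbb E\,\phi(I)$ where, conditionally on the uniform $m$-set $I$, $\phi(I)$ is an average of $f$ at points whose mean is $\sum_{i\in I}x_i$. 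Jensen's inequality then gives $\phi(I)\ge f(X)$, and taking expectations yields $\mathbb E f(X)\le \mathbb E f(Y)$.

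\textbf{Step 2 (tails).} Apply Step 1 to $f(z)=\e^{s(z-\mathbb E X)}$ for $s\ge0$; note $\mathbb E X=\mathbb E Y$. Markov's inequality then gives
\[
\mathbb P(X-\mathbb E X\ge t)\le \inf_{s\ge0}\e^{-st}\,\mathbb E\, \e^{s(Y-\mathbb E Y)}.
\]
The right-hand side is exactly the quantity that the Chernoff--Bernstein bounds control. Taking $s\le0$ handles the lower tail in the same way.

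The main obstacle is Step 1, namely exhibiting the averaging representation cleanly. The coupling in which $I$ is the set of first $m$ distinct values seen in an infinite i.i.d.\ sequence is the first thing I would try. If that does not produce the conditional-expectation identity, I would fall back on the direct combinatorial identity over sequences with repeated indices. The pointwise inequality as literally stated cannot be established, by the example above. In later applications I would invoke only the exponential-moment form from Step 2.
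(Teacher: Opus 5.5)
Your counterexample is correct: with $n=m=2$ and $x_1\neq x_2$ one has $\mathbb P(X-\mathbb E X\ge 0)=1>3/4=\mathbb P(Y-\mathbb E Y\ge 0)$, and the lower tail fails in the same way. So the lemma as printed is false.

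The failure is not confined to $t=0$. Take $n=3$, $m=2$, $(x_1,x_2,x_3)=(0,0,1)$; a small perturbation makes the entries distinct. Then $\mathbb P(X-\mathbb E X\ge t)=2/3$, while $\mathbb P(Y-\mathbb E Y\ge t)=5/9$, for every $t\in(0,1/3]$.

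The paper gives no proof, only the citation to Hoeffding's Section~6. That section proves exactly the convex-order statement $\mathbb E f(X)\le \mathbb E f(Y)$ and then transfers exponential-moment bounds. Your Steps 1 and 2 are therefore the argument of the source the paper relies on.

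Your corrected version also covers the paper's only use of the lemma, in Lemma~\ref{lem:selected-column}, since Bernstein's inequality is proved through $\mathbb E\,\e^{s(Y-\mathbb E Y)}$. One caveat appears when you transfer it. The variance that enters is the with-replacement one, $m\,\mathrm{Var}(A_{J,j}^2)\le \frac{m}{n}\|A\|_\infty^2\|A_{*,j}\|_2^2$, not the without-replacement $\sigma^2$ displayed there. This is still below the $\frac{8m}{n}\|A\|_\infty^2\|A\|_{1\to 2}^2$ used afterwards, so the paper's subsequent estimate survives.

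The step you left hedged goes through with your first coupling; no combinatorial fallback is needed.
\begin{itemize}
\item Let $J_1,J_2,\dots$ be i.i.d.\ uniform on $[n]$, and let $I$ be the set of the first $m$ distinct values.
\item Every $J_\ell$ with $\ell\le m$ lies in $I$.
\item The law of the sequence is invariant under relabellings $\pi$ of $[n]$, and $I$ transforms equivariantly. Hence $I$ is a uniform $m$-set.
\item For $\pi(S)=S$, the event $\{I=S\}$ is preserved. So, conditionally on $I=S$, the law of $(J_1,\dots,J_m)$ is invariant under all permutations of $S$. It follows that each $J_\ell$ is conditionally uniform on $S$.
\item Therefore $\mathbb E[Y\mid I]=\sum_{i\in I}x_i$, which has the law of $X$.
\item Conditional Jensen gives $\mathbb E f(Y)\ge \mathbb E f(\mathbb E[Y\mid I])=\mathbb E f(X)$ for every convex $f$.
\end{itemize}
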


For a constant $C>0$ and a domain $\Lambda=\Lambda_1\times\cdots\times\Lambda_m\subseteq\mathbb R^m$, a function $f:\Lambda\to\mathbb R$ is said to be \emph{$C$-Lipschitz} if changing one coordinate of its input changes its value by at most $C$.
We will use the following bounded difference inequality; see, for example, \cite[Theorem 3.9]{M98} and \cite[Corollary 6]{War16}.
\begin{lemma}\label{azuma}
Consider $p\in [0,1]$, a $C$-Lipschitz function $f:\{0,1\}^m\to\mathbb R$ and a vector $X=(X_1,\ldots,X_m)$ with independent Bernoulli$(p)$ entries. Then,
\[
\text{for every $t\ge0$,}\qquad \mathbb P\big(|f(X)-\mathbb E f(X)|\ge t\big)
\le 2\exp\left(-\frac{t^2}{2C^2mp+2Ct/3}\right).
\]
\end{lemma}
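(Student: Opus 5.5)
The plan is to prove the inequality by the martingale (Doob) method combined with a Bernstein-type exponential bound, which produces the variance proxy $C^2mp$ rather than the weaker $C^2m$ from plain Azuma. Let $\mathcal F_j=\sigma(X_1,\ldots,X_j)$ and $M_j=\mathbb E[f(X)\mid \mathcal F_j]$, so that $M_0=\mathbb E f(X)$ and $M_m=f(X)$. Write $D_j=M_j-M_{j-1}$. It suffices to bound $\mathbb P(M_m-M_0\ge t)$ by $\exp\bigl(-t^2/(2C^2mp+2Ct/3)\bigr)$. The lower tail then follows by applying the same bound to $-f$, which is also $C$-Lipschitz, and a union bound gives the factor $2$.

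The key structural step is an explicit form of the increments. Since the coordinates are independent, for $b\in\{0,1\}$ define
\[
g_j(b)=\mathbb E\bigl[f(X_1,\ldots,X_{j-1},b,X_{j+1},\ldots,X_m)\,\big|\,\mathcal F_{j-1}\bigr],
\]
where the expectation is over $X_{j+1},\ldots,X_m$ only. Then $M_j=g_j(X_j)$ and $M_{j-1}=p\,g_j(1)+(1-p)g_j(0)$. Hence
\[
D_j=(X_j-p)\bigl(g_j(1)-g_j(0)\bigr).
\]
By the Lipschitz property, $|g_j(1)-g_j(0)|\le C$. This gives two facts:
\begin{itemize}
\item the increments are bounded, $|D_j|\le C\max(p,1-p)\le C$;
\item the conditional variance satisfies $\mathbb E[D_j^2\mid\mathcal F_{j-1}]=p(1-p)(g_j(1)-g_j(0))^2\le C^2p$.
\end{itemize}
So the total predictable variance is deterministically at most $\sigma^2:=C^2mp$.

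Next comes the exponential moment step, as in the proof of Bernstein's inequality (Lemma~\ref{lem:Bernstein}) or Freedman's inequality. For $x\le C$ and $\theta>0$ we have
\[
\e^{\theta x}\le 1+\theta x+\frac{x^2}{C^2}\bigl(\e^{\theta C}-1-\theta C\bigr),
\]
because $(\e^{u}-1-u)/u^2$ is increasing in $u$. Taking conditional expectations and using $\mathbb E[D_j\mid\mathcal F_{j-1}]=0$ gives
\[
\mathbb E[\e^{\theta D_j}\mid\mathcal F_{j-1}]\le\exp\Bigl(C^2p\cdot\frac{\e^{\theta C}-1-\theta C}{C^2}\Bigr).
\]
Iterating by the tower property yields
\[
\mathbb E\,\e^{\theta(M_m-M_0)}\le \exp\Bigl(\frac{\sigma^2}{C^2}\bigl(\e^{\theta C}-1-\theta C\bigr)\Bigr).
\]
Markov's inequality with the choice $\theta=C^{-1}\log(1+Ct/\sigma^2)$ then gives
\[
\mathbb P(M_m-M_0\ge t)\le\exp\Bigl(-\frac{\sigma^2}{C^2}\,h\Bigl(\frac{Ct}{\sigma^2}\Bigr)\Bigr),\qquad h(u)=(1+u)\log(1+u)-u.
\]
The elementary inequality $h(u)\ge u^2/(2(1+u/3))$ turns this into the claimed bound $\exp\bigl(-t^2/(2\sigma^2+2Ct/3)\bigr)$.

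The only step needing real care is the increment representation $D_j=(X_j-p)(g_j(1)-g_j(0))$, since it is what yields the factor $p$ in the variance. Everything after that is the standard Bernstein computation. The variance bound holds deterministically rather than only on an event, so no stopping-time version of Freedman's inequality is needed.
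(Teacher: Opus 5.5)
Your argument is correct. The paper gives no proof of this lemma and simply cites McDiarmid's martingale Bernstein-type inequality and Warnke; your derivation is the standard argument behind those references: the Doob martingale, the increment identity $D_j=(X_j-p)(g_j(1)-g_j(0))$ giving conditional variance at most $C^2p$, and then the Bennett/Bernstein exponential-moment bound. One small omission is the degenerate case $\sigma^2=0$ (i.e.\ $p=0$ or $C=0$), where your choice of $\theta$ is undefined; there $f(X)$ is a.s.\ constant, so the bound holds trivially.
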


\subsection{Subtrees in graphs and random trees}
Next, we state an estimate on the number of subtrees of a graph rooted in a particular vertex; this bound appears, for example, as \cite[Lemma 2]{BFM98}.

\begin{lemma}\label{l: trees}
Consider a graph $G$ with maximum degree $\Delta$, a vertex $v\in V(G)$ and an integer $r\ge1$. Then, the number $t(v,r)$ of $r$-vertex subtrees of $G$ rooted at $v$ satisfies
\[
    t(v,r)\le \frac{r^{r-2}\Delta^{r-1}}{(r-1)!}.
\]
\end{lemma}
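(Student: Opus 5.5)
The bound on rooted subtrees will follow from an injective encoding of trees, combined with Cayley-type counting. Fix $v$ and an $r$-vertex subtree $T$ of $G$ containing $v$. I will encode $T$ by an abstract labelled tree on $[r]$ together with a choice of graph vertices that embeds it. Concretely, order the vertices of $T$ by a breadth-first search from $v$, always scanning neighbours of $G$ in a fixed order. This gives a bijection $V(T)\to[r]$ sending $v$ to $1$, and turns $T$ into a tree $\tau$ on $[r]$ rooted at $1$. The embedding is then recovered by walking through $\tau$ from the root: each non-root vertex of $\tau$ is mapped to one of at most $\Delta$ neighbours of its parent's image.

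A crude count of pairs (labelled tree on $[r]$, embedding) gives $r^{r-2}\Delta^{r-1}$. This overcounts, because each $T$ arises from many labellings. To recover the factor $1/(r-1)!$, I will instead count pairs $(T,\pi)$, where $T$ is a subtree rooted at $v$ and $\pi$ is a bijection from $V(T)\setminus\{v\}$ to $\{2,\dots,r\}$. There are exactly $(r-1)!\,t(v,r)$ such pairs.

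Each pair $(T,\pi)$ determines a labelled tree $\tau$ on $[r]$ with $1$ the image of $v$; there are $r^{r-2}$ choices of $\tau$ by Cayley's formula. The pair also determines a map $\phi:[r]\to V(G)$ with $\phi(1)=v$ that sends $\tau$-edges to $G$-edges. Conversely, $(\tau,\phi)$ determines $(T,\pi)$, so this assignment is injective. For fixed $\tau$, I will count the maps $\phi$ by processing the vertices of $\tau$ in breadth-first order from $1$. Each non-root vertex has its parent already mapped, so it has at most $\Delta$ choices. Hence there are at most $\Delta^{r-1}$ maps $\phi$ for each $\tau$.

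Combining the two counts gives $(r-1)!\,t(v,r)\le r^{r-2}\Delta^{r-1}$, which is the claimed bound. The only delicate step is checking that $(T,\pi)\mapsto(\tau,\phi)$ is injective. This holds because $\phi$ is injective on $[r]$, since the vertices of $T$ are distinct, and $\phi([r])=V(T)$; so $T$ is recovered as $\phi(\tau)$ and $\pi$ as $\phi^{-1}$ restricted to $V(T)\setminus\{v\}$.
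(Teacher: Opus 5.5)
Your proof is correct. The paper does not prove this lemma itself but quotes it from \cite[Lemma~2]{BFM98}, and your argument is the standard one behind exactly this form of the bound: you inject the $(r-1)!\,t(v,r)$ pairs $(T,\pi)$ into pairs $(\tau,\phi)$, where $\tau$ is one of the $r^{r-2}$ labelled trees on $[r]$ and $\phi$ is one of at most $\Delta^{r-1}$ homomorphisms of $\tau$ into $G$ with $\phi(1)=v$. The canonical BFS labelling in your first paragraph is never used and can be dropped.
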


Finally, for $\alpha > 1$ we define $\bar{\alpha} < 1$ to be the unique solution (other than $\alpha$) of the equation $x\e^{-x}=\alpha \e^{-\alpha}$. It is known (see, for example, \cite{ErdosRenyi1960}) that 
\[
    \bar{\alpha}=\sum_{k=1}^\infty \frac{k^{k-1}}{k!}\left(\alpha\e^{-\alpha}\right)^k.
\]
Also, observe that 
\[
    \frac{n}{\bar{\alpha}}\sum_{k=1}^\infty \frac{(k-1)k^{k-2}}{k!} \left(\bar{\alpha}\e^{-\bar{\alpha}}\right)^{k}=\frac{\bar{\alpha}}{2}n,
\]
which can be seen from the fact that the LHS is asymptotically equal to the expected number of edges of $G(n,\bar{\alpha}/n)$ which lie on trees (see, for example, \cite{FKM04}). Together, these two facts imply that
\begin{equation}\label{eq: standard by now tool}
    \sum_{k=1}^\infty \frac{k^{k-2}}{k!} \left(\alpha\e^{-\alpha}\right)^{k}=\bar{\alpha}-\frac{\bar{\alpha}^2}{2}.
\end{equation}

\begin{comment}
Finally, given $\eps>0$, we denote by $y(\eps)$ the unique solution in $(0,1)$ to
\begin{equation}\label{eq: surv prob}
    y=1-\exp(-(1+\eps)y).
\end{equation}
We recall that it is the survival probability of a Galton-Watson tree with offspring distribution $\Po(1+\eps)$ (see, for example, \cite[Theorem~32.1]{FK15}), and satisfies $y(\eps)=2\eps-O(\eps^2)$.
\end{comment}

\subsection{Pseudorandom graphs}

The main tool in the analysis of $(n,d,\lambda)$-graphs is the celebrated \textit{expander mixing lemma} due to Alon and Chung~\cite{AC88}.
\begin{lemma}\label{l: eml}
For every $(n,d,\lambda)$-graph $G$ and vertex sets $A,B\subseteq V(G)$, $|e(A,B)-(d/n)|A||B||\le \lambda\sqrt{|A||B|}$.
\end{lemma}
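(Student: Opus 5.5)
The plan is the standard spectral argument. Let $A$ be the adjacency matrix of $G$. Since $G$ is $d$-regular, $\mathbf 1$ is an eigenvector with eigenvalue $d$. Take an orthonormal eigenbasis $u_1=\mathbf 1/\sqrt n,u_2,\ldots,u_n$ of $\mathbb R^n$ with eigenvalues $\lambda_1=d$ and $|\lambda_i|\le\lambda$ for $i\ge2$. Let $\mathbf 1_A,\mathbf 1_B$ be the indicator vectors, so that
\[
e(A,B)=\mathbf 1_A^{T}A\,\mathbf 1_B .
\]
This convention matches the paper's, in which edges inside $A\cap B$ are counted twice. Expand $\mathbf 1_A=\sum_i a_iu_i$ and $\mathbf 1_B=\sum_i b_iu_i$. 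Then $a_1=|A|/\sqrt n$ and $b_1=|B|/\sqrt n$, so
\[
e(A,B)=\sum_i\lambda_ia_ib_i=\frac dn|A||B|+\sum_{i\ge2}\lambda_ia_ib_i .
\]

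It remains to bound the error term. By the triangle inequality and Cauchy--Schwarz,
\[
\Bigl|\sum_{i\ge2}\lambda_ia_ib_i\Bigr|\le\lambda\Bigl(\sum_{i\ge2}a_i^2\Bigr)^{1/2}\Bigl(\sum_{i\ge2}b_i^2\Bigr)^{1/2}.
\]
By Parseval, $\sum_{i\ge2}a_i^2=|A|-|A|^2/n\le|A|$, and similarly for $B$. This gives the claimed bound $\lambda\sqrt{|A||B|}$, and in fact the slightly stronger $\lambda\sqrt{|A|(1-|A|/n)|B|(1-|B|/n)}$.

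There is no real obstacle here. The only points to check are that $A$ is symmetric, so the orthonormal eigenbasis exists, and that regularity makes $\mathbf 1$ the top eigenvector. For the $(1\pm\gamma)d$-graphs used later, the same computation would need the top eigenvector instead of $\mathbf 1$. The lemma as stated concerns $(n,d,\lambda)$-graphs only, so that issue does not arise.
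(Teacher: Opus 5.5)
Your proof is correct and is the standard spectral proof of the expander mixing lemma. The paper does not prove this lemma itself but simply cites it from Alon and Chung, and your argument (including the identity $e(A,B)=\mathbf 1_A^{T}A\,\mathbf 1_B$, which matches the paper's double-counting convention on $A\cap B$, and the slightly sharper $\lambda\sqrt{|A|(1-|A|/n)|B|(1-|B|/n)}$ bound) is exactly the proof behind that citation.
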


We say that a graph on $n$ vertices is an \textit{$(n,(1\pm \gamma)d,\lambda)$-graph} if all the degrees of $G$ are between $(1-\gamma)d$ and $(1+\gamma)d$, and all of the eigenvalues of $G$ but the largest one have absolute value at most $\lambda$. The following claim asserts that $(n,(1\pm \gamma)d,\lambda)$-graphs satisfy the following generalisation of the expander mixing lemma.
\begin{lemma}[Corollary~4.3 of \cite{FHMV25}]\label{cor:EML-almost-regular}
For every $(n,(1\pm\gamma)d,\lambda)$-graph $G$ and sets $S,T\subseteq V(G)$, we have
    \begin{equation}    \label{eq: EML-almost-regular}
        \frac{(1-\gamma)^2d|S||T|}{(1+\gamma)n}-\frac{1+\gamma}{1-\gamma}\cdot \lambda \sqrt{|S||T|}
        \leq e(S,T)
        \leq \frac{(1+\gamma)^2d|S||T|}{(1-\gamma)n}+\frac{1+\gamma}{1-\gamma}\cdot \lambda \sqrt{|S||T|}.   
    \end{equation}
\end{lemma}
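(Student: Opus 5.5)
We need to prove the almost-regular expander mixing lemma, Lemma~\ref{cor:EML-almost-regular}. We use the spectral decomposition of the adjacency matrix $A$ of $G$, with a top eigenvector that is close to, but not exactly, the constant vector. Let $\mu_1\ge\cdots\ge\mu_n$ be the eigenvalues of $A$, with orthonormal eigenvectors $v_1,\ldots,v_n$. By Perron--Frobenius, $v_1$ may be taken to have nonnegative entries. By assumption, $|\mu_j|\le\lambda$ for $j\ge2$.

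For indicator vectors $1_S,1_T$ we write
\[
e(S,T)=1_S^\top A 1_T=\mu_1\langle 1_S,v_1\rangle\langle 1_T,v_1\rangle+\sum_{j\ge2}\mu_j\langle 1_S,v_j\rangle\langle 1_T,v_j\rangle .
\]
By Cauchy--Schwarz, the second sum is bounded in absolute value by $\lambda\|1_S\|\|1_T\|=\lambda\sqrt{|S||T|}$. So the task reduces to controlling the main term
\[
\mu_1\langle 1_S,v_1\rangle\langle 1_T,v_1\rangle=\mu_1\Big(\sum_{s\in S}v_1(s)\Big)\Big(\sum_{t\in T}v_1(t)\Big).
\]
We need two ingredients.
\begin{enumerate}
\item Since the row sums of $A$ lie in $[(1-\gamma)d,(1+\gamma)d]$, we get $(1-\gamma)d\le\mu_1\le(1+\gamma)d$. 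The lower bound comes from the Rayleigh quotient of the all-ones vector, and the upper bound from the maximum row sum.
\item The entries of $v_1$ are nearly uniform. Let $M=\max_u v_1(u)$ and $m=\min_u v_1(u)$. From $\mu_1 v_1(u)=\sum_{w\sim u}v_1(w)$ we only get $M\le (1+\gamma)dM/\mu_1$, which is trivial. So we instead compare with the constant vector in the spectral picture.
\end{enumerate}

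Write $\mathbf 1/\sqrt n=a v_1+w$ with $w\perp v_1$. Then
\[
\frac{1}{n}\mathbf 1^\top A\mathbf 1\le a^2\mu_1+\lambda\|w\|^2 ,
\]
and the left side is at least $(1-\gamma)d$. This gives $a^2\ge \big((1-\gamma)d-\lambda\big)/(\mu_1-\lambda)$, so $v_1$ is close to $\mathbf 1/\sqrt n$ in $\ell_2$. However, the $\ell_2$ error does not give the clean multiplicative constants $(1\pm\gamma)^2/(1\mp\gamma)$ in the statement. Those constants suggest a different normalisation: compare $A$ with the matrix $D^{1/2}$-scaled, or note that the eigenvector entries satisfy $m/M\ge(1-\gamma)/(1+\gamma)$.

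The most plausible route to the stated constants is via $v_1(u)^2$ bounds. Using $\mu_1 v_1(u)=\sum_{w\sim u}v_1(w)$ together with $A^2$, and $\mu_1\ge(1-\gamma)d$, we aim to show
\[
\sqrt{\frac{1-\gamma}{(1+\gamma)n}}\le v_1(u)\le\sqrt{\frac{1+\gamma}{(1-\gamma)n}}
\]
for every $u$, up to the $\lambda$-terms. Given these entrywise bounds, the main term is at most
\[
(1+\gamma)d\cdot|S||T|\cdot\frac{1+\gamma}{(1-\gamma)n}
\]
and at least the analogous lower expression, which yields the $d|S||T|/n$ terms in \eqref{eq: EML-almost-regular}.

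The factor $(1+\gamma)/(1-\gamma)$ multiplying $\lambda$ indicates that the authors actually work with a rescaled operator, such as $D^{-1/2}AD^{-1/2}$ or $A$ restricted to $\mathbf 1^\perp$ with a perturbation. I would therefore first try the normalised adjacency $\tilde A=D^{-1/2}AD^{-1/2}$, whose top eigenvector is exactly $D^{1/2}\mathbf 1/\sqrt{2e(G)}$. Applying the above decomposition to $\tilde A$ with the test vectors $D^{1/2}1_S$ and $D^{1/2}1_T$ gives exactly the form of the lemma. The top term is
\[
\frac{\mathrm{vol}(S)\,\mathrm{vol}(T)}{2e(G)}\in\left[\frac{(1-\gamma)^2d|S||T|}{(1+\gamma)n},\ \frac{(1+\gamma)^2d|S||T|}{(1-\gamma)n}\right],
\]
since each volume is between $(1-\gamma)d$ and $(1+\gamma)d$ times the set size, and $2e(G)$ is between $(1-\gamma)dn$ and $(1+\gamma)dn$. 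The error term is at most $\tilde\lambda\sqrt{\mathrm{vol}(S)\mathrm{vol}(T)}\le\tilde\lambda(1+\gamma)d\sqrt{|S||T|}$.

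The main obstacle is bounding the nontrivial eigenvalues of $\tilde A$ by $\tilde\lambda\le\lambda/((1-\gamma)d)$, given only that $A$ has nontrivial eigenvalues at most $\lambda$. This needs a Courant--Fischer interlacing argument. For $x\perp D^{1/2}\mathbf 1$, we have $|x^\top\tilde A x|=|y^\top Ay|$ with $y=D^{-1/2}x$ and $\|y\|^2\le\|x\|^2/((1-\gamma)d)$, but $y$ is not orthogonal to $v_1$. I would handle this using the min-max characterisation: the second eigenvalue of $\tilde A$ is a max over a codimension-one subspace, which can be compared with the codimension-one subspace $D^{1/2}v_1^\perp$. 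Combining this with the volume bounds then gives exactly the error factor $(1+\gamma)/(1-\gamma)$.
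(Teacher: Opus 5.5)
The paper gives no proof of its own here (the lemma is imported from \cite{FHMV25}), but your final route is correct and is essentially the standard argument behind that corollary. That route consists of the mixing bound for $\tilde A=D^{-1/2}AD^{-1/2}$ applied to the test vectors $D^{1/2}1_S$ and $D^{1/2}1_T$, the volume bounds giving the main term $\mathrm{vol}(S)\mathrm{vol}(T)/2e(G)$, and the comparison $\max_{k\ge 2}|\tilde\mu_k|\le \lambda/((1-\gamma)d)$ for the eigenvalues $\tilde\mu_1\ge\dots\ge\tilde\mu_n$ of $\tilde A$. That comparison is the same one, $s_2(D^{-1/2}A_GD^{-1/2})\le s_2(A_G)\|D^{-1/2}\|^2$, which the paper uses in item (a) of the proof of Theorem~\ref{thm:patched1}.

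Your min-max over $D^{1/2}(v_1^\perp)$ does give $\tilde\mu_2\le\lambda/((1-\gamma)d)$. To make it complete, add the equally easy lower bound $\tilde\mu_n=\min_{y\neq 0} y^\top Ay/y^\top Dy\ge-\lambda/((1-\gamma)d)$, which you did not state. Drop the earlier entrywise Perron-vector detour: it is not needed, and its bounds were never justified.
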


Next, we state and prove a generalisation of the Ramanujan spectral lower bound on $\lambda$.

\begin{lemma}\label{l: spectral-lower}
For every $(n,(1\pm \gamma)d,\lambda)$-graph $G$, we have 
\[
    \lambda^2\ge \frac{d((1-\gamma)n-(1+\gamma)^2 d)}{n-1}.
\]
In particular, if $d=o(n)$, then $\lambda\ge (1-o(1))\sqrt d$.
\end{lemma}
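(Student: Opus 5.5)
The plan is to use the trace of the square of the adjacency matrix $A$ of $G$. On one side, $\mathrm{tr}(A^2)=\sum_v \deg_G(v)=2|E(G)|$, which is at least $(1-\gamma)dn$ because every degree is at least $(1-\gamma)d$. On the other side, $\mathrm{tr}(A^2)=\sum_{i=1}^n\lambda_i^2=\lambda_1^2+\sum_{i\ge2}\lambda_i^2$. By the definition of an $(n,(1\pm\gamma)d,\lambda)$-graph, each of the terms with $i\ge 2$ is at most $\lambda^2$. Hence
\[
(1-\gamma)dn\le \lambda_1^2+(n-1)\lambda^2 .
\]

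Next we need an upper bound on $\lambda_1$. Since $A$ is symmetric and nonnegative, the largest eigenvalue is bounded by the maximum row sum: take an eigenvector $x$ for $\lambda_1$ and look at a coordinate with maximal $|x_v|$ to get $\lambda_1\le \max_v\deg_G(v)\le (1+\gamma)d$. This gives $\lambda_1^2\le (1+\gamma)^2d^2$. Substituting into the trace inequality yields
\[
(n-1)\lambda^2\ge (1-\gamma)dn-(1+\gamma)^2d^2=d\big((1-\gamma)n-(1+\gamma)^2d\big),
\]
and dividing by $n-1$ gives the claimed bound.

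For the \emph{in particular} part, assume $\gamma$ is bounded (in our applications $\gamma=o(1)$, or at least $\gamma<1$ is fixed). If $d=o(n)$, then $(1+\gamma)^2d=o(n)$, and the right-hand side becomes $d(1-\gamma-o(1))n/(n-1)$. When $\gamma=o(1)$, as in the setting of \Cref{th: main} where $\gamma=c_3(\lambda/d)^{1/3}$ is small, this gives $\lambda\ge(1-o(1))\sqrt d$.

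There is no real obstacle. The only point needing care is the interpretation of the "in particular" clause: it should be read with $\gamma=o(1)$ implicit. Otherwise one only gets $\lambda\ge (1-o(1))\sqrt{(1-\gamma)d}$, and the proof should state this.
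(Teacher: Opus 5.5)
Your proof is correct and follows the paper's argument: bound $\mathrm{tr}(A^2)=\sum_v\deg_G(v)\ge(1-\gamma)dn$, use $\lambda_1\le(1+\gamma)d$ (you supply the short eigenvector proof that the paper cites as well known), and bound each of the remaining $n-1$ squared eigenvalues by $\lambda^2$. Your remark that the ``in particular'' clause needs $\gamma=o(1)$ is well taken, since otherwise one only gets $\lambda\ge(1-o(1))\sqrt{(1-\gamma)d}$. The paper leaves this implicit, and for small $\gamma$ that weaker bound still gives the $\lambda\ge\sqrt d/2$ used later.
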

\begin{proof}
Denote by $\lambda_1,\lambda_2,\ldots,\lambda_n$ the eigenvalues of $G$, and by $d_1,d_2,\ldots,d_n$ the degree sequence of $G$. 
It is well-known that $\lambda_1$ lies between the average degree and the maximum degree of $G$ (see e.g. \cite[Proposition 3.1.2]{zbMATH05982708}), so $(1-\gamma)d\le \lambda_1\le (1+\gamma)d$. 
Furthermore, $\sum_{j=1}^n\lambda_j^2=\mathrm{tr}(A^2)=\sum_{j=1}^n d_i=(1\pm \gamma)dn$, and thus
\[
    \sum_{j=2}^n\lambda_j^2\ge (1-\gamma)dn-(1+\gamma)^2d^2=d((1-\gamma)n-(1+\gamma)^2d).
\]
As each summand on the left is at most $\lambda^2$, the claim follows.
\end{proof}

Finally, we introduce two important definitions. The first one formalises the type of `almost regularity' we require from our graphs.

\begin{definition}\label{def: reg}
Fix $\delta\in[0,1)$ and $D\ge1$. A graph $H$ is called $(\delta,D)$\emph{-nearly regular} if $H$ has no vertex of degree more than $(1+\delta)D$, and $H$ has at most $\delta |V(H)|$ vertices of degree less than $(1-\delta)D$.
\end{definition}

The last definition in this section provides a convenient notation for the pseudorandomness property~\eqref{eq: EML-almost-regular}.

\begin{definition}\label{def: EML}
For real numbers $\lambda,\gamma\ge 0$ and positive integers $d<n$, we say that a graph $H$ has property $\EM(n,(1\pm\gamma)d,\lambda)$ if \eqref{eq: EML-almost-regular} holds for all sets $S,T\subseteq V(H)$.
\end{definition}

Observe that if $H$ has property $\EM(n,(1\pm\gamma)d,\lambda)$, then so does every induced subgraph of $H$.

\section{Proof of \texorpdfstring{\Cref{th: main}}{Theorem 1.1}}\label{s: proof-main}
%(Sahar): Add buffer & explain structure.

\subsection{Setup and parameters}\label{s: setup}
We work with constants 
\[
    0<c_0\ll c_1\ll \beta\ll \eps\ll\alpha<1.
\]
To begin with, throughout Sections~\ref{s: setup}--\ref{s: proof_d<<n} we assume that
\begin{equation}\label{eq: d-small}
    d^6\le n,
\end{equation}
and prove the theorem for an $(n,d,\lambda)$-graph $G$, choosing $c_2\coloneqq (c_0,c_1,\beta,\eps)>0$ sufficiently small and setting $c_3=1$. Set 
\[
k=\left\lfloor c_0\log(d/\lambda)\right\rfloor
\]
and note that we may assume that $k\ge3$ (indeed, up to small and harmless change in constants, for $k<3$ the result follows from \cite[Theorem 1]{DKMZ25}).
We define
\[
    a=1+\eps,
    \qquad
    n_i=\e^{-ai}n,
    \qquad
    d_i=\e^{-ai}d,
    \qquad
    p_{i+1}=a/d_i
    \qquad \text{for all $0\le i\le k$}.
\]
Set $\delta_0=0$ and, for all $i\in [k]$, define
\[
    \delta_i=100\left(\delta_{i-1}+\left(\frac{\lambda}{d_{i-1}}\right)^{1/3}\right).
\]
In particular, $\lambda/d_i\le (\delta_{i+1}/100)^3$. Furthemore, note that $\lambda / d_i=\e^{ai}(\lambda/d)$ for all integers $i \in [0,k]$, and also
\begin{equation}\label{eq: bound on delta}
    \delta_i = \sum_{j=1}^i 100^j \left(\frac{\lambda}{d_{i-j}}\right)^{1/3} \le 100^{i+1}\left(\frac{\lambda}{d}\right)^{1/3}\e^{ai/3}
    \le 100^{2c_0\log(d/\lambda)}\left(\frac{\lambda}{d}\right)^{1/3}\le \left(\frac{\lambda}{d}\right)^{1/4}\le c_1^{1/4},
\end{equation}
where the second inequality uses that $k=\left\lfloor c_0\log(d/\lambda)\right\rfloor$, and the third inequality uses that $c_0$ is small enough. In particular, by choosing $c_1$ sufficiently small in terms of $\beta,\eps$ we may (and will) assume throughout that
\begin{equation}\label{eq: hierarchy}
    0\le\delta_i\ll \beta \ll \eps\ll1
    \qquad\text{for all integers }i\in [0,k].
\end{equation}
Finally, set $\gamma=(\lambda/d)^{1/3}$ and note that
\begin{equation}\label{eq:gamma}
100\gamma\le \min_{i\in [k]} \delta_i.
\end{equation}

\subsection{A regularisation step}\label{s: regular}
In this subsection, we inductively construct the sequence of induced subgraphs $G=G(0)\supseteq G(1)\supseteq\cdots\supseteq G(k)$ promised in the proof outline after proving a couple of auxiliary lemmas. 
The first lemma estimates the number of vertices which remain after deleting the closed neighbourhood of a random set.

\begin{lemma}\label{l: estimate on N[Yi]}
Fix an integer $i\in [0,k-1]$. Suppose that $G(i)$  is an induced subgraph of $G$ and is a $(\delta_i,d_i)$-nearly regular graph on $(1\pm2\delta_i)n_i$ vertices satisfying $\EM(n,(1\pm \gamma)d,\lambda)$. Let $Y_{i+1}\subseteq V(G(i))$ be obtained by retaining every vertex independently with probability $p_{i+1}$. Then, with probability at least $1-\exp(-\sqrt n)$,
\[
    |V(G(i))\setminus N_{G(i)}[Y_{i+1}]|
    =(1\pm\delta_{i+1}/10)\e^{-a}|V(G(i))|.
\]
\end{lemma}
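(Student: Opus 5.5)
Write $H=G(i)$, $V=V(H)$, $m=|V|=(1\pm2\delta_i)n_i$, $p=p_{i+1}=a/d_i$ and $X=|V\setminus N_H[Y_{i+1}]|=\sum_{v\in V}\mathbbm 1\{N_H[v]\cap Y_{i+1}=\emptyset\}$. The plan is to compute $\mathbb E X$ from near-regularity, and then obtain concentration from the bounded difference inequality (\Cref{azuma}), using the hypothesis $d^6\le n$ to make the Lipschitz constant harmless.

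\textbf{Expectation.} For each $v$ we have $\mathbb P(v\notin N_H[Y_{i+1}])=(1-p)^{\deg_H(v)+1}$. Suppose $\deg_H(v)=(1\pm\delta_i)d_i$, which holds for all but at most $\delta_i m$ vertices. Since $p=a/d_i$ and $d_i\ge d\e^{-ak}\ge d(\lambda/d)^{2c_0}\ge\sqrt d\to\infty$, this probability equals
\[
\exp\bigl(-a(1\pm\delta_i)(1+O(1/d_i))\bigr)=\e^{-a}(1\pm 2a\delta_i\pm O(1/d_i)).
\]
The exceptional vertices contribute a quantity in $[0,\delta_i m]$. We therefore get $\mathbb E X=\e^{-a}m(1\pm 10\delta_i\pm O(1/d_i))$. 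Since $\delta_{i+1}\ge100\delta_i$ and $\delta_{i+1}\ge 100(\lambda/d_i)^{1/3}\ge 100 d_i^{-1/6}\gg 1/d_i$ (using $\lambda\ge(1-o(1))\sqrt d$ from \Cref{l: spectral-lower}), this gives $\mathbb E X=(1\pm\delta_{i+1}/20)\e^{-a}m$. If $i=0$, then $\delta_0=0$ and the error is just $O(1/d)$, which is again absorbed.

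\textbf{Concentration.} View $X$ as a function of the independent indicators $(\mathbbm 1\{u\in Y_{i+1}\})_{u\in V}$. Toggling one vertex $u$ changes the survival status only of vertices in $N_H[u]$. Hence $X$ is $C$-Lipschitz with $C\le(1+\delta_i)d_i+1\le 2d$. Applying \Cref{azuma} with $t=\delta_{i+1}\e^{-a}m/20$ gives the bound
\[
2\exp\left(-\frac{t^2}{2C^2mp+2Ct/3}\right).
\]
The terms compare as follows:
\begin{itemize}
\item $C^2mp=O(d^2\cdot m/d_i)=O(d\,\e^{ai}m)$;
\item $t\ge c\,\delta_{i+1}m\ge c\, d^{-1/6}m$;
\item $m\ge n_k/2\ge n(\lambda/d)^{2c_0}/2\ge n d^{-c_0}$.
\end{itemize}
With $\e^{ai}\le d^{c_0}$ and $n\ge d^6$, the exponent is of order at least
\[
\min\Bigl(\frac{t^2}{dm\,\e^{ai}},\frac{t}{d}\Bigr)\ge \min\bigl(m\,d^{-4/3-c_0},\,m\,d^{-7/6}\bigr)\ge n\,d^{-1.5}\ge n^{3/4}\gg\sqrt n.
\]
This yields probability at least $1-\exp(-\sqrt n)$ that $X=\mathbb E X\pm t$. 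Combined with the expectation estimate, this gives $X=(1\pm\delta_{i+1}/10)\e^{-a}m$.

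\textbf{Main obstacle.} The delicate point is the concentration step. The Lipschitz constant is as large as $d$, so the bound only works because $m$ is polynomially larger than $d$, which is exactly where $d^6\le n$ is used. We also need the lower bound $\delta_{i+1}\gtrsim d^{-1/6}$ coming from $\lambda\gtrsim\sqrt d$ (and the $(\lambda/d)^{1/3}$ term in its definition). The $\EM$ property is not needed here. I would double-check the exponent bookkeeping, namely $\e^{ai}\le(d/\lambda)^{ac_0}$, in the worst case $i=k-1$.
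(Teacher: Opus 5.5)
Your proposal is correct and is essentially the paper's argument. You compute the expectation from near-regularity, then apply the bounded difference inequality with a Lipschitz constant of order the degree, and close it off using $\lambda\gtrsim\sqrt d$ (so that $\delta_{i+1}\gtrsim d^{-1/6}$) together with $d^6\le n$.

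One constant slip needs fixing. From $\mathbb E X=\e^{-a}m(1\pm10\delta_i\pm O(1/d_i))$ you cannot conclude $(1\pm\delta_{i+1}/20)$, because $\delta_{i+1}\ge100\delta_i$ only gives $10\delta_i\le\delta_{i+1}/10$. For $i\ge1$ one really has $\delta_{i+1}\approx100\delta_i$, so this error alone would use up the whole $\delta_{i+1}/10$ budget before $t$ is added. Counting more carefully repairs it:
\begin{itemize}
\item every vertex has degree at most $(1+\delta_i)d_i$, which gives the lower bound $1-a\delta_i-O(1/d_i)$;
\item the upper bound is $1+(1.1a+\e^{a})\delta_i\le1+4\delta_i$.
\end{itemize}
So the expectation error is at most $\delta_{i+1}/25+O(1/d_i)$, as in the paper.

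Finally, if you take the Lipschitz constant to be $2d_i$ rather than $2d$, then $C^2mp=O(d_im)$. The exponent becomes $\Omega(\delta_{i+1}^2n_i/d_i)=\Omega(\delta_{i+1}^2n/d)$, since $n_i/d_i=n/d$. This removes the $\e^{ai}$ bookkeeping you flagged as the point to double-check.
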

\begin{proof}
Let $X=|V(G(i))\setminus N_{G(i)}[Y_{i+1}]|$. A vertex $v\in V(G(i))$ is counted by $X$ precisely when none of the vertices in $\{v\}\cup N_{G(i)}(v)$ are retained. Thus
\[
    \mathbb E X=\sum_{v\in V(G(i))}(1-p_{i+1})^{\deg_{G(i)}(v)+1}.
\]
Since $G(i)$ is $(\delta_i,d_i)$-nearly regular, all vertices have degree at most $(1+\delta_i)d_i$, and all but at most $\delta_i |V(G(i))|$ vertices have degree at least $(1-\delta_i)d_i$. Since $p_{i+1} = a/d_i\ll \delta_i\ll 1$, this gives
\[
    \mathbb E X = (1-p_{i+1})^{(1\pm \delta_i)d_i+1} |V(G(i))|\pm \delta_i |V(G(i))| =(1\pm4\delta_i)\e^{-a}|V(G(i))|
    =(1\pm\tfrac1{20}\delta_{i+1})\e^{-a}|V(G(i))|,
\]
where the last step uses $\delta_{i+1}\ge100\delta_i$. 

Next, note that adding or removing one vertex to/from $Y_{i+1}$ can change $X$ by at most $1+(1+\delta_i)d_i\le 2d_i$. 
By Lemma~\ref{azuma} with $m=|V(G(i))|=(1\pm2\delta_i)n_i$, $C=2d_i$, $p=p_{i+1}$ and $t=\frac1{20}\delta_{i+1}\e^{-a}|V(G(i))|$, we obtain
\[
    \mathbb P\left(|X-\mathbb E X|>\tfrac1{20}\delta_{i+1}\e^{-a}|V(G(i))|\right)
    \le \exp\left(-\Omega\left(\frac{\delta_{i+1}^2 n_i}{d_i}\right)\right).
\]
Now note that $\lambda\ge \sqrt d/2$ by Lemma~\ref{l: spectral-lower}, $d_i\le d$ by construction, and $d^6\le n$ by \eqref{eq: d-small}. Therefore,
\[
    \frac{\delta_{i+1}^2 n_i}{d_i} = \Omega\left(\left(\frac{\lambda}{d_i}\right)^{2/3}\frac{n}{d}\right)
     = \Omega\left(\frac{n}{d^{4/3}}\right)
    \ge n^{1/2}.
\]
completing the proof.
\end{proof}
Next, we show that, for every sufficiently large vertex subset $A$ of a $(\delta_i,d_i)$-nearly regular graph $G$, $G[A]$ contains a large $(\delta_{i+1},d_{i+1})$-nearly regular induced subgraph.

\begin{lemma}\label{l: good regular subgraph}
Fix an integer $i\in [0,k-1]$. Suppose that $G(i)$  is an induced subgraph of $G$ and is a $(\delta_i,d_i)$-nearly regular graph on $(1\pm2\delta_i)n_i$ vertices satisfying $\EM(n,(1\pm \gamma)d,\lambda)$. Consider a vertex set $A\subseteq V(G(i))$ such that
\[
    |A|\ge (1-\delta_{i+1}/5)\e^{-a}|V(G(i))|.
\]
Then, $G(i)[A]$ contains a $(\delta_{i+1},d_{i+1})$-nearly regular induced subgraph on $(1\pm2\delta_{i+1})n_{i+1}$ vertices.
\end{lemma}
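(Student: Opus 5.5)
The plan is to pass to a subset of $A$ of exactly the right size, delete the few vertices of too large degree, and control both the deleted set and the set of low-degree vertices using only the expander-mixing property $\EM(n,(1\pm\gamma)d,\lambda)$, which every induced subgraph of $G$ inherits. Write $\delta=\delta_{i+1}$.

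\textbf{Step 1: fix the size.} Since only a lower bound on $|A|$ is given, first pick an arbitrary $A'\subseteq A$ with $|A'|=(1-\delta/5)\e^{-a}|V(G(i))|$. Using $|V(G(i))|=(1\pm2\delta_i)n_i$, $n_{i+1}=\e^{-a}n_i$ and $\delta_i\le\delta/100$, we get
\[
|A'|=(1\pm(\delta/5+3\delta_i))n_{i+1}=(1\pm\delta/4)\,n_{i+1}.
\]
The key observation is that $n_i/n=d_i/d$ for every $i$, so $d|A'|/n=(1\pm\delta/4)\,d_{i+1}$. In words, the global density $d/n$, restricted to a set of size about $n_{i+1}$, predicts degree about $d_{i+1}$.

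\textbf{Step 2: delete high-degree vertices.} Let $B$ be the set of $v\in A'$ with $e(v,A')>(1+\delta/2)d_{i+1}$. Apply the upper bound in \eqref{eq: EML-almost-regular} to $S=B$, $T=A'$. Since $\gamma\le\delta/100$ by \eqref{eq:gamma}, this gives
\[
|B|(1+\delta/2)d_{i+1}\le e(B,A')\le (1+\delta/4+O(\gamma))\,d_{i+1}|B|+2\lambda\sqrt{|A'||B|}.
\]
Hence $|B|\le O\big(\lambda^2|A'|/(\delta^2d_{i+1}^2)\big)$. Now use $\lambda/d_{i+1}=\e^{a}\lambda/d_i\le \e^a(\delta/100)^3$. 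This yields $|B|\le O(\delta^4)|A'|$, which is much smaller than $\delta|A'|/10$.

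\textbf{Step 3: the final subgraph.} Set $H=G[A'\setminus B]$, an induced subgraph of $G(i)[A]$. Every vertex of $H$ has degree at most $(1+\delta/2)d_{i+1}\le(1+\delta)d_{i+1}$ in $H$, because deleting $B$ only lowers degrees. Its order is $(1\pm(\delta/4+O(\delta^4)))n_{i+1}$, which lies within $(1\pm2\delta)n_{i+1}$. It remains to count low-degree vertices. Let $L$ be the set of $v\in V(H)$ with $e(v,V(H))<(1-\delta)d_{i+1}$. Apply the lower bound in \eqref{eq: EML-almost-regular} to $S=L$, $T=V(H)$, using $d|V(H)|/n\ge(1-\delta/4-O(\delta^4))d_{i+1}$. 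This gives
\[
|L|\cdot\tfrac{\delta}{2}\,d_{i+1}\le 2\lambda\sqrt{|L||V(H)|},
\]
so again $|L|=O(\delta^4)|V(H)|\le\delta|V(H)|$. Hence $H$ is $(\delta_{i+1},d_{i+1})$-nearly regular, as required.

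\textbf{Main obstacle.} The only delicate point is the constant bookkeeping. The multiplicative errors, namely $2\delta_i$ from $|V(G(i))|$, $\delta/5$ from the choice of $|A'|$, and the $\gamma$-factors in \eqref{eq: EML-almost-regular}, must stay below $\delta/2$. Separately, the additive error $\lambda\sqrt{|S||T|}$ must be beaten using $\lambda/d_{i+1}\lesssim\delta^3$. This is exactly what the recursive definition of $\delta_{i+1}$, together with $\lambda/d_i\le(\delta_{i+1}/100)^3$ and \eqref{eq:gamma}, is designed to ensure.
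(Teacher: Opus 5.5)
Your proof is correct and follows essentially the same route as the paper. Both trim $A$ to the right size, remove the high-degree vertices using the upper bound in \eqref{eq: EML-almost-regular}, and control the low-degree vertices with the lower bound, using $\lambda/d_{i+1}\le \e^{a}(\delta_{i+1}/100)^3$ and $\gamma\le\delta_{i+1}/100$.

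The one difference is in the low-degree count. You apply the lower expander-mixing bound directly to the final set $A'\setminus B$. The paper instead bounds the set $U^-$ of low-degree vertices inside the trimmed $A$, and then separately counts the vertices with at least $\delta_{i+1}d_{i+1}/2$ neighbours in $U^+$, using the maximum degree of $G(i)$. Your version is a slight streamlining of the same argument.
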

\begin{proof}
By passing to a subset of $A$ and using that $100\delta_i\le \delta_{i+1}$, we may assume that
\begin{equation}\label{eq:|A|}
    |A|=(1-\delta_{i+1}/5)\e^{-a}|V(G(i))| =(1\pm\delta_{i+1}/3)n_{i+1}.    
\end{equation}
Define $U^+=\{v\in A:e_{G(i)}(v,A)>(1+\delta_{i+1}/2)d_{i+1}\}$.
If $U^+\neq\varnothing$, then
\begin{equation}\label{eq: lower_U}
    e(U^+,A)>(1+\delta_{i+1}/2)d_{i+1}|U^+|.
\end{equation}
On the other hand, the $\EM(n,(1\pm \gamma)d,\lambda)$ property gives
\begin{align}\label{eq: upper_U}\notag
    e(U^+,A)&\le \frac{(1+\gamma)^2d}{(1-\gamma)n}|U^+||A|+\frac{1+\gamma}{1-\gamma}\lambda\sqrt{|U^+||A|}
    \le \frac{(1+\gamma)^2}{1-\gamma}(1+\delta_{i+1}/3)d_{i+1}|U^+|+\frac{1+\gamma}{1-\gamma}\lambda\sqrt{2n_{i+1}|U^+|}\\
    &\le (1+\delta_{i+1}/25)(1+\delta_{i+1}/3)d_{i+1}|U^+|+2\lambda\sqrt{2n_{i+1}|U^+|},
\end{align}
where the last inequality follows by~\eqref{eq:gamma}. Combining \eqref{eq: lower_U} and \eqref{eq: upper_U} yields
\[
    \frac{\delta_{i+1}}{10}d_{i+1}|U^+|\le 2\lambda\sqrt{2n_{i+1}|U^+|}.
\]
Solving for $|U^+|$, we have
\begin{equation}\label{eq: high-set-bound}
    |U^+|<\frac{800\lambda^2 n_{i+1}}{\delta_{i+1}^2d_{i+1}^2}
    \le \delta_{i+1}^4n_{i+1},
\end{equation}
where the second inequality used that $\delta_{i+1}\ge 100(\lambda/d_i)^{1/3}$. 

Similarly, let
\[
    U^-=\{v\in A:e_{G(i)}(v,A)<(1-\delta_{i+1}/2)d_{i+1}\}.
\]
The same argument, now using the lower bound guaranteed by $\EM(n,(1\pm \gamma)d,\lambda)$, gives
\begin{equation}\label{eq: low-set-bound}
    |U^-|\le \delta_{i+1}^4n_{i+1}.
\end{equation}

Set $A'=A\setminus U^+$ and note that,
by combining \eqref{eq:|A|} and \eqref{eq: high-set-bound}, we have that $|A'|=(1\pm2\delta_{i+1})n_{i+1}$.
Moreover, the graph $G(i)[A']$ has maximum degree at most $(1+\delta_{i+1}/2)d_{i+1}<(1+\delta_{i+1})d_{i+1}$. It remains to count vertices whose degree in $A'$ is too small. Any vertex $v\in A'$ with
\begin{equation}\label{eq:degA'}
e_{G(i)}(v,A')<(1-\delta_{i+1})d_{i+1}    
\end{equation}
either belongs to $U^-$ or has at least $\delta_{i+1}d_{i+1}/2$ neighbours in $U^+$. Since $G(i)$ has maximum degree at most $(1+\delta_i)d_i$, the number of vertices of the latter type is at most
\[
    \frac{(1+\delta_i)d_i|U^+|}{\delta_{i+1}d_{i+1}/2}
    \le \delta_{i+1}^2n_{i+1}.
\]
Combining this~\eqref{eq:|A|} and \eqref{eq: low-set-bound} shows that $A'$ contains at most $(1-\delta_{i+1})|A'|$ vertices satisfying~\eqref{eq:degA'}, and thus $G(i)[A']$ is the desired $(\delta_{i+1},d_{i+1})$-nearly regular induced subgraph.
\end{proof}

\paragraph{The construction of $G(0),\ldots,G(k)$.}
Set $G(0)=G$ and suppose that $G(i)$ has been defined for some $i\in [0,k-1]$. 
Recall the random set $Y_{i+1}\subseteq V(G(i))$ retaining every vertex independently with probability $p_i$. 
If $G(i)\setminus N_{G(i)}[Y_{i+1}]$ contains a $(\delta_{i+1},d_{i+1})$-nearly regular induced subgraph on $(1\pm 2\delta_{i+1})n_{i+1}$ vertices, we choose one such subgraph and call it $G(i+1)$; otherwise, we set $G(i+1)=\emptyset$. Observe that by construction, $G(0),\ldots,G(k)$ form a nested sequence of induced subgraphs of $G$. Furthermore,

\begin{lemma}\label{l: validity}
With probability at least $1-\exp(-n^{1/3})$, for every integer $i\in [0,k]$, the induced graph $G(i)\subseteq G$ is a $(\delta_i,d_{i})$-nearly regular graph on $(1\pm2\delta_i)n_i$ vertices and satisfies $\EM(n,(1\pm \gamma)d,\lambda)$.
\end{lemma}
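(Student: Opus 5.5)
The plan is to argue by induction on $i$, combining Lemma~\ref{l: estimate on N[Yi]} and Lemma~\ref{l: good regular subgraph} at each step and finishing with a union bound over the at most $k+1$ stages.

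The base case $i=0$ is immediate. $G(0)=G$ is an $(n,d,\lambda)$-graph, hence $d$-regular on exactly $n=n_0$ vertices, so it is $(0,d_0)$-nearly regular with $\delta_0=0$. By Lemma~\ref{cor:EML-almost-regular} (or the expander mixing lemma, Lemma~\ref{l: eml}), it satisfies $\EM(n,(1\pm\gamma)d,\lambda)$.

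For the inductive step, let $\mathcal A_i$ denote the event that $G(i)$ has the three required properties. Condition on the history up to stage $i$ and assume $\mathcal A_i$ holds. The set $Y_{i+1}$ is then a fresh $p_{i+1}$-random subset of $V(G(i))$, independent of the past, so Lemma~\ref{l: estimate on N[Yi]} applies. With conditional probability at least $1-\exp(-\sqrt n)$, the set $A\coloneqq V(G(i))\setminus N_{G(i)}[Y_{i+1}]$ satisfies
\[
|A|\ge(1-\delta_{i+1}/10)\e^{-a}|V(G(i))|\ge(1-\delta_{i+1}/5)\e^{-a}|V(G(i))|.
\]
Lemma~\ref{l: good regular subgraph} then guarantees that $G(i)[A]$ contains a $(\delta_{i+1},d_{i+1})$-nearly regular induced subgraph on $(1\pm2\delta_{i+1})n_{i+1}$ vertices. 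By the construction rule, $G(i+1)$ is chosen to be such a subgraph rather than $\emptyset$. Moreover, $G(i+1)$ is an induced subgraph of $G$, since induced subgraphs of induced subgraphs are induced. It therefore inherits $\EM(n,(1\pm\gamma)d,\lambda)$ by the observation following Definition~\ref{def: EML}. Hence
\[
\mathbb P(\mathcal A_{i+1}^c\cap\mathcal A_i)\le\exp(-\sqrt n).
\]

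Summing over $i\in[0,k-1]$ gives a failure probability of at most $k\exp(-\sqrt n)$. Since $k\le c_0\log(d/\lambda)\le\log n$, this is at most $\exp(-n^{1/3})$ for $n$ large.

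The only delicate point is the conditioning. Lemma~\ref{l: estimate on N[Yi]} requires $G(i)$ to be a fixed graph satisfying its hypotheses, whereas here $G(i)$ depends on $Y_1,\dots,Y_i$. This is handled by applying the lemma conditionally on $Y_1,\dots,Y_i$ on the event $\mathcal A_i$, and bounding the failure probability by the chain $\mathbb P(\bigcup_i \mathcal A_i^c)\le\sum_i\mathbb P(\mathcal A_{i+1}^c\mid\mathcal A_i)$. No other step presents an obstacle, since the numerical hypotheses of both lemmas were already checked in their proofs.
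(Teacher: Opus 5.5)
Your proof is correct and follows the paper's argument. Both proceed by induction on $i$, apply Lemma~\ref{l: estimate on N[Yi]} and then Lemma~\ref{l: good regular subgraph}, use that induced subgraphs inherit the expander-mixing property, and finish with a union bound over the at most $\log n$ stages. Your explicit treatment of conditioning on $Y_1,\dots,Y_i$ (on the event $\mathcal A_i$) just spells out what the paper leaves implicit.
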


\begin{proof}
The assertion holds deterministically for $G(0)=G$. Assume it holds for $G(i)$. By Lemma~\ref{l: estimate on N[Yi]}, with probability at least $1-\exp(-\sqrt n)$, the set $A=V(G(i))\setminus N_{G(i)}[Y_{i+1}]$ has size at least $(1-\delta_{i+1}/5)\e^{-a}|V(G(i))|$. 
Then, Lemma~\ref{l: good regular subgraph} produces the desired $G(i+1)$. Since induced subgraphs inherit $\EM(n,(1\pm \gamma)d,\lambda)$, the claim follows by induction and a union bound over $k\le \log n$ stages.
\end{proof}

%(Sahar): Reached here.
In the rest of the proof, we work with the sequence $G(0),\ldots,G(k)$ of induced subgraphs of $G$, which satisfy the properties of Lemma~\ref{l: validity}. 
For every $i\in [k]$, we will abbreviate
\begin{equation}\label{eq: theta-def}
    \theta_i=\left(\frac{\lambda}{d_{i-1}}\right)^{\beta/8}.
\end{equation}
By the choice of $c_0$ and $c_1$, for every $i\in[k]$ we have
\begin{equation}\label{eq: theta-bounds}
    \delta_i,\theta_i\ll\beta\ll\eps,
    \qquad
    \theta_i\frac{n_{i-1}}{d_{i-1}} = \theta_i\frac{n}{d}\ge n^{4/5}.
\end{equation}
Indeed, the first inequality follows since $c_1\ll \beta$ and $\lambda/d_{i-1}\le \sqrt{\lambda/d}\le \sqrt{c_1}\ll \beta$, and the second inequality follows from Lemma~\ref{l: spectral-lower}, \eqref{eq: d-small} and the relation $\beta\ll 1$.

\subsection{Expansion of subsets of random sets}\label{s: expansion}
The following lemma estimates the probability that moderately large subsets of $Y_i$ expand well in $G(i-1)$. The proof follows ideas from \cite[Lemma 3.1]{K15}, adapted to the present nearly regular setting.

\begin{lemma}\label{l: expansion}
With probability $1-O(n^{-3})$, for every $i\in[k]$ and every subset $L\subseteq Y_i$ of size
\[
    \theta_i\frac{n_{i-1}}{d_{i-1}}\le |L|\le (1-\beta)\frac{n_{i-1}}{d_{i-1}},
\]
we have
\[
    |N_{G(i-1)}(L)|\ge (1-\beta)^2\left((1-\delta_i)d_{i-1}|L|-\frac{d_{i-1}^2|L|^2}{n_{i-1}}\right).
\]
\end{lemma}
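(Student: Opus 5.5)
Fix $i\in[k]$ and condition on the history up to stage $i-1$, so that $G(i-1)$ is fixed and satisfies the conclusions of Lemma~\ref{l: validity}; then $Y_i$ is a fresh $p_i$-random subset of $V(G(i-1))$ with $p_i=a/d_{i-1}$. Write $H=G(i-1)$, $m=n_{i-1}$ and $D=d_{i-1}$. The plan is a union bound over pairs $(L,B)$, in the spirit of \cite[Lemma 3.1]{K15}. The event ``some $L\subseteq Y_i$ with $|L|=\ell$ violates the bound'' is contained in the event that there exist a set $L$ of size $\ell$ and a set $B\supseteq N_H(L)$ with $|B|=b$, where $b$ is the target bound, such that $L\subseteq Y_i$. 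Summing $p_i^{\ell}$ over all such $L$ directly is too crude, since it is of order $\binom{m}{\ell}p_i^\ell\approx (\e m a/(\ell D))^{\ell}$. We therefore also use that $Y_i\cap B$ can only contain vertices from $L$, which costs a factor $(1-p_i)^{|B\setminus L|}$. This factor is roughly $\exp(-a b/D)\approx \exp(-a(1-\beta)^2(\ell-D\ell^2/m))$.

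The most natural way to make this work is the standard DFS/``first violating set'' trick. First, the $\EM$ property shows that every set $L$ of this size has $|N_H(L)|\ge (1-\delta_i/2)D|L|-D^2|L|^2/m$ up to lower order terms. This is because $e(L,V\setminus L)$ is nearly $D|L|$, and $e(L,L)\le D|L|^2/m+2\lambda|L|$, with $\lambda|L|$ negligible since $\lambda\ll \delta_i D$. Second, if $N_H(L)$ is small, then many vertices outside $L$ have several neighbours in $L$. Third, rather than counting all $L$, we count $L$ as a union of $\ell$ vertices, each of which has few ``new'' neighbours. This gives an entropy factor of $\binom{m}{\ell}$ against a probability $p_i^{\ell}(1-p_i)^{|N_H(L)|}$.

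Combining these, the expected number of bad $L$ is at most
\[
\binom{m}{\ell}p_i^{\ell}\exp\left(-p_i(1-\beta)^2\left((1-\delta_i)D\ell-\frac{D^2\ell^2}{m}\right)\right)\le \left(\frac{\e a m}{\ell D}\exp\left(-a(1-\beta)^2\left(1-\delta_i-\frac{D\ell}{m}\right)\right)\right)^{\ell}.
\]
Writing $x=\ell D/m\in[\theta_i,1-\beta]$, the base is $\frac{\e a}{x}\e^{-a(1-\beta)^2(1-x)}$ up to small errors. This is where I expect the main obstacle: the base must be bounded below $1$ for every $x$ in the range, and it is not at $x$ small. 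At $x=\theta_i$ it is about $1/\theta_i$, which is huge.

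So the crude bound fails, and the actual proof must exploit that being bad is a much rarer event than $L\subseteq Y_i$. The fix is to count only the \emph{deficiency}. Since the typical neighbourhood of $L$ is already large by the $\EM$ property, a bad $L$ requires that $Y_i$ avoids a large set $N_H(L)$ while $L\subseteq Y_i$. Alternatively, one can condition on $|Y_i|\approx am/D$ and count subsets $L$ of $Y_i$ as $\binom{|Y_i|}{\ell}\le (\e a m/(\ell D))^\ell=(\e a/x)^\ell$. The key probability is then that $Y_i\setminus L$ misses $N_H(L)$, which is $\exp(-a(1-\beta)^2(1-\delta_i-x)\ell)$.

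The remaining task is to show that the product is at most $(1-\Omega(\beta))^{\ell}$ over the whole range $x\in[\theta_i,1-\beta]$. For small $x$, I would instead require only the weaker bound that $|N(L)|$ exceeds the target by the $\beta$-slack, so that the deficiency set has size about $\beta D\ell$. Counting the ways to choose which neighbours are ``missing'' via the $\EM$ bound on $e(L,\cdot)$ then makes the entropy $(\e/\beta)^{\beta D\ell}$-type factor beaten by $\exp(-\Omega(\beta^2 D\ell))$. Since $\ell\ge\theta_i m/D\ge n^{4/5}$ by \eqref{eq: theta-bounds}, the resulting $\exp(-\Omega(\ell))$ bounds survive a union bound over $\ell$ and $i\le k$, giving $1-O(n^{-3})$.
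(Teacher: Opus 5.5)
Your proposal has a genuine gap. Both of its basic steps are false, and the key counting idea is missing.

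\textbf{The probabilistic step.} The lemma concerns an \emph{arbitrary} $L\subseteq Y_i$, and $N_{G(i-1)}(L)$ is the neighbourhood in $G(i-1)$. So whether $L$ expands is a deterministic property of $L$, and the only random event is $L\subseteq Y_i$, which has probability $p_i^{\ell}$. Nothing forces $Y_i\cap N_{G(i-1)}(L)=\emptyset$. Hence the factor $(1-p_i)^{|B\setminus L|}$ is unjustified, and so is the version ``$Y_i\setminus L$ misses $N_H(L)$'' after conditioning on $|Y_i|$. Such a factor is legitimate only when $L$ is a union of components of $G(i-1)[Y_i]$, as in Lemma~\ref{l: small non-expand} and the bound on $\mathbb E\kappa_i'$. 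Even if you granted it, your base satisfies $\frac{\e a}{x}\e^{-a(1-\beta)^2(1-\delta_i-x)}\ge a^2\e^{2-a}>1$ for every $x\in(0,1)$. So the union bound fails on the whole range, not just near $x=\theta_i$.

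\textbf{The deterministic step.} The $\EM$ property controls $e(L,V(H)\setminus L)$, not $|N_H(L)|$. Converting via $e(L,N_H(L))\le\frac{D}{m}|L||N_H(L)|+\lambda\sqrt{|L||N_H(L)|}$ only yields $|N_H(L)|=\Omega((D/\lambda)^2|L|)$. Concretely, blow up each vertex of an $(n',d',\lambda')$-graph into an independent set of size $t$. This keeps $\lambda/d$ fixed, yet unions of classes satisfy $|N(L)|\le d|L|/t$. If your claim held, the lemma would be deterministic. Finally, the ``deficiency'' step has no random event that could produce $\exp(-\Omega(\beta^2D\ell))$. An entropy factor of order $(\e/\beta)^{\beta D\ell}$ could never be paid for by $p_i^{\ell}$.

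\textbf{The missing idea.} You need to beat $\binom{m}{\ell}p_i^{\ell}$ on the \emph{entropy} side, by showing that non-expanding sets are rare. Your third point gestures at this, but you still pay the full $\binom{m}{\ell}$ and never bound the number of choices. The paper proceeds as follows.
\begin{enumerate}[(1)]
\item Order $L$ as $(v_1,\ldots,v_\ell)$. Call $v_j$ bad if it has fewer than $(1-\beta/4)\bigl(D-D(D+1)(j-1)/m\bigr)$ neighbours in $U=V(H)\setminus(L_{j-1}\cup N_H(L_{j-1}))$.
\item If at most $\beta\ell/4$ positions are bad, summing the new neighbours shows that $L$ expands. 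Hence every ordering of a non-expanding $L$ has at least $\beta\ell/4$ bad positions.
\item Since $|U|\ge\beta m/3$, the $\EM$ lower bound applied to $e(B,U)$, where $B$ is the set of vertices that would be bad at position $j$, gives $|B|\le(20/\beta)^3(\lambda/D)^2m$.
\item Therefore there are at most $\binom{\ell}{\beta\ell/4}\bigl((20/\beta)^3(\lambda/D)^2m\bigr)^{\beta\ell/4}m^{(1-\beta/4)\ell}$ ordered non-expanding tuples.
\item Multiplying by $p_i^{\ell}/\ell!$ bounds the failure probability by
\[
\Bigl[(1+\eps)\e\Bigl(\tfrac{20}{\beta}\Bigr)^{\beta}\Bigl(\tfrac{\lambda}{D}\Bigr)^{\beta/2}\tfrac{m}{D\ell}\Bigr]^{\ell}\le\Bigl[(1+\eps)\e\Bigl(\tfrac{20}{\beta}\Bigr)^{\beta}\Bigl(\tfrac{\lambda}{D}\Bigr)^{3\beta/8}\Bigr]^{\ell}\le 2^{-\ell}.
\]
\end{enumerate}
The saving of $(\lambda/D)^{\beta/2}$ per vertex is exactly what defeats the loss $m/(D\ell)\le 1/\theta_i=(D/\lambda)^{\beta/8}$; this is why $\theta_i$ is defined as it is. With this count in place, your final union bound over $\ell\ge n^{4/5}$ and $i\le k$ goes through as you describe.
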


\begin{proof}
Fix $i\in [k]$. 
We call a set $L\subseteq V(G(i-1))$ non-expanding if it violates the displayed inequality. We now bound from above the probability that $Y_i$ contains such a set. For any set $L$, we abbreviate $\ell\coloneqq |L|$.

Consider an ordered sequence $\tau=(v_1,\ldots,v_\ell)$ of distinct vertices. 
For all $j\in [\ell]$, write $L_j=\{v_1,\ldots,v_j\}$ and $N_j=N_{G(i-1)}(L_j)$. For $j\in \{2,\ldots, \ell\}$, given $v_1,\ldots,v_{j-1}$, call a vertex $v\in V(G(i-1))\setminus L_{j-1}$ \textit{bad} if it has fewer than
\begin{align}
 (1-\beta/4)\left(d_{i-1}-\frac{d_{i-1}(d_{i-1}+1)(j-1)}{n_{i-1}}\right)\label{eq: bad}   
\end{align}
neighbours in $V(G(i-1))\setminus(L_{j-1}\cup N_{j-1})$, and \textit{good} otherwise.

Suppose first that at most $\beta\ell/4$ vertices of $\tau$ are bad. Each good vertex increases the current open neighbourhood by at least the displayed quantity in \eqref{eq: bad}, except that vertices chosen later in the sequence may have been counted earlier as boundary vertices; the latter accounts for at most $\ell$ vertices. Since $\ell\le(1-\beta)n_{i-1}/d_{i-1}$, we get in this case
\begin{align}\label{eq: neigh bound}
    |N_{G(i-1)}(L_\ell)|\notag
    &\ge \left(\sum_{j=\beta\ell/4+1}^{\ell}(1-\beta/4)\left(d_{i-1}-\frac{d_{i-1}(d_{i-1}+1)(j-1)}{n_{i-1}}\right)\right)-\ell \\
    &\ge (1-\beta/4)^2d_{i-1}\ell-\ell-(1-\beta/4)\sum_{j=\beta\ell/4+1}^\ell\frac{d_{i-1}(d_{i-1}+1)(j-1)}{n_{i-1}}.
\end{align}
Now, observe that 
\[
    (1-\beta/4)\sum_{j=\beta\ell/4+1}^\ell\frac{d_{i-1}(d_{i-1}+1)(j-1)}{n_{i-1}}\le (1-\beta/4)\frac{d_{i-1}(d_{i-1}+1)}{n_{i-1}}\cdot \frac{\ell(\ell-1)}{2}\le  (1-\beta)^2\frac{d_{i-1}^2\ell^2}{n_{i-1}}.
\]
In addition, since $(1-\beta)^2\delta_id_{i-1}>1$, we have that 
\[
    (1-\beta/4)^2d_{i-1}\ell-\ell\ge (1-\beta)^2d_{i-1}\ell-(1-\beta)^2\delta_id_{i-1}\ell\ge (1-\beta)^2(1-\delta_{i})d_{i-1}\ell.
\]
Substituting these calculations back into \eqref{eq: neigh bound} yields
\begin{align*}
    |N_{G(i-1)}(L_\ell)|
    &\ge (1-\beta/4)^2d_{i-1}\ell-\ell-(1-\beta/4)\sum_{j=\beta\ell/4+1}^\ell\frac{d_{i-1}(d_{i-1}+1)(j-1)}{n_{i-1}}\\
    &\ge (1-\beta)^2\left((1-\delta_{i})d_{i-1}\ell-\frac{d_{i-1}^2\ell^2}{n_{i-1}}\right).
\end{align*}
Thus, every ordering of a non-expanding set has at least $\beta\ell/4$ bad positions.

We next bound the number of choices for a bad vertex at a fixed position $j$. Let $X=L_{j-1}\cup N_{j-1}$, let $U=V(G(i-1))\setminus X$, and let $B$ be the set of bad vertices. Since $G(i-1)$ is $(\delta_{i-1},d_{i-1})$-nearly regular and $|V(G(i-1))|=(1\pm2\delta_{i-1})n_{i-1}$, we have
\begin{align}
    |U|\ge (1-2\delta_{i-1})n_{i-1}-((1+\delta_{i-1})d_{i-1}+1)(j-1)\ge \left(\beta/2-3\delta_{i-1}\right)n_{i-1}\ge \beta n_{i-1}/3,\label{eq: u and beta}
\end{align}
where the second inequality uses that $j\le\ell\le(1-\beta)n_{i-1}/d_{i-1}$, and the last inequality uses that $\delta_{i-1}\ll \beta$. 
Further, note that
\begin{align}
    d_{i-1}-\frac{d_{i-1}(d_{i-1}+1)(j-1)}{n_{i-1}}\le \frac{d_{i-1}|U|}{n_{i-1}}+3\delta_{i-1}d_{i-1}\le (1+\beta/20)\frac{d_{i-1}|U|}{n_{i-1}}\label{eq: bounding inner term}
\end{align}
where the last inequality uses \eqref{eq: u and beta} and that $\delta_{i-1}\ll \beta$. 
Hence, by definition of the bad vertices,
\begin{align}
    e(B,U)&\le (1-\beta/4)\left(d_{i-1}-\frac{d_{i-1}(d_{i-1}+1)(j-1)}{n_{i-1}}\right)|B|\notag\\
    &\le (1-\beta/4)(1+\beta/20)\frac{d_{i-1}|U|}{n_{i-1}}|B|\le (1-\beta/10)\frac{d_{i-1}|U|}{n_{i-1}}|B|,\label{eq: upper bound on e(b,u)}
\end{align}
where the second inequality follows from \eqref{eq: bounding inner term}.
On the other hand, by the property $\EM(n,(1\pm \gamma)d,\lambda)$ of $G(i-1)$,
\[
e(B,U)\ge \frac{(1-\gamma)^2d|B||U|}{(1+\gamma)n}-\frac{1+\gamma}{1-\gamma}\lambda\sqrt{|B||U|},
\]
and thus by \eqref{eq: upper bound on e(b,u)},
\begin{align}\label{eq: lambda bound B U}
    \lambda\sqrt{|B||U|}\ge \frac{(1-\gamma)^3d_{i-1}|B||U|}{(1+\gamma)^2n_{i-1}}-\frac{1-\gamma}{1+\gamma}\cdot \left(1-\frac{\beta}{10}\right)\frac{d_{i-1}|U|}{n_{i-1}}|B|\ge \frac{\beta d_{i-1}}{20n_{i-1}}|B||U|,
\end{align}
where the last inequality holds since $\gamma\ll \beta$. By combining \eqref{eq: u and beta} and \eqref{eq: lambda bound B U}, we obtain
\begin{equation}\label{eq: bad-choices}
    |B|\le \left(\frac{20\lambda}{\beta d_{i-1}}\right)^2\frac{n_{i-1}^2}{|U|}\le \left(\frac{20}{\beta}\right)^3\left(\frac{\lambda}{d_{i-1}}\right)^2n_{i-1}.
\end{equation}

By using that $\tbinom{\ell}{\beta\ell/4}\le (4\e/\beta)^{\beta\ell/4}$, it follows that the number of ordered non-expanding $\ell$-tuples is at most
\[
    \binom{\ell}{\beta\ell/4}
    \bigg(\bigg(\frac{20}{\beta}\bigg)^3\left(\frac{\lambda}{d_{i-1}}\right)^2n_{i-1}\bigg)^{\beta\ell/4}
    n_{i-1}^{(1-\beta/4)\ell}
    \le
    \bigg(\bigg(\bigg(\frac{20}{\beta}\bigg)^4\left(\frac{\lambda}{d_{i-1}}\right)^2\bigg)^{\beta/4}n_{i-1}\bigg)^\ell.
\]
Multiplying by $p_i^{\ell}$
%$\left(\frac{1+\eps}{d_{i-1}}\right)^\ell$ 
and dividing by $\ell!$, the probability that $Y_i$ contains a non-expanding $\ell$-set is at most
\[
    \left[
    \left(\bigg(\frac{20}{\beta}\bigg)^4\left(\frac{\lambda}{d_{i-1}}\right)^2\right)^{\beta/4}
    \frac{(1+\eps)\e n_{i-1}}{d_{i-1}\ell}
    \right]^\ell.
\]
Since $\ell\ge\theta_i n_{i-1}/d_{i-1}$ and $\theta_i=(\lambda/d_{i-1})^{\beta/8}\ll \beta\ll 1$, the expression in the square brackets above is at most
\[
    (1+\eps)\e\cdot \left(\frac{20}{\beta}\right)^\beta \left(\frac{\lambda}{d_{i-1}}\right)^{3\beta/8}\le (1+\eps)\e\cdot \frac{20}{\beta} \left(\frac{\lambda}{d_{i-1}}\right)^{3\beta/8}\le 100 \frac{\theta_i^3}{\beta}\le \frac{1}{2}.
\]
%which is at most $1/2$ since $\theta_i\ll \beta\ll 1$. 
Summing over all admissible $\ell$ gives a failure probability at most
\[
    \sum_{\ell\ge \theta_i n_{i-1}/d_{i-1}}2^{-\ell}=O(n^{-4})
\]
by \eqref{eq: theta-bounds}, and the lemma follows by a union bound over $i\in [k]$.
\end{proof}

\subsection{Estimating the excess of random induced subgraphs}\label{s: excess}
The \emph{excess} of a graph $F$ is
\[
    \ex(F)=|E(F)|-|V(F)|+\kappa(F),
\]
where $\kappa(F)$ denotes the number of connected components of $F$. 

Before turning to estimate the excess of each $G(i-1)[Y_i]$, we require the following auxiliary result.

\begin{lemma}\label{l: small non-expand}
With probability $1-O(n^{-3})$, for every $i\in[k]$, there are at most $\theta_in_{i-1}/d_{i-1}$ vertices in components $F$ of $G(i-1)[Y_i]$ satisfying
\[
    |V(F)|<\theta_i\frac{n_{i-1}}{d_{i-1}}
    \qquad\text{and}\qquad
    |N_{G(i-1)}(F)|\le (1-\beta)^3d_{i-1}|V(F)|.
\]
\end{lemma}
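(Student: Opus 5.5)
The plan is a first-moment bound over rooted subtrees, carried out separately for each $i\in[k]$. Call a component $F$ of $G(i-1)[Y_i]$ \emph{bad} if it satisfies both displayed conditions. Write $r=|V(F)|$ and $p=p_i=a/d_{i-1}$. Every bad $F$ with $r$ vertices contains a spanning tree $T$, rooted at some vertex $v$, with the following two properties. First, all $r$ vertices of $T$ lie in $Y_i$. Second, no vertex of $N_{G(i-1)}(V(T))$ lies in $Y_i$. By independence, these two events together have probability $p^r(1-p)^{|N(V(T))|}$.

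If we summed this over all rooted trees using Lemma~\ref{l: trees} with $\Delta=(1+\delta_{i-1})d_{i-1}$, we would get the usual count of small components. That count is of order $n_{i-1}/d_{i-1}$, because the factor $p$ for the root cancels one factor $n_{i-1}$ from the choice of the root. This is too large by a factor $\theta_i^{-1}$, so we must gain from the non-expansion condition. Note also that for bad $F$ the factor $(1-p)^{|N|}$ is only guaranteed to be at most $\e^{-a(1-\beta)^3 r}$, which is weaker than $\e^{-ar}$ by a factor $\e^{O(\beta r)}$.

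The gain will come from revealing $T$ in BFS order $v_1,\dots,v_r$. As in the proof of Lemma~\ref{l: expansion}, call position $j$ \emph{bad} if $v_j$ has fewer than $(1-\beta/4)d_{i-1}$ neighbours outside $L_{j-1}\cup N_{j-1}$. Since $r<\theta_i n_{i-1}/d_{i-1}$, the quadratic loss term $d_{i-1}^2r^2/n_{i-1}$ is at most $\theta_i d_{i-1}r\ll\beta d_{i-1}r$, so it is negligible here. The counting from the proof of Lemma~\ref{l: expansion} then shows that $|N(V(T))|\le(1-\beta)^3d_{i-1}r$ forces at least $\beta r/4$ bad positions.

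At a bad position $j$, the vertex $v_j$ has at least $\beta d_{i-1}/2$ neighbours in $X=L_{j-1}\cup N_{j-1}$, and $|X|\le 2d_{i-1}r\le2\theta_i n_{i-1}$. Applying the property $\EM(n,(1\pm\gamma)d,\lambda)$ to the pair $(B,X)$, where $B$ is the set of vertices with at least $\beta d_{i-1}/2$ neighbours in $X$, gives
\[
|B|\le C\beta^{-2}(\lambda/d_{i-1})^2|X|\le C'\beta^{-2}(\lambda/d_{i-1})^2 d_{i-1}r .
\]
Here we use $|X|/n_{i-1}\le 2\theta_i\ll\beta$ to absorb the main term. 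So at a bad position we bound the number of choices of $v_j$ by $|B|$, dropping the tree-adjacency constraint there. At a good position we keep the usual bound of $\Delta$ choices, coming from the parent.

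Putting this together, and choosing which positions are bad in at most $2^r$ ways, the expected number of vertices in bad components of size $r$ is at most
\[
\frac{n_{i-1}}{d_{i-1}}\,r\,\Big(C\,a\e^{1-a}\e^{O(\beta)}\Big)^{r}\Big(C'\beta^{-2}(\lambda/d_{i-1})^2\Big)^{\beta r/4}\le \frac{n_{i-1}}{d_{i-1}}\,r\,(\lambda/d_{i-1})^{\beta r/3}.
\]
In obtaining this, the factor $r^{r-2}/(r-1)!\le\e^{r}$ from Lemma~\ref{l: trees} absorbs the extra $r$ that appears at bad positions. The final inequality uses $\lambda/d_{i-1}\le\sqrt{c_1}$ with $c_1\ll\beta$. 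Summing over $r\ge1$, the expectation is at most $\theta_i^2n_{i-1}/d_{i-1}$.

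For the high-probability statement, Markov's inequality is too weak. Instead I would bound the probability that there exist vertex-disjoint bad components of total size at least $m:=\theta_i n_{i-1}/d_{i-1}$. Disjoint components have disjoint closed neighbourhoods in the relevant sense, so the probabilities of a fixed collection of trees being such components factorise up to negligible corrections. Hence the probability that some collection of $s$ bad trees with total size $m$ occurs is at most the corresponding $s$-fold product of the single-tree bounds. Summing over $s$ gives a bound of the form
\[
\sum_{s} \frac{\mu^s}{s!}\cdot (\lambda/d_{i-1})^{\beta m/6},
\]
where $\mu$ is the bound on the expected number of bad components computed above. Since $m\ge n^{4/5}$ by \eqref{eq: theta-bounds}, this is $o(n^{-4})$. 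A union bound over $i\in[k]$ finishes the proof.

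I expect the main obstacle to be the bookkeeping at bad positions. The set $B$ depends on the history $X$, and when we drop the adjacency constraint we must check that the resulting loss, namely $|B|\,p\approx(\lambda/d_{i-1})^2r$ per bad position, is still beaten by the factorial in Lemma~\ref{l: trees}. A second delicate point is the factorisation over disjoint components, since their neighbourhoods may overlap. If that step is awkward, I would instead expose one tree at a time and condition on earlier trees, which only lowers the available probabilities.
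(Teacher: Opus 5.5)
Your proposal has a genuine gap, at exactly the step you flag as the main obstacle. In a BFS-ordered tree count, the parent of each position is fixed by the shape. At a good position, $v_j$ is one of at most $\Delta\approx d_{i-1}$ neighbours of that parent. At a bad position you replace this by $|B|\le C'\beta^{-2}(\lambda/d_{i-1})^2d_{i-1}r$ choices. Each bad position therefore multiplies the count by $|B|/\Delta\approx C'\beta^{-2}(\lambda/d_{i-1})^2r$, so the honest outcome is a factor $(C'\beta^{-2}(\lambda/d_{i-1})^2r)^{\beta r/4}$. The factor $r^{r-2}/(r-1)!\le\e^r$ from Lemma~\ref{l: trees} cannot absorb $r^{\beta r/4}$: it is only exponential, and it is already spent producing the $\e$ in $a\e^{1-a}$. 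Saving a factor $r$ per bad position would require a Pr\"ufer-type count in which the parent of a bad vertex is never chosen. However, your bad positions are defined through an exploration order that depends on those parents. Summing honestly over the possible parents of $v_j$ costs $e(B_j,L_{j-1})$ rather than $|B_j|$. The mixing-lemma bound $\lambda\sqrt{|B_j||L_{j-1}|}$ on this is no better than the trivial $\Delta r$ unless $\lambda\ll d_{i-1}^{3/4}$. Hence your bound is vacuous for $r\gtrsim(d_{i-1}/\lambda)^2$. Since $(d_{i-1}/\lambda)^2\le 4d\le 4n^{1/6}$ by Lemma~\ref{l: spectral-lower}, while $\theta_in_{i-1}/d_{i-1}\ge n^{4/5}$ by \eqref{eq: theta-bounds}, this is most of the relevant range.

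There are also two smaller errors. First, a bad position only guarantees $\deg(v_j)-(1-\beta/4)d_{i-1}\le(\beta/4+\delta_{i-1})d_{i-1}$ neighbours in $X$, not $\beta d_{i-1}/2$. It guarantees nothing for vertices of degree below $(1-\beta/4)d_{i-1}$, and these are not in your $B$ at all; for $r=1$, every bad component is of this type. Second, distinct components can share outside neighbours. So the probability that several trees are all components is at least the product of the single-tree bounds, not at most. Likewise, conditioning on earlier components raises the probability for later ones rather than lowering it.

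The missing idea is that no new counting is needed. On the event of Lemma~\ref{l: expansion}, every subset of $Y_i$ with size between $\theta_in_{i-1}/d_{i-1}$ and $(1-\beta)n_{i-1}/d_{i-1}$ expands. Your aggregation of bad components into a collection of total size about $m$ is precisely such a subset. Concretely, suppose the small bad components cover more than $\theta_in_{i-1}/d_{i-1}$ vertices. Add them one at a time until the union $U$ first has at least $\theta_in_{i-1}/d_{i-1}$ vertices; then $|U|<2\theta_in_{i-1}/d_{i-1}$. On one hand, $|N_{G(i-1)}(U)|\le\sum_F|N_{G(i-1)}(F)|\le(1-\beta)^3d_{i-1}|U|$. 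On the other hand, Lemma~\ref{l: expansion} gives $|N_{G(i-1)}(U)|\ge(1-\beta)^2(1-\delta_i-2\theta_i)d_{i-1}|U|$. This is a contradiction since $\delta_i,\theta_i\ll\beta$. This is the paper's proof: the probability $1-O(n^{-3})$ is inherited directly from Lemma~\ref{l: expansion}, and neither tree counting nor correlation issues arise.
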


\begin{proof}
Work on the event from Lemma~\ref{l: expansion}. Suppose that the conclusion fails for some fixed $i$. Let $\cF$ be the family of components $F$ of $G(i-1)[Y_i]$ satisfying the two displayed conditions. Since every member of $\cF$ has order less than $\theta_in_{i-1}/d_{i-1}$, we may choose a subfamily $\cF'\subseteq\cF$ such that, writing $U=\bigcup_{F\in\cF'}V(F)$,
\[
    \theta_i\frac{n_{i-1}}{d_{i-1}}\le |U|\le 2\theta_i\frac{n_{i-1}}{d_{i-1}}.
\]
The components in $\cF'$ are pairwise non-adjacent in $G(i-1)[Y_i]$, and therefore
\[
    |N_{G(i-1)}(U)|\le \sum_{F\in\cF'}|N_{G(i-1)}(F)|\le (1-\beta)^3d_{i-1}|U|.
\]
On the other hand, since $|U|$ lies in the range of Lemma~\ref{l: expansion},
\begin{align*}
    |N_{G(i-1)}(U)|
    &\ge (1-\beta)^2\left((1-\delta_i)d_{i-1}|U|-\frac{d_{i-1}^2|U|^2}{n_{i-1}}\right) \\
    &\ge (1-\beta)^2(1-\delta_i-2\theta_i)d_{i-1}|U|.
\end{align*}
As $\delta_i,\theta_i\ll\beta$, this is larger than $(1-\beta)^3d_{i-1}|U|$, a contradiction.
\end{proof}

\begin{lemma}\label{l: excess}
With probability $1-O(n^{-3})$, for every $i\in[k]$,
\[
    \ex(G(i-1)[Y_i])=O(\eps^3n_{i-1}/d_{i-1}).
\]
\end{lemma}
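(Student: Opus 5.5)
The plan is to compute $\ex(G(i-1)[Y_i])=e(Y_i)-|Y_i|+\kappa(G(i-1)[Y_i])$ directly. I will take an upper bound on the number of edges, a lower bound on the number of vertices, and an upper bound on the number of components, each accurate to within $O(\eps^4 n_{i-1}/d_{i-1})$. The main terms should reproduce the classical computation for $G(m,a/m)$ with $a=1+\eps$ and $m=n_{i-1}$ rescaled by $1/d_{i-1}$. There the excess is $\Theta(\eps^3)\cdot n_{i-1}/d_{i-1}$, because of the identity
\[
\frac{a^2}{2}-a+\Big(\bar a-\frac{\bar a^2}{2}\Big)=-\frac12+\frac{\eps^2}{2}+\frac12-\frac{(1-\bar a)^2}{2}=O(\eps^3),
\]
using $\bar a=1-\eps+\tfrac23\eps^2+O(\eps^3)$. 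Throughout I fix $i$, condition on $G(i-1)$ satisfying Lemma~\ref{l: validity}, and work on the events of Lemmas~\ref{l: expansion} and~\ref{l: small non-expand}. I take a large constant $R=\eps^{-5}$ and use the hierarchy $\delta_i,\theta_i,\gamma\ll\beta\ll\eps$ to make every error term of order $\beta R+\delta_i R$ at most $\eps^4$.

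\textbf{Vertices and edges.} First, $\mathbb E|Y_i|=p_i|V(G(i-1))|=(1\pm 2\delta_{i-1})a\,n_{i-1}/d_{i-1}$, and Lemma~\ref{azuma} (Lipschitz constant $1$) gives concentration within $\eps^4 n_{i-1}/d_{i-1}$. Next, $\mathbb E\,e(Y_i)=p_i^2 e(G(i-1))$. By near-regularity (maximum degree $(1+\delta_{i-1})d_{i-1}$), this is at most $(1+3\delta_{i-1})\tfrac{a^2}{2}\,n_{i-1}/d_{i-1}$. For concentration I apply Lemma~\ref{azuma} with Lipschitz constant $C\le 2d_{i-1}$ and $t=\eps^4n_{i-1}/d_{i-1}$. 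The resulting exponent is $\Omega(t^2/(d_{i-1}^2 n_{i-1}p_i))=\Omega(\eps^8 n_{i-1}/d_{i-1}^3)$, which exceeds $n^{1/3}$ thanks to $d^6\le n$. Together these give $e(Y_i)-|Y_i|\le (\tfrac{a^2}{2}-a+O(\eps^4))\,n_{i-1}/d_{i-1}$.

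\textbf{Components.} I split the components of $G(i-1)[Y_i]$ into four classes and bound each.
\begin{enumerate}[(i)]
\item Components of order larger than $R$ number at most $|Y_i|/R=O(\eps^4 n_{i-1}/d_{i-1})$.
\item Components of order at most $R$ that violate $|N_{G(i-1)}(F)|\ge (1-\beta)^3d_{i-1}|V(F)|$ have at most $\theta_i n_{i-1}/d_{i-1}$ vertices in total by Lemma~\ref{l: small non-expand}. Since $\theta_i\ll\eps^4$, they contribute a negligible number of components.
\item For the remaining components of order $r\le R$, I fix a spanning tree and count. By Lemma~\ref{l: trees}, there are at most $|V(G(i-1))|\,r^{r-2}((1+\delta_{i-1})d_{i-1})^{r-1}/r!$ unrooted $r$-vertex subtrees; dividing the rooted count by $r$ accounts for root choices.
\item Each such tree $T$ is a component only if $V(T)\subseteq Y_i$ and its neighbourhood, of size at least $(1-\beta)^3d_{i-1}r$, avoids $Y_i$. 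This has probability at most $p_i^r(1-p_i)^{(1-\beta)^3 d_{i-1}r}\le (a/d_{i-1})^r\e^{-ar(1-3\beta)}$.
\end{enumerate}

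Summing the bound from (iii)--(iv) over $r\le R$ gives expected count at most
\[
(1+O(\delta_{i-1}R+\beta R))\frac{n_{i-1}}{d_{i-1}}\sum_{r\ge1}\frac{r^{r-2}}{r!}(a\e^{-a})^r=\big(\bar a-\tfrac{\bar a^2}{2}+O(\eps^4)\big)\frac{n_{i-1}}{d_{i-1}},
\]
by \eqref{eq: standard by now tool}. The number of components of order at most $R$ in $Y_i$ is concentrated by Lemma~\ref{azuma}: toggling one vertex changes it by at most $d_{i-1}+1$, so the same calculation as for the edge count applies. Adding the contributions of all four classes and combining with the edge/vertex bounds yields $\ex\le O(\eps^3)\,n_{i-1}/d_{i-1}$. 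A union bound over $i\in[k]$ finishes the argument.

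\textbf{Main obstacle.} The expected difficulty is precision. All errors must be $O(\eps^4)$ relative to the main term, which is much smaller than the $O(\eps)$ scale of $a$ itself. In particular, the expansion factor $(1-\beta)^3$ from Lemma~\ref{l: small non-expand} enters as $\e^{3a\beta r}$ for $r$ up to $R=\eps^{-5}$, so I need $\beta\ll\eps^{9}$, which the hierarchy allows. I also need to check carefully that the truncated tail $\sum_{r>R}$ of \eqref{eq: standard by now tool} is $O(\eps^4)$; this holds because $a\e^{-a}=\e^{-1}(1-\Theta(\eps^2))$ makes the terms $r^{r-2}(a\e^{-a})^r/r!$ decay like $r^{-5/2}\e^{-\Theta(\eps^2 r)}$.
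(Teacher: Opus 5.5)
Your proposal is correct and follows essentially the same route as the paper. Both arguments concentrate $|Y_i|$ and $e(G(i-1)[Y_i])$ via Lemma~\ref{azuma}, then split the components into three classes: large ones, small non-expanding ones handled by Lemma~\ref{l: small non-expand}, and small expanding ones handled by a first-moment count over spanning trees using \eqref{eq: standard by now tool} followed by bounded differences. The differences are minor refinements: truncating at the fixed size $R=\eps^{-5}$ (with $\beta\ll\eps^{9}$) instead of at $\theta_i^{-1}$ makes the perturbation factor $(1+\delta_{i-1})^{r}\e^{3a\beta r}$ trivially $1+O(\eps^4)$, and your Lipschitz constant of order $d_{i-1}$ for the component count is the right one, since adding one vertex can merge up to $\deg(v)$ small components; with $d^6\le n$ this constant still gives the required concentration.
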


\begin{proof}
Fix $i\in [k]$. Since $G(i-1)$ is a $(\delta_{i-1},d_{i-1})$-nearly regular graph on $(1\pm2\delta_{i-1})n_{i-1}$ vertices, we have $e(G(i-1))=(1+O(\delta_{i-1}))d_{i-1}n_{i-1}/2$.

Moreover, adding or removing a single vertex to/from $Y_i$ can change the value of $|V(G(i-1)[Y_i])|$ by $1$, and of $e(G(i-1)[Y_i])$ by at most $2d_{i-1}$. 
Applying Lemma~\ref{azuma} with error term $\eps^3n_{i-1}/d_{i-1}$ gives
\[\mathbb P\bigg(|V(G(i-1)[Y_i])|\neq (1\pm \eps^3)p_in_{i-1}\bigg)\le \exp\bigg(-\frac{(\eps^3 n_{i-1}/d_{i-1})^2}{2p_in_i +2(\eps^3 n_{i-1}/d_{i-1})/3}\bigg),\]
and similarly
\[\mathbb P\bigg(e(G(i-1)[Y_i])\neq (1\pm \eps^3)p_i^2e(G(i-1))\bigg)\le \exp\bigg(-\frac{(\eps^3 p_i^2 e(G(i-1)))^2}{2(2d_{i-1}^2)e(G(i-1))p_i^2 +2\cdot 2d_{i-1}(\eps^3 p_i^2 e(G(i-1)))/3}\bigg).\]
Further using that $d^6\le n$ and $p_i^2 e(G(i-1)) = \Theta(n_{i-1}/d_{i-1}) = \Theta(n/d)$, with probability $1-O(n^{-3})$,
\begin{align}
|V(G(i-1)[Y_i])|=(1+O(\eps^3))\frac{(1+\eps)n_{i-1}}{d_{i-1}}\qquad \text{and}\qquad |E(G(i-1)[Y_i])|=(1+O(\eps^3))\frac{(1+\eps)^2n_{i-1}}{2d_{i-1}}.\label{eq: vertices-edges-b}
\end{align}

It remains to bound the number of components. By Lemma~\ref{l: small non-expand}, the number of components of order less than $\theta_{i} n_{i-1}/d_{i-1}$ and vertex-neighbourhood of size at most $(1-\beta)^3d_{i-1}|V(F)|$ is at most $\theta_i n_{i-1}/d_{i-1}$.
Also, \eqref{eq: theta-bounds} implies $\theta_i n_{i-1}/d_{i-1}\ge \theta_i^{-1}$. 
Combining this with the first part of \eqref{eq: vertices-edges-b} shows that there are $O(\theta_i n_{i-1}/d_{i-1})$ components of order at least $\theta_i^{-1}$. It remains to estimate the number $\kappa_i'$ of components $F$ of $G(i-1)[Y_i]$ with
\[
    |V(F)|<\theta_i^{-1}
    \qquad\text{and}\qquad
    |N_{G(i-1)}(F)|>(1-\beta)^3d_{i-1}|F|.
\]
For a vertex $v$ and an integer $r\ge1$, Lemma~\ref{l: trees} bounds the number of $r$-vertex trees in $G(i-1)$ rooted at $v$ by
\[
    \frac{r^{r-2}((1+\delta_{i-1})d_{i-1})^{r-1}}{(r-1)!}.
\]
After summing over the choice of a root and dividing by $r$, every possible vertex set $S$ of a component is counted at most once for each tree spanning $S$. If such a tree $T$ has at least $(1-\beta)^3rd_{i-1}$ vertex-neighbours in $G(i-1)$, then the probability that its vertex set spans a component of $G(i-1)[Y_i]$ is at most
\[
    \left(\frac{1+\eps}{d_{i-1}}\right)^r\left(1-\frac{1+\eps}{d_{i-1}}\right)^{(1-\beta)^3rd_{i-1}}.
\]
Consequently, summing over all $r\ge1$ for an upper bound,
\begin{align}
    \mathbb E\kappa_i'
    &\le \sum_{r\ge1} n_{i-1}\frac{r^{r-2}((1+\delta_{i-1})d_{i-1})^{r-1}}{r!}
    \left(\frac{1+\eps}{d_{i-1}}\right)^r\left(1-\frac{1+\eps}{d_{i-1}}\right)^{(1-\beta)^3rd_{i-1}}\nonumber \\
    &\le \frac{n_{i-1}}{d_{i-1}}\sum_{r\ge1}\frac{r^{r-2}}{r!}
    \big((1+O(\delta_{i-1}))(1+\eps)\big)^r
    \exp\big(-(1+\eps)(1-O(\beta))r\big).\label{eq: kappa-prime-expectation}
\end{align}
Using \eqref{eq: standard by now tool} and applying a (by now) standard tree-function estimate (see, for example, \cite[Section 5]{DK21} and \cite[p.~8]{FKM04}), we can conclude
\begin{equation}\label{eq: component-expansion-estimate}
    \mathbb E\kappa_i'
    \le \left(\frac12-\frac{\eps^2}{2}+O(\eps^3)+O(\beta)+O(\delta_{i-1})\right)\frac{n_{i-1}}{d_{i-1}}.
\end{equation}
Moreover, changing one vertex-exposure can change $\kappa_i'$ by at most $2\theta_i^{-1}+1\le 3n^{4/5}d/n$ (where the last inequality follows from~\eqref{eq: theta-bounds}); applying Lemma~\ref{azuma} with $t=\eps^3n_{i-1}/d_{i-1}$ gives $\kappa_i'\le \mathbb E\kappa_i'+O(\eps^3n_{i-1}/d_{i-1})$ with probability $1-O(n^{-3})$.

Combining the above and using that $\theta_i,\delta_{i-1},\beta\ll\eps$, we obtain with probability $1-O(n^{-3})$ that
\begin{align*}
    \ex(G(i-1)[Y_i])
    &\le \left(\frac{(1+\eps)^2}{2}-(1+\eps)+\frac12-\frac{\eps^2}{2}+O(\eps^3)\right)\frac{n_{i-1}}{d_{i-1}} \\
    &=O(\eps^3n_{i-1}/d_{i-1}).
\end{align*}
Together with the union bound over the $k$ possible values of $i$, this proves the lemma.
\end{proof}

\subsection{DFS procedure}\label{s: DFS}
For each $i\in[k]$, we construct a long induced path $P_i$ in $G(i-1)[Y_i]$ via a modified depth-first search procedure. The algorithm maintains four sets of vertices.
\begin{itemize}
    \item $A$ is the set of available vertices not yet processed by the algorithm. Initially, $A=V(G(i-1))$.
    \item $S\subseteq Y_i$ is the stack. The vertices in $S$ are vertices in $Y_i$ explored according to a last-in-first-out rule, and $S$ will always span an induced path.
    \item $W$ is the set of 'wasted' vertices. A vertex moves to $W$ if it is explored during the algorithm and does not enter $Y_i$. Initially, $W=\emptyset$.
    \item $R\subseteq Y_i$ is the set of 'reservoir' vertices. A vertex from $Y_i$ moves to $R$ only after it is revealed to have at least two neighbours in $S\cup T$.
    \item $T\subseteq Y_i$ is the set of 'trash' vertices. A vertex from $Y_i$ moves to $T$ when it is removed from the top of the stack.
\end{itemize}

Fix an arbitrary ordering $\sigma$ of $V(G(i-1))$. During the process, we expose one vertex at a time and \textit{reveal} whether it falls into $Y_i$ (which happens with probability $p_i$). One step of the DFS process goes as follows.
\begin{enumerate}
    \item If $S=A=\emptyset$, terminate. If $S=\emptyset$ and $A\ne\emptyset$, let $v$ be the first vertex of $A$ in the ordering $\sigma$, and reveal whether $v\in Y_i$. If $v\in Y_i$, move $v$ from $A$ to $S$; otherwise, move $v$ from $A$ to $W$. Then repeat.
    \item If $S\ne\emptyset$, let $v$ be the top vertex in the stack.
    \begin{enumerate}
        \item If $e(v,A)=0$, move $v$ to $T$ and then go to (1); otherwise, go to (b).
        \item Denote by $u$ the first vertex in $A$ with respect to $\sigma$ adjacent to $v$, reveal whether $u\in Y_i$ and delete $u$ from $A$.
        %If $u\notin Y_i$, repeat after moving the maximal terminal segment $W\subseteq S$ with $e_{G(i-1)}(W,A)=0$ from $S$ to $T$.
        If $u\in Y_i$ and $e(u,(S\cup T)\setminus\{v\})\ge1$, move $u$ to $R$. If not, place $u$ on top of $S$ and repeat.
        %In both cases, move the maximal terminal segment $W\subseteq S$ with $e_{G(i-1)}(W,A)=0$ from $S$ to $T$, and repeat.
    \end{enumerate}
\end{enumerate}
Some remarks are in place. First, note that the definition of $R$ requires a stronger property than needed: for example, a vertex adjacent to the top of the stack $S$ and with a neighbour in the trash $T$ may well be pushed in the stack while preserving the property that $G(i-1)[S]$ is an induced path. 
We define $R$ as above because this way the vertices there contribute to the excess of $G(i-1)[Y_i]$ in the following sense: every vertex $u$ entering $R$ is adjacent to the top of the stack $v$ and to another already explored vertex $w$, and $v,w$ belong to the same component revealed by the DFS exploration process.
Thus, adding $u$ would create a cycle, and the inequality $|R|\le\ex(G(i-1)[Y_i])$ follows by charging each reservoir vertex at the moment it is exposed.
Second, observe that every step reveals exactly one previously unrevealed vertex, and therefore the number of steps is at most $|V(G(i-1))|$. 
Third, if we denote by $v$ the vertex at the top of the stack at some moment, since a vertex is placed on the stack only if it has no revealed neighbour in $(S\cup T)\setminus\{v\}$, the stack spans an induced path at all times. 
Finally, after a vertex enters $T$, it has no neighbours in the current available set $A$, and this remains true thereafter.

\begin{lemma}\label{lem:long}
With probability $1-O(n^{-2})$, for every $i\in[k]$, at time $s_i=\left\lfloor\eps n_{i-1}/20\right\rfloor$ of the DFS process on $G(i-1)$, the stack $S$ contains at least $\eps^3n_{i-1}/(100d_{i-1})$ vertices.
\end{lemma}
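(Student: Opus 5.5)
We follow the standard DFS argument of Krivelevich and Sudakov, adapted as in \cite{DKMZ25}, and argue by contradiction. Work on the intersection of the events of Lemma~\ref{l: validity}, Lemma~\ref{l: expansion}, Lemma~\ref{l: small non-expand} and Lemma~\ref{l: excess}, which holds with probability $1-O(n^{-3})$. Fix $i$; a union bound over $i\in[k]$ finishes the proof.

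\textbf{Step 1: the wasted set.} Every step of the process reveals exactly one vertex, independently landing in $Y_i$ with probability $p_i=a/d_{i-1}$. Hence at time $s_i=\lfloor \eps n_{i-1}/20\rfloor$ the number of revealed vertices lying in $Y_i$ is $\mathrm{Bin}(s_i,p_i)$. By Lemma~\ref{lem:Bernstein}, with probability $1-\exp(-\Omega(n/d))$ this number is $(1\pm\eps^3)a s_i/d_{i-1}$. Thus
\[
|S\cup T\cup R|\approx \frac{a\eps n_{i-1}}{20 d_{i-1}} \qquad\text{and}\qquad |W|=(1-o(1))s_i .
\]
Also $|R|\le \ex(G(i-1)[Y_i])=O(\eps^3 n_{i-1}/d_{i-1})$ by the charging remark and Lemma~\ref{l: excess}.

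\textbf{Step 2: the trash expands.} Suppose $|S|<\eps^3n_{i-1}/(100d_{i-1})$ at time $s_i$. Then
\[
|T|\ge \Big(\tfrac{a\eps}{20}-O(\eps^3)\Big)\frac{n_{i-1}}{d_{i-1}} .
\]
The key structural fact is that vertices of $T$ have no neighbours in $A$. Every neighbour of $T$ in $G(i-1)$ therefore lies in $S\cup R\cup W\cup T$, so
\[
|N(T)|\le |W|+|S|+|R|\le s_i+O\big(\eps^3 n_{i-1}/d_{i-1}\big).
\]
On the other hand, $T\subseteq Y_i$. Since $\theta_i\ll\eps$, the size of $T$ (or of a subset of it of size exactly $(a\eps/20-O(\eps^3))n_{i-1}/d_{i-1}$) lies in the range of Lemma~\ref{l: expansion}. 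Writing $\ell=|T|$, that lemma gives
\[
|N(T)|\ge (1-\beta)^2\Big((1-\delta_i)d_{i-1}\ell-\frac{d_{i-1}^2\ell^2}{n_{i-1}}\Big)\approx \Big(\frac{a\eps}{20}-\frac{a^2\eps^2}{400}\Big)n_{i-1}.
\]
To compare this with the upper bound $s_i\approx \eps n_{i-1}/20$, note that $a\eps/20-a^2\eps^2/400=\eps/20+\eps^2/20-\eps^2/400-O(\eps^3)$. This exceeds $\eps/20$ by $\Theta(\eps^2)n_{i-1}$, which dominates both the error terms $O(\beta\eps)n_{i-1}$ and $O(\eps^3 n_{i-1}/d_{i-1})$. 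This gives the contradiction.

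\textbf{Main obstacle.} The delicate point is the bookkeeping in Step 2. The neighbourhood bound uses $N(T)\subseteq W\cup S\cup R$, and we must make sure that unrevealed neighbours of trash vertices cannot exist. This is exactly the invariant that $T$ has no neighbours in $A$, guaranteed by rule (2a), since the algorithm only removes $v$ to $T$ when $e(v,A)=0$. The second subtlety is that the gain in Step 2 is only second order in $\eps$. The constants must therefore be tracked carefully so that the $O(\beta)$ losses, which are $O(\beta\eps)n_{i-1}$ after multiplying by $\ell d_{i-1}$, together with the $O(\delta_i)$ losses and the concentration errors, are all $o(\eps^2)n_{i-1}$. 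This holds by $\beta\ll\eps$ in \eqref{eq: hierarchy}. If this margin turns out to be too tight, the fallback is to choose the stopping time slightly differently, or to take a smaller subset of $T$, so as to optimise the quadratic term.
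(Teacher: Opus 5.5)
Your proposal is correct and follows essentially the same route as the paper. The paper also combines a Chernoff bound on the number of revealed $Y_i$-vertices with $|R|\le\ex(G(i-1)[Y_i])$ from Lemma~\ref{l: excess}, and derives a contradiction between $|N(T)|\le s_i$ (trash vertices have no unrevealed neighbours) and the expansion of $T$ given by Lemma~\ref{l: expansion}; the only cosmetic difference is that you track the margin additively as $\Theta(\eps^2)n_{i-1}$, whereas the paper writes it multiplicatively as $|N(T)|\ge(1+\eps/2)s_i$.
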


\begin{proof}
Fix $i\in[k]$. 
As the first $s_i$ steps of the DFS reveal $s_i$ independent Bernoulli$(p_i)$ variables, by Lemma~\ref{lem:chernoff_hyp}
\begin{equation}\label{eq: dfs-binomial}
    |S\cup R\cup T|=(1\pm\eps^2)s_ip_i\le \eps n_{i-1}/(10d_{i-1})
\end{equation}
with probability $1-O(n^{-3})$, and recall that the event 
\[|R|\le \ex(G(i-1)[Y_i])=O(\eps^3n_{i-1}/d_{i-1})=O(\eps^2s_ip_i)\]
holds with the same probability by Lemma~\ref{l: excess}. 
By a union bound, these two events hold for every $i\in [k]$ with probability $1-O(n^{-2})$; we assume these properties in what follows.
%We condition on this event as well as on the event 
%\[\ex(G(i-1)[Y_i])=O(\eps^3n_{i-1}/d_{i-1})=O(\eps^2s_i\lyu{p_i})\] guaranteed with the same probability by Lemma~\ref{l: excess} for every $i\in [k]$.
%, so $|R|=O(\eps^3n_{i-1}/d_{i-1})=O(\eps^2s_i\lyu{p_i})$.

Assume for a contradiction that, at time $s_i$, we have $|S|<\eps^2 s_ip_i/4$. 
Then, \eqref{eq: dfs-binomial} implies
\[
    |T|\ge (1-O(\eps^2))s_ip_i\ge \eps n_{i-1}/(25d_{i-1}).
\]
By combining the latter display with \eqref{eq: dfs-binomial}, $|T|$ is between $\theta_in_{i-1}/d_{i-1}$ and $(1-\beta)n_{i-1}/d_{i-1}$ and hence Lemma~\ref{l: expansion} applies with $L=T$.

Next, as already noted, every neighbour of $T$ has already been revealed by time $s_i$, and thus $|N_{G(i-1)}(T)|\le s_i$. On the other hand, Lemma~\ref{l: expansion} gives
\begin{align*}
    |N_{G(i-1)}(T)|
    &\ge (1-\beta)^2\left((1-\delta_i)d_{i-1}|T|-\frac{d_{i-1}^2|T|^2}{n_{i-1}}\right)\\
    &\ge (1-\beta)^2(1-\eps/5)d_{i-1} s_ip_i
    \ge (1+\eps/2)s_i,
\end{align*}
where we used $p_i=(1+\eps)/d_{i-1}$ and $s_i/n_{i-1}\le \eps/10$. This contradicts $|N_{G(i-1)}(T)|\le s_i$, showing that
\[
    |S|\ge \frac{\eps^2}{4}s_ip_i\ge \frac{\eps^3}{100}\frac{n_{i-1}}{d_{i-1}}.\qedhere
\]
%A union bound over $i\in[k]$ completes the proof.
\end{proof}

Assuming Lemma~\ref{lem:long} and recalling the relation $n_{i-1}/d_{i-1}=n/d$, for each $i\in[k]$, denote by $P_i$ the induced path consisting of the first
\begin{equation}\label{eq: path-length}
    r\coloneq \left\lfloor\frac{\eps^3}{100}\frac{n}{d}\right\rfloor
\end{equation}
vertices of the stack at time $s_i$ in the DFS exploration of $G(i-1)$, which in turn is an induced path of $G$.
%This is possible by Lemma~\ref{lem:long}, since $n_{i-1}/d_{i-1}=n/d$ for every $i$.

\subsection{Stitching the paths}\label{s: stitch}
For every $i\in[k]$, denote by $P_i^-$ and $P_i^+$ the initial third and the final third of $P_i$, respectively. 
For $\square\in\{+,-\}$, denote by $W_i^\square$ the set of vertices in $V(G(i-1))\setminus V(P_i)$ which have exactly one neighbour in $P_i$, and whose unique neighbour in $P_i$ belongs to $P_i^\square$.

\begin{lemma}\label{lem:many-leaves}
With probability $1-O(n^{-3})$, for every $i\in[k]$ and every $\square\in\{+,-\}$,
\[
    |W_i^\square|\ge \frac{\eps^3}{1000}n_{i-1}.
\]
\end{lemma}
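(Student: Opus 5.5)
We will work on the intersection of the events of Lemmas~\ref{l: validity}, \ref{l: expansion} and \ref{lem:long}; the lemma then holds deterministically on this event. Fix $i\in[k]$ and $\square\in\{+,-\}$, and write $Q=V(P_i^\square)$. Then $Q\subseteq Y_i$ and $|Q|=r/3=\Theta(\eps^3 n/d)$. Since $n_{i-1}/d_{i-1}=n/d$, this size lies in the range $[\theta_in_{i-1}/d_{i-1},(1-\beta)n_{i-1}/d_{i-1}]$ by \eqref{eq: theta-bounds}, provided $\eps^3/300\gg\theta_i$, which holds since $\theta_i\ll\beta\ll\eps$.

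\textbf{Step 1: $Q$ has many neighbours.} By Lemma~\ref{l: expansion},
\[
|N_{G(i-1)}(Q)|\ge(1-\beta)^2\Big((1-\delta_i)d_{i-1}|Q|-\tfrac{d_{i-1}^2|Q|^2}{n_{i-1}}\Big)\ge(1-3\beta)d_{i-1}|Q|,
\]
since $d_{i-1}|Q|/n_{i-1}=|Q|d/n=O(\eps^3)\ll\beta$. This is, up to a $(1-3\beta)$ factor, the trivial upper bound. Indeed, in $G(i-1)$ we have $\sum_{q\in Q}\deg(q)\le(1+\delta_{i-1})d_{i-1}|Q|$.

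\textbf{Step 2: double counting.} Let $N=N_{G(i-1)}(Q)\setminus V(P_i)$. We have $|N|\ge|N_{G(i-1)}(Q)|-|V(P_i)|\ge(1-4\beta)d_{i-1}|Q|$, since $|V(P_i)|=r\ll d_{i-1}|Q|$. Let $N_{\ge2}$ denote the vertices of $N$ with at least two neighbours in $Q$. Then
\[
|N|+|N_{\ge2}|\le e(Q,N)\le(1+\delta_{i-1})d_{i-1}|Q|,
\]
so $|N_{\ge 2}|\le 5\beta d_{i-1}|Q|$.

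We must also discard vertices of $N$ with a neighbour in $V(P_i)\setminus Q$. For this, apply the same argument to the whole path $V(P_i)$, which also lies in the admissible range. By Lemma~\ref{l: expansion}, $|N_{G(i-1)}(V(P_i))|\ge(1-3\beta)d_{i-1}r$, while $e(V(P_i),N_{G(i-1)}(V(P_i)))\le(1+\delta_{i-1})d_{i-1}r$. Hence at most $5\beta d_{i-1}r$ vertices outside $P_i$ have at least two neighbours in $P_i$. Every vertex of $N$ with exactly one neighbour in $P_i$ belongs to $W_i^\square$. Therefore
\[
|W_i^\square|\ge(1-4\beta)d_{i-1}\tfrac r3-5\beta d_{i-1}r\ge\tfrac{d_{i-1}r}{4}\ge\tfrac{\eps^3}{1000}n_{i-1},
\]
using $r\approx\eps^3 n/(100d)$ and $d_{i-1}n/d=n_{i-1}$. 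One also needs $r\ge\theta_in_{i-1}/d_{i-1}$, which holds as in the first paragraph.

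\textbf{Main obstacle.} The only delicate point is the applicability of Lemma~\ref{l: expansion}: the sets $Q$ and $V(P_i)$ are random, but they are subsets of $Y_i$, and that lemma holds simultaneously for \emph{all} subsets of $Y_i$ in the range. So no conditioning issue arises. The probability $1-O(n^{-3})$ is the failure probability of Lemma~\ref{l: expansion}; since $P_i$ is defined via Lemma~\ref{lem:long}, we restrict to that event as well.
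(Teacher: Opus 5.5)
Your proof is correct and follows essentially the paper's route: Lemma~\ref{l: expansion} applied to $V(P_i)$, together with double counting of the at most $(1+\delta_{i-1})d_{i-1}r$ edges at $P_i$, shows that almost all neighbours of $P_i$ have a unique neighbour on the path. The only difference is how you localise to $P_i^\square$: you apply the expansion lemma to $V(P_i^\square)$ itself, which gives $d_{i-1}r/4$. The paper instead bounds the unique neighbours attached to the other two thirds by their degree sum, which gives $d_{i-1}r/10$.

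One small slip: the hierarchy is $\beta\ll\eps$, so $O(\eps^3)\ll\beta$ is not available. Your expansion bounds should read $(1-O(\beta)-O(\eps^3))d_{i-1}|Q|$ and $(1-O(\beta)-O(\eps^3))d_{i-1}r$, which still comfortably give $|W_i^\square|\ge d_{i-1}r/4$.
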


\begin{proof}
We condition on the event from Lemma~\ref{l: expansion} (which holds with the required probability).
Since $\theta_i\ll\eps$ and $|V(P_i)|=r$, by~\eqref{eq: path-length} the set $V(P_i)$ lies in the range from Lemma~\ref{l: expansion}. Thus, we obtain
\[
    |N_{G(i-1)}(P_i)|\ge (1-O(\eps))d_{i-1}|P_i|.
\]
Moreover, the total number of edges incident with $P_i$ in $G(i-1)$ is at most $(1+\delta_i)d_{i-1}|P_i|\le 1.1 d_{i-1} |P_i|$. 
Hence, all but $O(\eps d_{i-1}|P_i|)$ vertices in $N_{G(i-1)}(P_i)$ have exactly one neighbour in $P_i$; denote this set by $N_{G(i-1)}^u(P_i)$.
%We denote by $N_{G(i-1)}^u(P_i)$ the set of vertices in $N_{G(i-1)}(P_i)$ that have a unique neighbour in $P_i$. 
%In particular, the number of such leaves is at least $0.9d_{i-1}|P_i|$ for sufficiently large $n$.

Now, each of the subpaths $P_i^-$, $P_i^+$ and $P_i\setminus(P_i^-\cup P_i^+)$ is incident with at most $(1+\delta_i)d_{i-1}|P_i|/3 < 0.4d_{i-1}|P_i|$ vertices of $N_{G(i-1)}^u(P_i)$. Since $|N_{G(i-1)}^u(P_i)|\ge (1-O(\eps))d_{i-1}|P_i|$, each of $P_i^-$ and $P_i^+$ is incident with at least $d_{i-1}|P_i|/10$ vertices of $N_{G(i-1)}^u(P_i)$. Thus, the claim follows from \eqref{eq: path-length}.
\end{proof}

\begin{lemma}\label{lem:restrict}
With probability $1-O(n^{-2})$, for every $i\in[k]$, there are sets $Z_i^-\subseteq W_i^-$ and $Z_i^+\subseteq W_i^+$, each of size at least $(\lambda/d)^{0.1}n$, such that $Z_i^-\cup Z_i^+$ has no neighbours in $Y_{i+1}\cup\cdots\cup Y_k$.
\end{lemma}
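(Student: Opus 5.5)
The plan is to expose $Y_{i+1},\dots,Y_k$ one at a time and track how much of $W_i^\square$ survives, i.e.\ has no neighbour in the sets exposed so far. Fix $i\in[k]$ and $\square\in\{+,-\}$, and work on the events of Lemmas~\ref{l: validity} and~\ref{lem:many-leaves}.

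The set $W_i^\square$ is determined by $G(i-1)$ and by the DFS exploration of $Y_i$ that produces $P_i$. The sets $Y_{j}$ with $j>i$ use fresh randomness: conditionally on $G(j-1)$, each vertex of $V(G(j-1))$ is retained independently with probability $p_j=a/d_{j-1}$. So we condition on everything up to stage $j-1$ and expose $Y_j$. We set $Z^{(i)}=W_i^\square$ and define
\[
Z^{(j)}=\{w\in Z^{(j-1)}: e(w,Y_j)=0\}\qquad\text{for } j=i+1,\dots,k,
\]
and finally $Z_i^\square=Z^{(k)}$.

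\textbf{Pruning high-degree vertices.} A vertex $w\in W_i^\square$ need not lie in $G(j-1)$, so we first control its degree into $V_{j-1}\coloneqq V(G(j-1))$. Let $B_j$ be the set of vertices $w$ with $e(w,V_{j-1})>2d_{j-1}$. Since $|V_{j-1}|=(1\pm2\delta_{j-1})n_{j-1}$, the property $\EM(n,(1\pm\gamma)d,\lambda)$ applied to $(B_j,V_{j-1})$ gives, exactly as in~\eqref{eq: high-set-bound},
\[
|B_j|=O\big(\lambda^2 n_{j-1}/d_{j-1}^2\big)=O\big((\lambda/d_{j-1})^2 n\big).
\]
Summing over $j\le k$ and using $\lambda/d_{j-1}\le\sqrt{\lambda/d}$, this total is negligible compared with $\eps^3 n_{i-1}/1000\ge \eps^3\e^{-ak}n/1000$, because $\e^{-ak}\ge(\lambda/d)^{ac_0}$ and $c_0$ is small. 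Hence we may delete $\bigcup_j B_j$ from $W_i^\square$ and still keep at least $\eps^3 n_{i-1}/2000$ vertices. Every remaining vertex has at most $2d_{j-1}$ neighbours in each $V_{j-1}$.

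\textbf{One-step survival and concentration.} Condition on the history up to stage $j-1$. Each remaining $w\in Z^{(j-1)}$ survives stage $j$ with probability at least
\[
(1-p_j)^{2d_{j-1}}\ge \e^{-3a},
\]
so $\mathbb E[|Z^{(j)}|]\ge \e^{-3a}|Z^{(j-1)}|$. The size $|Z^{(j)}|$ is a function of the independent indicators $(\mathbbm 1_{v\in Y_j})_{v\in V_{j-1}}$ that is $2d$-Lipschitz, since adding or removing one vertex of $Y_j$ changes survival only for its at most $(1+\gamma)d$ neighbours. Apply Lemma~\ref{azuma} with $m=|V_{j-1}|$, $C=2d$ and $t=\tfrac12\e^{-3a}|Z^{(j-1)}|$. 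As long as $|Z^{(j-1)}|\ge (\lambda/d)^{0.1}n$, we have $t^2=\Omega((\lambda/d)^{0.2}n^2)$, whereas $C^2mp_j=O(d^2 n_{j-1}/d_{j-1})=O(d^2)$ and $Ct=O(dn)$. By~\eqref{eq: d-small} and $\lambda\ge\sqrt d/2$, the exponent is therefore at least $n^{1/2}$, so this step fails with probability at most $\exp(-\sqrt n)$.

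Iterating over the at most $k$ stages gives $|Z_i^\square|\ge \e^{-4ak}\,\eps^3 n_{i-1}/2000\ge \e^{-5ak}\eps^3n/2000$. Since $\e^{-5ak}\ge(\lambda/d)^{5ac_0}$ and $c_0\ll1$ while $\lambda/d\le c_1\ll\eps$, this is at least $(\lambda/d)^{0.1}n$, which also keeps the iteration within the range where the concentration step applies. A union bound over $i$, $\square$ and $j$ then completes the proof.

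\textbf{Main obstacle.} The delicate point is the conditioning. The sets $G(j-1)$, and hence the sets $B_j$, depend on earlier $Y$'s, so the pruning must be done stage by stage, removing $B_j$ from $Z^{(j-1)}$ at the moment $G(j-1)$ is revealed, rather than all at once. The argument must also use the fact that $Y_j$ is genuinely independent given $G(j-1)$. The numerical bookkeeping is routine once $c_0$ is chosen small relative to $\eps$.
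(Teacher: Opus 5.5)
Your proposal is correct and follows essentially the same route as the paper: expose $Y_{i+1},\dots,Y_k$ stage by stage, discard at each stage the few vertices with too many neighbours in the current $V(G(j-1))$ using the $\EM$ property, bound the one-stage survival probability below by a constant, and apply the bounded-difference inequality with a union bound over stages. One harmless slip is that $C^2mp_j=O(d^2n_{j-1}/d_{j-1})=O(dn)$ rather than $O(d^2)$, since $n_{j-1}/d_{j-1}=n/d$; the $Ct=O(dn)$ term already makes the denominator of that order, so your $\exp(-\sqrt n)$ bound is unaffected.
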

\begin{proof}
We assume the event from Lemma~\ref{l: expansion} holds for $i\in [k]$ (this happens with probability $1-O(n^{-3})$) and construct $Z_i^+$; the construction of $Z_i^-$ is identical. For every integer $j\in [0,k]$, define
\[
    z_j=\frac{\eps^3}{2000}\e^{-2aj}n.
\]
By Lemma~\ref{lem:many-leaves}, $|W_i^+|\ge z_i$. Set $Z_{i,i}\subseteq W_i^+$ to be any subset of size $z_i$. We will recursively construct sets
\[
    Z_{i,i}\supseteq Z_{i+1,i}\supseteq\cdots\supseteq Z_{k,i}
\]
such that $|Z_{j,i}|\ge z_j$ and $Z_{j,i}$ has no neighbours in $Y_{i+1}\cup\cdots\cup Y_j$.

Suppose $Z_{j,i}$ has been constructed for some integer $j\in [i,k-1]$. Let
\[
    T_{j,i}=\{v\in Z_{j,i}:e_G(v,V(G(j)))>1.2d_j\}.
\]
By the property $\EM(n,(1\pm \gamma)d,\lambda)$, we have
\[
    1.2d_j|T_{j,i}|
    \le \frac{(1+\gamma)^2d}{(1-\gamma)n}|T_{j,i}||V(G(j))|+\frac{1+\gamma}{1-\gamma}\lambda\sqrt{|T_{j,i}||V(G(j))|}
    \le 1.1d_j|T_{j,i}|+\lambda\sqrt{2n_j|T_{j,i}|},
\]
and therefore
\[
    |T_{j,i}|\le 200\left(\frac{\lambda}{d_j}\right)^2n_j
    \le 200\left(\frac{\lambda}{d}\right)^2\e^{aj}n.
\]
Since $j\le k$ and $\e^{ak}\le (d/\lambda)^{2c_0}$ with $c_0\ll 1$, this is at most $\eps z_j$.

By definition, every vertex in $Z_{j,i}\setminus T_{j,i}$ has at most $1.2d_j$ neighbours in $G(j)$. 
Denote by $X$ the number of vertices of $Z_{j,i}\setminus T_{j,i}$ which have no neighbour in $Y_{j+1}$. Then, using $p_{j+1}=(1+\eps)/d_j$ and $\eps\ll 1$, we have
\[
    \mathbb E[X] \ge (1-\eps)z_j(1-p_{j+1})^{1.2d_j}\ge (1-2\eps)z_j\e^{-1.3}\ge 2z_{j+1},
\]
where the expectation is taken over the choice of $Y_{j+1}$.
Since $G$ is a $d$-regular graph, $X$ is a $d$-Lipschitz function of the random set $Y_{j+1}\subseteq V(G(j))$. Hence, by Lemma~\ref{azuma} and recalling that $d^6\le n$, that $|V(G(j))|=(1\pm 2\delta_j)n_j$ and the choice of $c_0$,
\begin{align*}
    \mathbb P(|X-\mathbb E[X]|\ge z_{j+1})&\le \exp\bigg(-\frac{z_{j+1}^2}{2d^2|V(G(j))|p_{j+1}+2dz_{j+1}/3}\bigg)\le\exp\bigg(-\frac{z_{j+1}^2}{2d^2\cdot 2n/d+2dz_{j+1}/3}\bigg)\\
    &\le \exp\bigg(-\frac{z_{j+1}^2}{5dn}\bigg)\le \exp\bigg(-\frac{\eps^6 \e^{-4ak} n}{5\cdot 2000^2 d}\bigg)\le \e^{-\sqrt{n}}.
\end{align*}
Taking $Z_{j+1,i}$ to be the set of any $z_{j+1}$ surviving vertices completes the induction.
A union bound over the $O(k^2)=o(n)$ pairs $(i,j)$ proves the assertion. Finally, by the choice of $c_0\ll c_1\ll 1$,
\[
    z_k=\frac{\eps^3}{2000}\e^{-2ak}n\ge \bigg(\frac{\lambda}{d}\bigg)^{0.1}n.\qedhere
\]
\end{proof}

\subsection{\texorpdfstring{Proof of \Cref{th: main} for $d^6\le n$}{Proof of Theorem~1.1 for sparse graphs}}\label{s: proof_d<<n}

We work on the intersection of the events from Lemmas~\ref{l: validity}, \ref{l: expansion}, \ref{l: excess}, \ref{lem:long} and~\ref{lem:restrict}, which has positive probability. Thus, all objects constructed above may be fixed deterministically.

Consider indices modulo $k$. To finish the construction, we find edges $e_1,\ldots,e_k$ at distance at least two from each other such that, for every $i\in [k]$, $e_i$ goes between $Z_i^+$ and $Z_{i+1}^-$.
Our construction is iterative, namely, suppose that, for some $i\in [1,k-1]$, we have already chosen edges $e_1,\ldots,e_i$ with the desired property.
Let
\[
    A_i=Z_{i+1}^+\setminus N_G[e_1\cup\cdots\cup e_i],
    \qquad
    B_i=Z_{i+2}^-\setminus N_G[e_1\cup\cdots\cup e_i].
\]
We note that the closed neighbourhood above has size at most $2i(d+1)\le 2k(d+1) = o((\lambda/d)^{0.1}n)$. 
%By Lemma~\ref{l: spectral-lower}, $\lambda/d\ge d^{-1/2}/2$ for large $n$, and by \eqref{eq: d-small},
%\[
%    3dt\le 3dk\ll (\lambda/d)^{0.1}n.
%\]
%Since the closed neighbourhood of the $2t$ endpoints chosen so far has size at most $3dt$, 
Recalling that the sets $Z_{i+1}^+,Z_{i+2}^-$ have size at least $(\lambda/d)^{0.1}n$, it follows that $|A_i|,|B_i|\ge (\lambda/d)^{0.1}n/2$. 
Therefore, the property $\EM(n,(1\pm \gamma)d,\lambda)$ gives $e(A_i,B_i)>0$ since
\[
    \frac{(1+\gamma)^2d}{(1-\gamma)n}|A_i||B_i|>\frac{1+\gamma}{1-\gamma}\lambda\sqrt{|A_i||B_i|}
\]
whenever $c_1$ is sufficiently small. Choose such an edge and call it $e_{i+1}$. This greedy procedure yields an induced matching of connecting edges $e_i\in E_G(Z_i^+,Z_{i+1}^-)$ for all $i\in[k]$.

We justify that this suffices to provide the desired long induced cycle.
For every $i\in [k]$, each endpoint of $e_i$ has a unique neighbour in the appropriate end segment of $P_i$ or $P_{i+1}$, and has no neighbours in any other path $P_j$: for earlier paths, this follows from the fact that $V(G(i))$ at distance more than 1 from $Y_1\cup\ldots\cup Y_{i-1}$, and for later paths, it follows from Lemma~\ref{lem:restrict}. 
The connecting edges themselves form an induced matching. 
Hence, the union over $i\in [k]$ of the subpath of $P_i$ between the neighbour of $e_{i-1}$ (contained in $P_i^-$) and the neighbour of $e_i$ (contained in $P_i^+$) with the edges $e_1,\ldots,e_k$ forms an induced cycle. 
Moreover, for each $i$, the used subpath of $P_i$ has length at least $|P_i|/3$. Therefore the length of the resulting induced cycle is at least
\[
    \frac{k r}{3}
    \ge \frac{c_0\eps^3}{400}\frac{n\log(d/\lambda)}{d}
\]
for all sufficiently large $n$. Taking $c_2\le c_0\eps^3/400$ completes the proof of \Cref{th: main} when $d^6\le n$.

\subsection{Extension to the full range of \texorpdfstring{$d=d(n)$}{d=d(n)}}\label{s: full d}

We have shown that every $(n, (1\pm\gamma)d, \lambda)$-graph with $\gamma=\left(\frac{\lambda}{d}\right)^{1/3}$, $d^6\le n$ and $\lambda\le c_1 d$, contains an induced cycle of length at least
\[
    c_2\frac{n\log(d/\lambda)}{d}.
\]
We aim to extend this result to the full range of $d=d(n)$. We use this result as the base case and proceeds iteratively, establishing the result for progressively larger values of $d$. The first key argument is as follows.

\begin{lemma}\label{lem: iteration_1}
Fix constants $s\in [1/6,1)$ and $c_2\ll c_1,c_3\ll 1$.
Assume that, for all large enough $d\le n^s$, 
every $\left(n,\left(1\pm c_3(\frac{\lambda}{d})^{1/3}\right)d,\lambda\right)$-graph with $\lambda\le c_1 d$ contains an induced cycle of length at least
    \[
        c_2\frac{n\log(d/\lambda)}{d}.
    \] 
Then, for all large enough $d\le n^{1/2+s/2}$, every $\left(n,\left(1\pm \frac{c_3}{2}(\frac{\lambda}{d})^{1/3}\right)d,\lambda\right)$-graph $G$ with $\lambda\le \frac{c_1}{11} d$ contains an induced cycle of length at least
    \[
        \frac{c_2}{100}\cdot \frac{n\log\left(d/\lambda\right)}{d}.
    \] 
\end{lemma}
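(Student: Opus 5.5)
The plan is to pass to a uniformly random induced subgraph $G[X]$, with $|X|=\sigma n$, in which the degree has dropped below the inductive threshold, and then apply the hypothesis there. Any induced cycle of $G[X]$ is an induced cycle of $G$, so it suffices to show that $G[X]$ is typically an $\left(\sigma n,(1\pm c_3(\lambda'/d')^{1/3})d',\lambda'\right)$-graph with $d'\approx\sigma d$, $\lambda'\le 11\sigma\lambda$ (roughly) and $\lambda'\le c_1 d'$. The tool is the quantitative random-induced-subgraph theorem \Cref{thm:patched} announced in the outline: for a uniformly random $X$ of size $\sigma n$, $G[X]$ has all degrees $\sigma d\pm$ (small error) and all nontrivial eigenvalues of absolute value $O(\sigma\lambda)$. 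Inside $G[X]$ we then have
\[
\frac{\sigma n\log(d'/\lambda')}{d'}\ge \frac{n\log(d/(11\lambda))}{(1+o(1))d},
\]
and since $\lambda\le c_1 d/11$ with $c_1$ small, $\log(d/(11\lambda))\ge \tfrac12\log(d/\lambda)$. This gives the bound with $c_2/100$, with room to spare.

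\textbf{Step 1 (choice of $\sigma$).} Given $d\le n^{1/2+s/2}$, we need $\sigma d\le(\sigma n)^s$, i.e.\ $\sigma^{1-s}\le n^s/d$. If $d\le n^s$ there is nothing to do: take $\sigma=1$, since the hypothesis on $\gamma$ and $\lambda$ is weaker. Otherwise set $\sigma=(n^s/d)^{1/(1-s)}$. Then $d\le n^{1/2+s/2}$ gives $\sigma d=d^{-s/(1-s)}n^{s/(1-s)}\ge n^{s/2}\to\infty$, so the new degree is large. The error terms in the degree of a vertex in $X$ are hypergeometric with mean $\sigma\deg(v)$. By \Cref{lem:chernoff_hyp} and a union bound over $n$ vertices, every degree is $\sigma\deg(v)\pm O(\sqrt{\sigma d\log n})$. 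This relative error, $O(\sqrt{\log n/(\sigma d)})$, must be at most $\frac{c_3}{2}(\lambda'/d')^{1/3}$. Since $\lambda'\gtrsim\sqrt{d'}$ by \Cref{l: spectral-lower}, it suffices that $(\sigma d)^{-1/2}\log^{1/2}n\ll(\sigma d)^{-1/6}$, which holds because $\sigma d\ge n^{s/2}$. Combined with the original $\frac{c_3}{2}(\lambda/d)^{1/3}$ spread, which transfers as $\frac{c_3}{2}(\sigma\lambda/\sigma d)^{1/3}\le\frac{c_3}{2}(\lambda'/d')^{1/3}$ up to constants, the total degree spread stays below $c_3(\lambda'/d')^{1/3}$.

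\textbf{Step 2 (spectral control).} This is the main obstacle. We must show that $\max_{j\ge2}|\lambda_j(G[X])|\le C\sigma\lambda$ with $C\le 11$, including the lower-order fluctuation terms, for this range of $\sigma$ and $d$. The approach is to write the adjacency matrix of $G$ as
\[
A=\frac{d}{n}J+(A-\tfrac dnJ)+\text{(irregularity correction)}.
\]
Restricting to $X$ leaves the first term rank one with eigenvalue $\approx\sigma d$. The second term has operator norm at most $\lambda+O(\gamma d)$, and its random principal submatrix has norm $\sigma\lambda+O(\sqrt{\sigma}\,\cdot)$ fluctuations, by the row-and-column extraction estimates behind \Cref{thm:patched}. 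One has to check that these fluctuations, of order $\sqrt{\sigma d}\log n$, and the perturbation coming from degree irregularity, of order $\gamma\sigma d=O(\sigma d^{2/3}\lambda^{1/3})$, are both at most $10\sigma\lambda$. The first uses $\lambda\ge\sqrt d/2$ and $\sigma d\to\infty$. The second follows from the $c_3$-hypothesis, but only via Weyl interlacing together with a careful treatment of the top eigenvector; this is the delicate part, and the factor $11$ in $\lambda\le\frac{c_1}{11}d$ is exactly budget for it.

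\textbf{Step 3 (conclusion).} On the event that Steps 1 and 2 hold, which has positive probability, $G[X]$ satisfies the hypothesis with $\sigma n$ vertices, degree $d'\le(\sigma n)^s$ and $\lambda'\le c_1d'$. Apply the assumed statement to $G[X]$ and lift the induced cycle back to $G$.
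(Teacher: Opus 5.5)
There is a genuine gap in Step~2, and its conclusion $\max_{j\ge2}|\lambda_j(G[X])|\le 11\sigma\lambda$ is false in general. Your choice of $\sigma$ and your degree concentration in Step~1 are fine. However, $G[X]$ has degrees about $\sigma d=o(\sigma n)$, so Lemma~\ref{l: spectral-lower} applied to $G[X]$ gives $\max_{j\ge2}|\lambda_j(G[X])|\ge\frac12\sqrt{\sigma d}$. This exceeds $11\sigma\lambda$ as soon as $\lambda<\sqrt{d/\sigma}/22$. For $d$ near $n^{1/2+s/2}$ your $\sigma=(n^s/d)^{1/(1-s)}$ is as small as $n^{-1/2}$. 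Hence any graph with $\lambda=O(\sqrt d)$ is a counterexample, e.g.\ a random $d$-regular graph, which the lemma must cover.

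The concrete error is your claim that fluctuations of order $\sqrt{\sigma d}\log n$ are at most $10\sigma\lambda$ ``using $\lambda\ge\sqrt d/2$ and $\sigma d\to\infty$''. That inequality is equivalent to $\lambda\gtrsim\sqrt{d/\sigma}\log n$, which does not follow from $\lambda\ge\sqrt d/2$ once $\sigma\to0$. It is exactly the hypothesis $\sigma\lambda\ge C\sqrt{\sigma d\log m}$ of Theorem~\ref{thm:patched}, which you never verify.

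Separately, your irregularity term $\gamma\sigma d=\frac{c_3}{2}\sigma d^{2/3}\lambda^{1/3}$ is much larger than $\sigma\lambda$ unless $\lambda\gtrsim c_3^{3/2}d$. So no additive Weyl-type perturbation bound can absorb it. Theorem~\ref{thm:patched} avoids this by working with $D^{-1/2}A_GD^{-1/2}$, so irregularity only costs factors of $1+\gamma$.

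The missing idea is to inflate the spectral parameter before sampling. $G$ is also an $(n,(1\pm\gamma)d,\lambda')$-graph for $\lambda'=\max\{\lambda,n^{-1/50}d\}$ and $\gamma=\frac{c_3}{2}(\lambda'/d)^{1/3}$. Since $\sigma d\ge n^{s/2}\ge n^{1/12}$, both hypotheses $\sigma\lambda'\ge C\sqrt{\sigma d\log m}$ and $\sigma d\ge C\gamma^{-2}\log m$ of Theorem~\ref{thm:patched} then hold; this is where $s\ge 1/6$ is used. Hence $G[X]$ is an $(m,(1\pm2\gamma)\sigma d,11\sigma\lambda')$-graph, with $11\sigma\lambda'\le c_1\sigma d$ and $2\gamma\le c_3(11\sigma\lambda'/\sigma d)^{1/3}$. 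The inductive hypothesis then gives an induced cycle of length $c_2 n\log(d/11\lambda')/d\ge\frac{c_2}{2}n\log(d/\lambda')/d$.

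When $\lambda'>\lambda$, one only has $\log(d/\lambda')=\frac1{50}\log n\ge\frac1{50}\log(d/\lambda)$. This $\frac1{50}$, together with the $\frac12$, is exactly the $c_2/100$ in the statement. So the factor $100$ is not room to spare. Your estimate $\log(d'/\lambda')\ge\log(d/(11\lambda))$ does not hold in general, because for small $\lambda$ the subgraph's spectral parameter is forced up to order $\sqrt{\sigma d}$. With $\lambda'$ in place of $\lambda$ (and simply invoking Theorem~\ref{thm:patched}), the rest of your plan goes through.
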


Our proof of Lemma~\ref{lem: iteration_1} relies on the following theorem which is a parameter-optimised variant of an induced-subgraph inheritance theorem of Ferber, Han, Mao, and Vershynin \cite[Theorem 6.2]{FHMV25}. Roughly speaking, their result shows that random induced subgraphs of pseudorandom graphs inherit pseudorandomness with high probability.

\begin{theorem}\label{thm:patched} 
There is an absolute constant $C>0$ such that the following holds for all sufficiently large $n$.
Fix $d,\lambda>0$, $\gamma\in(0,1/200]$ and $\sigma\in [C/n,1)$ with $m=\lfloor \sigma n\rfloor \in\mathbb N$, and an $(n,(1\pm\gamma)d,\lambda)$-graph $G$. 
Consider a uniformly random subset $X$ of $[n]$ of size $m$, and let $H:=G[X]$. Assume
\begin{equation}\label{eq:patched-hypotheses}
        \sigma d\ge C\gamma^{-2}\log m
        \qquad \text{and}\qquad
        \sigma\lambda\ge C\sqrt{\sigma d\log m}.
\end{equation}
Then, with probability at least $1-m^{-1/100}$, $H$ is an $(m,(1\pm2\gamma)\sigma d,11\sigma\lambda)$-graph.
\end{theorem}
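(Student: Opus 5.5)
We must show three things for $H=G[X]$: degrees in $[(1-2\gamma)\sigma d,(1+2\gamma)\sigma d]$, and every non-top eigenvalue of $A_H$ bounded by $11\sigma\lambda$ in absolute value. The plan is to handle the degrees by hypergeometric concentration and the spectrum by a decomposition of the adjacency matrix, followed by a random-submatrix norm estimate.

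\emph{Degrees.} For a fixed $v$, $\deg_H(v)=|N_G(v)\cap X|$ is hypergeometric with mean $\frac{m}{n}\deg_G(v)=(1\pm\gamma)\sigma d\,(1\pm 1/m)$. By Lemma~\ref{lem:chernoff_hyp} with $t=\tfrac{\gamma}{2}\sigma d$, the failure probability is $\exp(-\Omega(\gamma^2\sigma d))$. The first hypothesis in \eqref{eq:patched-hypotheses}, with $C$ large, makes this at most $m^{-3}$. A union bound over the $m$ vertices of $X$ then gives all degrees in $(1\pm 2\gamma)\sigma d$ with probability $1-m^{-2}$.

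\emph{Spectrum.} Let $A$ be the adjacency matrix of $G$, with top eigenpair $(\lambda_1,u)$, and write $A=\lambda_1 uu^\top+E$, where $\|E\|\le\lambda$. Since $G$ is almost regular, $u$ is close to $\mathbf 1/\sqrt n$. More precisely, one can show $\|u-\mathbf 1/\sqrt n\|_2=O(\gamma)$ and $\|\sqrt n\,u\|_\infty=O(1)$; the latter follows from $Au=\lambda_1u$ together with the degree bounds. Then $A_H=\lambda_1 u_Xu_X^\top+E_{XX}$, which is a rank-one matrix plus $E_{XX}$. By Weyl interlacing, every eigenvalue of $A_H$ except the top one is at most $\|E_{XX}\|$ in absolute value. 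It therefore suffices to prove $\|E_{XX}\|\le 11\sigma\lambda$ with probability $1-m^{-1/100}$.

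\emph{Random principal submatrix.} This is the main obstacle. The naive bound is only $\|E_{XX}\|\le\|E\|\le\lambda$, and we need the gain of a factor $\sigma$. I would use the standard decoupling/extraction bound for random principal submatrices (Rudelson--Vershynin / Tropp), after first passing to Bernoulli$(\sigma)$ selection via Lemma~\ref{lem:Hoeffding} or a direct coupling. That bound reads
\[
\mathbb E\|E_{XX}\|\lesssim \sigma\|E\|+\sqrt{\sigma\log m}\,\|E\|_{1\to2}+\log m\,\|E\|_{\max},
\]
where the middle term is the column-norm contribution, improved after decoupling to the norm of the restricted columns.

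The three terms are bounded as follows.
\begin{itemize}
\item The first term is $\sigma\lambda$.
\item The column norms satisfy $\|E e_v\|_2^2\le \deg(v)+O(d^2/n)=O(d)$. After restricting to $X$, they become $O(\sigma d)$ with high probability, again by the first hypothesis. This makes the middle term $O(\sqrt{\sigma d\log m})$, which is $\le\sigma\lambda/C'$ by the second hypothesis.
\item The entries satisfy $\|E\|_{\max}\le 1+O(d/n)$, so the last term is $O(\log m)$. This is also $\le\sigma\lambda/C'$, since $\sigma\lambda\ge C\sqrt{\sigma d\log m}\ge C\log m$, using $\sigma d\ge\log m$.
\end{itemize}
The remaining care is in the constants: one must verify $10\sigma\lambda$ plus lower-order terms, which needs the sharp version of the decoupling inequality rather than an unspecified $\lesssim$. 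Finally, Markov's inequality on a high moment (or a matrix Bernstein tail) converts the expectation bound into probability $1-m^{-1/100}$.
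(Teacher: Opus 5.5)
Your architecture matches the paper's: hypergeometric concentration for the degrees, a rank-one subtraction, Tropp's decoupling and column-extraction bounds for a random principal submatrix with restricted column norms, and Markov on a $p=K\log m$ moment. Your rank-one reduction is different, and cleaner. The paper normalises to $B=D^{-1/2}A_GD^{-1/2}-a^{-1}D^{1/2}\mathbf 1\mathbf 1^\top D^{1/2}$ and bounds $\|B\|$, $\|B\|_\infty$, $\|B\|_{1\to2}$. It then returns to $A_H$ via Eckart--Young and the factor $\|D^{1/2}\|^2$, and uses Perron--Frobenius to identify $s_2(A_H)$ with $\max\{|\lambda_2|,|\lambda_m|\}$. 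You subtract the true top eigenprojection, $E=A-\lambda_1uu^\top$ with $\|E\|\le\lambda$ exactly. Weyl's inequality for the positive semidefinite rank-one term then gives $\lambda_2(A_H)\le\lambda_1(E_{XX})$ and $\lambda_m(A_H)\ge\lambda_m(E_{XX})$ directly, with no $(1+\gamma)^{O(1)}$ losses.

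The gap is the step you defer. The theorem asserts the explicit constant $11$, but with an unspecified $\lesssim$ your argument only yields $C'\sigma\lambda$ for some absolute $C'$. Supplying that constant is exactly the technical content of the paper's proof. Proposition~\ref{prop:fixed68}, built from Lemmas~\ref{lem:compression}, \ref{lem:decoupling} and \ref{lem:selected-column}, gives for symmetric $A$, including a non-zero diagonal,
$(\mathbb E\|P_RAP_R\|^p)^{1/p}\le 24\sqrt{2Mq/n}\,\|A\|_{1\to2}+50q\|A\|_\infty+4M\|A\|/n$.

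Plugging in your $E$ with $p=q=K\log m$ gives a moment bound $\Psi\le 4\sigma\lambda+O(\sqrt{K\sigma d\log m}+K\log m)$. Markov at level $2\Psi$, together with $\log m\le\sigma\lambda/C$ and $C\gg K$, then gives $\|E_{XX}\|\le 9\sigma\lambda$ with probability $1-2^{-p}$. The version allowing a diagonal matters here, since $E$ has diagonal entries $-\lambda_1u_v^2\ne0$.

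Three smaller points:
\begin{itemize}
\item Your claims $\|\sqrt n\,u\|_\infty=O(1)$ and $\|E\|_{\max}\le 1+O(d/n)$ do not follow from $Au=\lambda_1u$ and the degree bounds, but you do not need them. Cauchy--Schwarz gives $\lambda_1u_v=\sum_{w\sim v}u_w\le\sqrt{\deg v}$, hence $0\le\lambda_1u_vu_w\le(1+\gamma)/(1-\gamma)$. This yields $\|E\|_{\max}\le1.02$ and $\|Ee_v\|_2\le\sqrt{\deg v}+\lambda_1u_v\le2\sqrt{\deg v}$, which is all your three-term bound uses.
\item Lemma~\ref{lem:Hoeffding} concerns scalar sums and does not transfer operator-norm bounds from Bernoulli to uniform sampling.
\item Either couple the two samplings and use monotonicity of principal submatrix norms (Lemma~\ref{lem:operator}(ii)), or, as the paper does, work with uniform subsets throughout.
\end{itemize}
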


Note that the theorem becomes trivial once $\lambda$ is very close to $d$. However, we recall that, when applying the theorem, we will have in fact that $\lambda<c_1d$ where $c_1\ll 1$.

The proof of~\Cref{thm:patched} is quite technical and provided in~\Cref{s: generalisation of FHMV}. Our contribution in the proof of~\Cref{thm:patched} is to sharpen the quantitative form needed for our bootstrapping argument: using an improved operator-norm estimate for random submatrices, we obtain the same inheritance conclusion for a wider range of degrees and spectral parameters.

%We observe that if a graph $G$ is an $(n,(1\pm\gamma)d,\lambda)$-graph, then it is also an $(n,(1\pm\gamma)d,\lambda')$-graph for $\lambda'\ge\lambda$, hence we can apply \Cref{thm:patched} on $G$ with $\lambda'$ instead of $\lambda$. We use this observation in the proof of Lemma~\ref{lem: iteration_1}.

\begin{proof}[Proof of Lemma \ref{lem: iteration_1}]
First, we may (and do) assume that $d\in [n^s,n^{1/2+s/2}]$, as the case where $d\le n^s$ is covered by the assumption of the lemma.
Fix a graph $G$ as described in the statement, set 
\[\lambda' = \max\{\lambda, n^{-1/50}d\},\qquad p\coloneqq \frac{n^{s/2}}{d}\qquad\text{and}\qquad \gamma=\frac{c_3}{2}\left(\frac{\lambda'}{d}\right)^{1/3},\] 
and note that $G$ is an $\left(n,\left(1\pm \gamma\right)d,\lambda'\right)$-graph such that $pd\ge C\gamma^{-2}\log (np)$ and $p\lambda'\ge C\sqrt{pd\log (np)}$, where the constant $C>0$ is the one from Theorem~\ref{thm:patched}.
Consider a uniformly chosen set $X\subseteq V(G)$ of size $|X|=pn$. 
Then, whp $G[X]$ is a $\left(pn, (1\pm 2\gamma)pd, 11p\lambda' \right)$-graph where, using that $d\le n^{1/2+s/2}$ and $\lambda'\le c_1d/11$ for large $n$,
\[pd \le (pn)^{s},\qquad 11p\lambda'\le c_1 pd\qquad \text{and}\qquad 2\gamma=c_3\left(\frac{\lambda'}{d}\right)^{1/3}\le c_3\left(\frac{11\lambda' p}{dp}\right)^{1/3}.\]
Hence, using that $d/\lambda'\ge n^{1/50}\ge (d/\lambda)^{1/50}$ when $\lambda'>\lambda$ and choosing $c_1$ small constant, $G[X]$ (and therefore $G$ as well) contains an induced cycle of length at least
    \begin{align*}
        c_2\frac{pn\log(pd/11p\lambda')}{pd}&=c_2\frac{n\log(d/11\lambda')}{d}\ge \frac{c_2}{2}\cdot \frac{n\log(d/\lambda')}{d}\ge \frac{c_2}{100}\cdot \frac{n\log(d/\lambda)}{d}.\qedhere
    \end{align*}
\end{proof}

The following corollary is an iterated version of the previous lemma.
\begin{corollary}\label{lem: concluding iterations}
Fix constants $c_2\ll c_1,c_3\ll 1$. 
For every integer $t\ge 0$ and large enough integers $d,n$ with $d\le n^{1-5/(6\cdot 2^t)}$, every $\left(n,\left(1\pm \frac{c_3}{2^t}(\frac{\lambda}{d})^{1/3}\right)d,\lambda\right)$-graph $G$ with $\lambda\le c_1 d/11^t$ contains an induced cycle of length at least
    \[
        \frac{c_2}{100^t}\cdot \frac{n\log(d/\lambda)}{d}.
    \]
%, the following holds. Let $n,d$ be sufficiently large integers such that $d\le n^{1-\frac{9}{10}\cdot \left(\frac{1}{2}\right)^t}$. Then, every $\left(n,\left(d,\lambda\right)$-graph $G$ with $\lambda\le \frac{c_1}{6^t} d$ contains an induced cycle of length at least
%    
\end{corollary}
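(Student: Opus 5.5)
We argue by induction on $t$, with Lemma~\ref{lem: iteration_1} supplying the inductive step.

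\textbf{Base case $t=0$.} The exponent is $1-5/6=1/6$, so the condition reads $d\le n^{1/6}$, i.e.\ $d^6\le n$. The statement is then exactly what was established in Sections~\ref{s: setup}--\ref{s: proof_d<<n}: every $\left(n,\left(1\pm c_3(\lambda/d)^{1/3}\right)d,\lambda\right)$-graph with $\lambda\le c_1d$ has an induced cycle of length $c_2 n\log(d/\lambda)/d$. Strictly, that argument was run with $\gamma=(\lambda/d)^{1/3}$, i.e.\ $c_3=1$. The only place $\gamma$ entered was \eqref{eq:gamma}, $100\gamma\le\min_i\delta_i$, and this still holds for $c_3\le 1$. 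So the base case holds with constants $c_1,c_2,c_3$ satisfying the hierarchy $c_2\ll c_1,c_3\ll1$.

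\textbf{Inductive step.} Assume the statement for $t$, with constants $c_1^{(t)}=c_1/11^t$, $c_2^{(t)}=c_2/100^t$, $c_3^{(t)}=c_3/2^t$ and degree range $d\le n^{s_t}$, where $s_t=1-5/(6\cdot 2^t)$. Note that $s_t\in[1/6,1)$. We apply Lemma~\ref{lem: iteration_1} with $s=s_t$ and these constants. It gives the statement for degrees
\[
d\le n^{1/2+s_t/2},\qquad \tfrac12+\tfrac{s_t}2=1-\tfrac{5}{6\cdot 2^{t+1}}=s_{t+1},
\]
with constants $c_1^{(t)}/11=c_1/11^{t+1}$, $c_2^{(t)}/100=c_2/100^{t+1}$ and $c_3^{(t)}/2=c_3/2^{t+1}$. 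This is exactly the statement for $t+1$.

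\textbf{Main obstacle: the constant hierarchy.} Lemma~\ref{lem: iteration_1} requires $c_2^{(t)}\ll c_1^{(t)},c_3^{(t)}\ll1$ at every level. Since $t$ is a fixed integer and the rescaled constants change only by fixed factors $100^{-t}$, $11^{-t}$, $2^{-t}$, we will check that the ratio $c_2^{(t)}/\min\{c_1^{(t)},c_3^{(t)}\}=(c_2/\min\{c_1,c_3\})\cdot(\text{factor}\le 1)$ only improves with $t$. Indeed $100^{-t}\le 11^{-t},2^{-t}$.

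A cleaner alternative is to observe that the proof of Lemma~\ref{lem: iteration_1} actually uses $c_2$ only through the inherited hypothesis. In that case the hierarchy condition is needed only for the initial choice of $c_1,c_3$ small, and it holds throughout.

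We also need $n,d$ large enough at every stage. Since each level uses finitely many applications of Theorem~\ref{thm:patched}, and "large enough" thresholds depend only on $t$ and the constants, we simply take the maximum of the thresholds over the levels $0,\dots,t$.
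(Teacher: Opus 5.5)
Your proof is correct and follows the paper's argument: induction on $t$, with Lemma~\ref{lem: iteration_1} applied at $s=1-5/(6\cdot 2^{t})$ and constants $c_1/11^t$, $c_2/100^t$, $c_3/2^t$, using the identity $\tfrac12+\tfrac s2=1-5/(6\cdot 2^{t+1})$. Your extra remarks on the constant hierarchy and the size thresholds, which the paper leaves implicit, are fine. For the base case it is cleaner to argue by monotonicity: taking $c_3\le 1$ only shrinks the class of graphs already covered by the sparse-case argument, which was run with $c_3=1$. This avoids needing to claim that $\gamma$ enters that argument only through \eqref{eq:gamma}.
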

\begin{proof}
We prove the statement by induction on $t$. The case $t=0$ follows from the results proved in Sections~\ref{s: setup}-\ref{s: proof_d<<n}.
Assume the statement holds with $t-1$ for some $t\ge 1$. 
By the induction hypothesis, whenever $d\le n^{1-5/(6\cdot 2^{t-1})}$, the result of the lemma holds. Hence, by applying Lemma~\ref{lem: iteration_1} with 
$s \coloneq 1-5/(6\cdot 2^{t-1})$ and $c_1/11^{t-1}, c_2/100^{t-1}, c_3/2^{t-1}$ replacing $c_1,c_2,c_3$, respectively, whenever $d\le n^s$ and $\lambda\le c_1d/11^t$, we have that $G$ contains an induced cycle of length at least
    \[
        \frac{c_2}{100^t}\cdot \frac{n\log(d/\lambda)}{d}.
    \]
It remains to observe that $1/2+s/2=1-5/(6\cdot 2^t)$, which implies the corollary.
\end{proof}

The following lemma extends the construction of a long induced cycle to the full range of $d=d(n)$, for small values of $\lambda$ in terms of $n,d$. Its proof uses a very different approach and is based on deterministically constructing an induced path in $G$ vertex-by-vertex and finally closing it into an induced cycle.

\begin{lemma}\label{lem: one by one}
For all $\alpha\in (0,1)$, there are constants $c_2\ll c_1,c_3\ll \alpha$ such that, for all large enough $d,n$ with $d\le \alpha n$, every $\left(n,\left(1\pm c_3(\frac{\lambda}{d})^{1/3}\right) d,\lambda\right)$-graph $G$ with $\lambda\le c_1 \left(\frac{d}{n}\right)^{5/4}\cdot d$ contains an induced cycle of length at least
    \[
        c_2\frac{n\log(d/\lambda)}{d}.
    \]
\end{lemma}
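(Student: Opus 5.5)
We build an induced path $v_0v_1\dots v_L$ greedily, vertex by vertex, and then close it into a cycle. The target length is
\[
L = c_2\frac{n\log(d/\lambda)}{d}.
\]
Since $d\ge \sqrt{\lambda}$-type bounds are not needed here, note only that $\lambda\le c_1(d/n)^{5/4}d$ forces $d/\lambda\ge c_1^{-1}(n/d)^{5/4}$, so $\log(d/\lambda)\ge \log(1/c_1)$. Hence $L=\Theta_\alpha(\log(d/\lambda)\cdot n/d)$, and the interesting case is $n/d=O(1)$ up to $n/d$ large. Throughout we use \Cref{cor:EML-almost-regular} with $\gamma=c_3(\lambda/d)^{1/3}$ tiny.

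\textbf{Invariant.} After choosing $v_0,\dots,v_j$, let
\[
A_j = V(G)\setminus N[\{v_0,\dots,v_{j-1}\}]\setminus\{v_j\}
\]
be the set of vertices non-adjacent to the path except possibly to $v_j$. The available extensions are $N(v_j)\cap A_j$. We also maintain a closing set $C_j\subseteq N(v_0)\cap A'_j$, where $A'_j$ consists of vertices non-adjacent to $v_1,\dots,v_j$. We choose $v_{j+1}\in N(v_j)\cap A_j$ so that the following hold.
\begin{enumerate}
\item $|N(v_{j+1})\cap A_j|$ is large: at least about $(1-\eta)\tfrac dn|A_j|$, i.e.\ $v_{j+1}$ has roughly typical degree into $A_j$.
\item $|N(v_{j+1})\cap C_j|\le(1+\eta)\tfrac dn|C_j|$.
\end{enumerate}
Then $|A_{j+1}|\ge |A_j|-|N(v_{j+1})\cap A_j|\approx(1-d/n)|A_j|$, and similarly $|C_{j+1}|\gtrsim (1-d/n)|C_j|$.

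After $j$ steps, $|A_j|\gtrsim (1-d/n)^j n$ and $|C_j|\gtrsim (1-d/n)^j d$. Choose the closing time $j=L$ so that
\[
(1-d/n)^L \ge \e^{-2c_2\log(d/\lambda)}=(\lambda/d)^{2c_2}.
\]
At that point $|A_L|\ge(\lambda/d)^{2c_2}n$ and $|C_L|\ge(\lambda/d)^{2c_2}d$. Here $d\le\alpha n$ ensures that $1-d/n$ is bounded away from $0$, so that $\log(1-d/n)\ge -O_\alpha(d/n)$; this is where $\alpha$ enters.

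\textbf{Existence of a good next vertex.} We use the expander mixing lemma. Let $B$ be the set of vertices of $N(v_j)\cap A_j$ violating condition (1) or (2). The standard \Cref{l: good regular subgraph}-type argument gives
\[
|B|\le O(\eta^{-2})\Bigl(\frac{\lambda n}{d}\Bigr)^2\cdot\frac{1}{|A_j|}+O(\eta^{-2})\Bigl(\frac{\lambda n}{d}\Bigr)^2\cdot\frac{1}{|C_j|}.
\]
The second term is the dangerous one, since $|C_j|$ is only of order $d(\lambda/d)^{2c_2}$. We need
\[
|B| < |N(v_j)\cap A_j|\approx \frac dn|A_{j-1}|\gtrsim d(\lambda/d)^{2c_2}.
\]
Using $|C_j|\gtrsim d(\lambda/d)^{2c_2}$, this reduces to
\[
\eta^{-2}\frac{\lambda^2n^2}{d^3}(\lambda/d)^{-4c_2}\ll 1,
\]
i.e.\ $(\lambda/d)^{2-4c_2}\ll \eta^2 (d/n)^{2}d^{-1}\cdot d$, up to constants. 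The hypothesis $\lambda\le c_1(d/n)^{5/4}d$ gives
\[
(\lambda/d)^{2-4c_2}\le c_1^{2-4c_2}(d/n)^{5/2-5c_2}\le c_1(d/n)^2,
\]
since $c_2$ is small, and this beats $(d/n)^2$. This calculation is exactly where the exponent $5/4$ is used. The almost-regularity error $\gamma$ is absorbed into $\eta$.

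Two further bookkeeping points. First, $v_{j+1}$ must also be non-adjacent to $v_0$, except at the closing step; we handle this by restricting $A_j$ to avoid $N(v_0)$ from the start, which costs only a constant factor. Second, the initial choice $v_1\in N(v_0)$ is made with both sets large, which is routine by the same counting.

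\textbf{Closing.} At step $L$ we have $|N(v_L)\cap A_L|\gtrsim (d/n)(\lambda/d)^{2c_2}n$ and $|C_L|\gtrsim(\lambda/d)^{2c_2}d$. We need a vertex $w\in N(v_L)\cap C_L\cap A_L$, i.e.\ a common neighbour of $v_L$ and $v_0$ that is non-adjacent to the interior of the path. Rather than a single vertex, the cleaner route is an edge: find $x\in N(v_L)\cap A_L$ and $y\in C_L$ with $xy\in E$, $x\notin N(v_0)$, $y\notin N(v_L)$, which yields the cycle $v_0\dots v_Lxy$. The expander mixing lemma gives $e(N(v_L)\cap A_L,\,C_L)>0$ because
\[
\frac dn\cdot\frac dn(\lambda/d)^{2c_2}n\cdot(\lambda/d)^{2c_2}d \gg \lambda\sqrt{d(\lambda/d)^{2c_2}n\cdot d(\lambda/d)^{2c_2}},
\]
which again follows from $\lambda\le c_1(d/n)^{5/4}d$. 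For this to work, condition (2) must also ensure $C_j\cap N(v_{j+1})$ is removed, so that $y$ is non-adjacent to the interior of the path; $x$ lies in $A_L$ by construction.

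\textbf{Main obstacle.} The delicate step is maintaining the closing set $C_j$ with shrinkage factor $(1-d/n)$ per step, rather than losing additive $\lambda$-error terms. The additive terms accumulate over $L$ steps, and this is precisely where the choice of exponent $5/4$ and the smallness of $c_2$ have to be balanced.
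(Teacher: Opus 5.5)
\textbf{Gap in the invariant.} Your overall strategy is the paper's. You grow an induced path greedily and keep a closing reservoir inside $N(v_0)$. The almost-regular expander mixing lemma bounds the vertices of atypical degree into a fixed set $T$ by $O(\eta^{-2})(\lambda n/d)^2/|T|$, and $\lambda\le c_1(d/n)^{5/4}d$ makes this smaller than the pool of candidates.

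However, the invariant as you state it has an off-by-one error that breaks the induction.

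\begin{itemize}
\item With your definition, $A_{j+1}=V(G)\setminus N[\{v_0,\dots,v_j\}]\setminus\{v_{j+1}\}=A_j\setminus N(v_j)$. So $|A_{j+1}|=|A_j|-|N(v_j)\cap A_j|$, not $|A_j|-|N(v_{j+1})\cap A_j|$.
\item The next pool of extensions is therefore $N(v_{j+1})\cap A_{j+1}=(N(v_{j+1})\cap A_j)\setminus N(v_j)$.
\item Condition (1) only controls $e(v_{j+1},A_j)$. Nothing you impose prevents most of these neighbours from lying in the common neighbourhood $N(v_j)\cap N(v_{j+1})\cap A_j$. Then the next extension set could be tiny or empty, and your estimate $|N(v_j)\cap A_j|\approx\frac dn|A_{j-1}|$ is unjustified.
\item The shrinkage of $A_j$ is not properly controlled either. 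From the previous step you only get $|A_{j+1}|\ge|A_j|-(1+\eta)\frac dn|A_{j-1}|$. This is useless when $d/n$ is close to $\alpha$ and $\alpha$ is close to $1$.
\end{itemize}

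\textbf{The repair.} When choosing $v_{j+1}$, impose the two-sided condition
\[
e\bigl(v_{j+1},A_j\setminus N(v_j)\bigr)=(1\pm\eta)\tfrac dn\,|A_j\setminus N(v_j)|.
\]
This is exactly the paper's condition on $e(v_{i+1},B_i)$, where $B_i=V_i\setminus A_i$ is the next available set. The set $A_j\setminus N(v_j)$ is fixed before $v_{j+1}$ is chosen, so your expander-mixing count applies verbatim. You then get $|N(v_{j+1})\cap A_{j+1}|\ge(1-\eta)\frac dn|A_{j+1}|$ and $|A_{j+2}|\ge(1-(1+\eta)\frac dn)|A_{j+1}|$. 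The rest of your computation, including the requirement $(\lambda/d)^{2-4c_2}\ll\eta^2(d/n)^2$, goes through. Note that any exponent larger than $1$ in place of $5/4$ would do there once $c_2$ is small.

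\textbf{Comparison with the paper after the fix.} The two arguments then differ only in inessential ways.
\begin{itemize}
\item \emph{Path length.} The paper runs for $K/2$ steps, where $(1-(1+2\delta)d/n)^K\approx\lambda n/d^2$, and uses the exponent $5/4$ to show $K\ge\frac{n}{10d}\log(d/\lambda)$. You stop after $c_2\frac nd\log(d/\lambda)$ steps, so the sets shrink only by a factor $(\lambda/d)^{O_\alpha(c_2)}$.
\item \emph{Closing.} The paper closes through a single vertex $u\in U_i\cap N(v_{i+1})$, a common neighbour of $v_0$ and the last path vertex avoiding the interior. In your notation this is a vertex of $N(v_L)\cap C_{L-1}$, not of $N(v_L)\cap C_L\cap A_L$, which is empty since $A_L\cap N(v_0)=\emptyset$. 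You instead close through an edge between $N(v_L)\cap A_L$ and $C_L$. That also works, and your check that the resulting cycle is induced is correct.
\end{itemize}

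\textbf{A typo to fix.} In the displayed closing inequality, the right-hand side should be $\lambda\sqrt{d(\lambda/d)^{2c_2}\cdot d(\lambda/d)^{2c_2}}$. As written, with the extra factor $n$ under the root, the inequality is false. The correct version reduces to $(\lambda/d)^{1-2c_2}\ll d/n$, which follows from $\lambda\le c_1(d/n)^{5/4}d$.
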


\begin{proof}
Fix $\delta\coloneqq \delta(\alpha)>0$ such that $(1+2\delta)\alpha<1-\delta$ and $(5c_3c_1^{1/3}+\alpha)(1+\delta)<1$; note that such $\delta$ exists since $c_1,c_3\ll \alpha < 1$ and satisfies $1-(1+2\delta)d/n>\delta$.
%Notice that such a $\delta$ exists since $d\le \alpha n$ with $0<\alpha<1$ and since $c_1, c_3$ is sufficiently small in terms of $\alpha$. Notice that this $\delta$ satisfies $1-(1+2\delta)\frac{d}{n}>\delta$. 
Also, for $\gamma=c_3\left(\frac{\lambda}{d}\right)^{1/3}\ll \delta$, we have that
\begin{equation}\label{eq:bds}
(1+\delta)\frac{(1+\gamma)^2}{1-\gamma} \le 1+\delta+4\gamma < 1+2\delta\qquad \text{and}\qquad (1-\delta)\frac{(1-\gamma)^2}{1+\gamma} \ge 1-\delta-4\gamma > 1-2\delta.
\end{equation}
We will make use of these inequalities later in the proof.
Next, we assume that $d>n^{3/4}$ (as Corollary~\ref{lem: concluding iterations} shows the result for smaller values of $d$). Also, fix $v_0 \in V(G)$ 
%Observe that we can assume that $d>n^{3/4}$ since Lemma~\ref{lem: small d} shows the result of \Cref{th: main} for smaller values of $d$. 
and define the following sets:
\begin{align*}
    V_0=V(G),\qquad  A_0=V_0\cap N(v_0),\qquad  B_0=V_0\setminus A_0 \qquad  \text{and} \qquad  U_0=N(v_0).
\end{align*}
Observe that 
\begin{equation}\label{eq: sizes_0}
\begin{split}
    &|V_0|=n \left(1-\left(1\pm 2\delta\right)\frac{d}{n}\right)^0,\qquad  |A_0|=e(v_0,V_0) =\left(1\pm 2\delta\right)\frac{d}{n}|V_0|,\\
    &|B_0|=\left(1-\left(1\pm 2\delta\right)\frac{d}{n}\right)|V_0| \qquad  \text{and} \qquad  |U_0|\ge (1-\delta)d\left(1-\left(1+ 2\delta\right)\frac{d}{n}\right)^0.
\end{split}
\end{equation}
We show that there is a vertex $v_1\in A_0\setminus\{v_0\}$ with 
\begin{equation}\label{eq:propsv1}
e(v_1,U_0)<\left(1+ 2\delta\right)\tfrac{d}{n}|U_0|-1 \qquad \text{and}\qquad e(v_1,B_0)\in [(1- 2\delta)\tfrac{d}{n}|B_0|,(1+2\delta)\tfrac{d}{n}|B_0|]. 
\end{equation}
Set 
\begin{align*}
A^+_0&=\big\{v\in A_0\mid e(v,U_0)>(1+\delta)\tfrac{(1+\gamma)^2}{1-\gamma}\tfrac{d}{n}|U_0|\big\},\\
B_0^+ &= \big\{v\in A_0\setminus A_0^+\mid e(v,B_0)>(1+\delta)\tfrac{(1+\gamma)^2}{1-\gamma}\tfrac{d}{n}|B_0|\big\},\\
B_0^- &= \big\{v\in A_0\setminus A_0^+\mid e(v,B_0)<(1-\delta)\tfrac{(1-\gamma)^2}{1+\gamma}\tfrac{d}{n}|B_0|\big\}.  
\end{align*}

\begin{claim}\label{cl:A0}
We have $|A^+_0|<\delta|A_0|$ and $|B^+_0\cup B^-_0| < |A_0\setminus A_0^+|/2$. 
\end{claim}
\begin{proof}
First, we deal with the first inequality.
By Lemma~\ref{cor:EML-almost-regular},
\[
    e(A^+_0,U_0)\le \frac{(1+\gamma)^2}{1-\gamma}\cdot\frac{d}{n}|A^+_0||U_0|+\frac{1+\gamma}{1-\gamma}\cdot\lambda \sqrt{|A^+_0||U_0|}.
\]
However, by the definition of $A^+_0$, 
\[
    e(A^+_0,U_0)> \left(1+\delta\right)\frac{(1+\gamma)^2}{1-\gamma}\cdot\frac{d}{n}|A^+_0||U_0|,
\]
and combining the two inequalities implies
\[\frac{1+\gamma}{1-\gamma}\cdot\lambda \sqrt{|A^+_0||U_0|}\ge \delta \frac{(1+\gamma)^2}{1-\gamma}\cdot\frac{d}{n}|A^+_0||U_0|\quad\implies\quad |A^+_0|\le \bigg(\frac{\lambda n}{\delta (1+\gamma)d}\bigg)^2 \frac{1}{|U_0|}\le \bigg(\frac{d}{n}\bigg)^{1/2}\frac{\sqrt{c_1} d^2}{|U_0|}\le \delta |A_0|,\]
%\[
%    \lambda >\delta(1+\gamma)\frac{d}{n}\sqrt{|A'||U_0|}.
%\]
%The assumption $|A'|\ge\delta|A_0|$ together with \eqref{eq: sizes_0} yield the following:
%\begin{align*}
%    \lambda &>\delta(1+\gamma)\frac{d}{n}\sqrt{\delta\left(1- 2\delta\right)\frac{d}{n}\cdot n\cdot d}\ge \delta^{1.5}\sqrt{1- 2\delta}\cdot \frac{d^2}{n},
%\end{align*}
where we used that $c_1\ll \alpha$ and that $\delta=\delta(\alpha)$.
Next, we separately show that $|B^+_0| < |A_0\setminus A^+_0|/4$ and $|B^-_0| < |A_0\setminus A^+_0|/4$, starting with the first inequality.
By Lemma~\ref{cor:EML-almost-regular},
\[
    e(B^+_0,B_0)\le \frac{(1+\gamma)^2}{1-\gamma}\cdot\frac{d}{n}|B^+_0||B_0|+\frac{1+\gamma}{1-\gamma}\cdot\lambda \sqrt{|B^+_0||B_0|},
\]
and by the definition of $B^+_0$,
\[
    e(B^+_0,B_0)> \left(1+\delta\right)\frac{(1+\gamma)^2}{1-\gamma}\cdot\frac{d}{n}|B^+_0||B_0|.
\]
The last two displays together imply that
\[\frac{1+\gamma}{1-\gamma}\cdot\lambda \sqrt{|B^+_0||B_0|}\ge \delta \frac{(1+\gamma)^2}{1-\gamma}\cdot\frac{d}{n}|B^+_0||B_0|\quad\implies\quad |B^+_0|\le \bigg(\frac{\lambda n}{\delta (1+\gamma)d}\bigg)^2 \frac{1}{|B_0|}\le \bigg(\frac{d}{n}\bigg)^{1/2}\frac{\sqrt{c_1} d^2}{|B_0|} < \frac{|A_0\setminus A_0^+|}{4}.\]

Proving that $|B^-_0| < |A_0\setminus A^+_0|/4$ follows an analogous computation. Together with the last display, it implies that $|B^+_0\cup B^-_0| < |A_0\setminus A_0^+|$ and finishes the proof.
\end{proof}

By Claim~\ref{cl:A0},~\eqref{eq:bds} and~\eqref{eq: sizes_0}, there exists $v_1\in A_0\setminus\{v_0\}$ satisfying~\eqref{eq:propsv1}. 
The induced path we build starts with the edge $v_0v_1$. 

Next, define $K$ to be the largest integer satisfying $(1-(1+2\delta)d/n)^K > \lambda n/d^2$. In particular, we obtain that 
\[K \in \bigg[\frac{n\log(d^2/\lambda n)}{2d},\frac{2n\log(d^2/\lambda n)}{d}\bigg]\quad\text{and also}\quad K\ge \frac{n}{2d} \log\bigg(\frac{d}{\lambda}\cdot \frac{d}{n}\bigg)\ge \frac{n}{2d} \log\bigg(\frac{d}{\lambda}\cdot \bigg(\frac{\lambda}{d}\bigg)^{4/5}\bigg)= \frac{n}{10d} \log\bigg(\frac{d}{\lambda}\bigg),\]
where we used that $\lambda\le (d/n)^{5/4} d$.
Next, for a positive integer $i < K/2$, suppose that we found an induced path $v_0,\ldots,v_i$ and define the sets

%\begin{align*}
%    V_1=B_0=V\setminus N(v_0) ,\quad  A_1=V_1\cap N(v_1),\quad  B_1=V_1\setminus A_1, \quad  \text{and} \quad  U_1=N(v_0)\setminus N(v_1).
%\end{align*}
%Observe that 
%\begin{align}\label{eq: sizes_1}\notag
%    &|V_1|=n\cdot \left(1-\left(1\pm 2\delta\right)\frac{d}{n}\right)^1,\quad  |A_1|=\text{deg}(v_1,V_1)=\left(1\pm 2\delta\right)\frac{d}{n}|V_1|,\\
%    &|B_1|=\left(1-\left(1\pm 2\delta\right)\frac{d}{n}\right)|V_1|, \quad  \text{and} \quad  |U_1|\ge d\left(1-\left(1+ 2\delta\right)\frac{d}{n}\right)^1.
%\end{align}

%Let $k\in \mathbb{N}$ be the last integer satisfying $\left(1-(1+2\delta)\frac{d}{n}\right)^k>\frac{\lambda n}{d^2}$. Observe that $k=\Theta\left(\frac{n\log\left(\frac{d^2}{\lambda n}\right)}{d}\right)$. Notice that since $\lambda\le c_1 \left(\frac{d}{n}\right)^{5/4}\cdot d$, we have $k=\Omega\left(\frac{n\log\left(d/\lambda\right)}{d}\right)$. We show by induction that the construction of an induced path in $G$ continues as long as its length reaches $k/2$. Assume we have an induced path $v_0 v_1\ldots v_i$ in $G$ with $i<k/2$ and sets 
\begin{align*}
    &V_i=B_{i-1}=V\setminus N[\{v_0,\ldots,v_{i-1}\}] ,\quad  A_i=V_i\cap N(v_i),\\
    &B_i=V_i\setminus A_i \quad  \text{and} \quad  U_i=U_{i-1}\setminus N[v_i]=N(v_0)\setminus N[\{v_1,\ldots,v_{i}\}].
\end{align*}
We also assume the properties 
\begin{equation}\label{eq: sizes_i}
\begin{split}
    &|V_i|=n \left(1-\left(1\pm 2\delta\right)\frac{d}{n}\right)^i,\quad  |A_i|=e(v_i,V_i)=\left(1\pm 2\delta\right)\frac{d}{n}|V_i|,\\
    &|B_i|=\left(1-\left(1\pm 2\delta\right)\frac{d}{n}\right)|V_i| \quad  \text{and} \quad  |U_i|\ge (1-\delta)d\left(1-\left(1+2\delta\right)\frac{d}{n}\right)^i;
\end{split}
\end{equation}
note that they are verified for $i=1$ by combining~\eqref{eq: sizes_0} and~\eqref{eq:propsv1}. Assuming them for $i$, we show that there exists a vertex $v_{i+1}\in A_i\setminus\{v_i\}$ such that 
\begin{equation}\label{eq:propsvi}
e(v_{i+1},U_i)<\left(1+ 2\delta\right)\tfrac{d}{n}|U_i|-1\qquad \text{and}\qquad e(v_{i+1},B_i)\in [(1- 2\delta)\tfrac{d}{n}|B_i|,(1+2\delta)\tfrac{d}{n}|B_i|]. 
\end{equation}
Set
\begin{align*}
A_i^+ 
&= \big\{v \in A_i \mid e(v,U_i) > (1+\delta)\tfrac{(1+\gamma)^2}{1-\gamma}\tfrac{d}{n}|U_i|\big\},\\[6pt]
B_i^+ 
&= \big\{v \in A_i\setminus A_i^+ \mid e(v,B_i) > (1+\delta)\tfrac{(1+\gamma)^2}{1-\gamma}\tfrac{d}{n}|B_i|\big\},\\[6pt]
B_i^- 
&= \big\{v \in A_i\setminus A_i^+ \mid e(v,B_i) < (1-\delta)\tfrac{(1-\gamma)^2}{1+\gamma}\tfrac{d}{n}|B_i|\big\}.
\end{align*}

\begin{claim}\label{cl:Ai}
We have $|A_i^+| < \delta |A_i|$ and $|B_i^+ \cup B_i^-| < |A_i \setminus A_i^+|/2$.
\end{claim}

The proof is completely analogous to the one of Claim~\ref{cl:A0} but provided for completeness.

\begin{proof}
We first prove that $|A_i^+| < \delta |A_i|$. By Lemma~\ref{cor:EML-almost-regular},
\[
    e(A_i^+,U_i)
    \le 
    \frac{(1+\gamma)^2}{1-\gamma}\cdot\frac{d}{n}|A_i^+||U_i|
    +\frac{1+\gamma}{1-\gamma}\cdot \lambda \sqrt{|A_i^+||U_i|}.
\]
On the other hand, by the definition of $A_i^+$,
\[
    e(A_i^+,U_i)
    >
    (1+\delta)\frac{(1+\gamma)^2}{1-\gamma}\cdot\frac{d}{n}|A_i^+||U_i|,
\]
and combining the two inequalities implies
\begin{equation}\label{eq:displi1}
\frac{1+\gamma}{1-\gamma}\cdot \lambda \sqrt{|A_i^+||U_i|}
\ge 
\delta \frac{(1+\gamma)^2}{1-\gamma}\cdot\frac{d}{n}|A_i^+||U_i|\quad\implies\quad
|A_i^+|
\le
\bigg(\frac{\lambda n}{\delta (1+\gamma)d}\bigg)^2 \frac{1}{|U_i|}.
\end{equation}
Using that $2i\le K$, we deduce that
\begin{align}\label{eq:displi2}\notag
\bigg(\frac{\lambda n}{\delta (1+\gamma)d}\bigg)^2 \frac{1}{|U_i|\cdot |A_i|}\le \bigg(\frac{\lambda n}{\delta (1+\gamma)d}\bigg)^2 \frac{1}{(1-2\delta)^2 d^2(1-(1+2\delta)d/n)^{2i}}\\
\le \bigg(\frac{\lambda n}{\delta (1+\gamma)d}\bigg)^2\frac{1}{(1-2\delta)^2\lambda n}\le\frac{1}{\delta^2(1+\gamma)^2(1-2\delta)^2}\cdot \bigg(\frac{\lambda}{d}\bigg)^{1/5}\le \delta,
\end{align}
where we used that $\lambda/d\le (d/n)^{5/4}\ll c_1\ll \alpha$ and that $\delta=\delta(\alpha)$.
Combining the last two displays shows that $|A_i^+|\le \delta |A_i|$.

We turn to showing that $|B_i^+| < |A_i \setminus A_i^+|/4$; as before, $|B_i^-| < |A_i \setminus A_i^+|/4$ follows similarly.
By Lemma~\ref{cor:EML-almost-regular},
\[
    e(B_i^+,B_i)
    \le 
    \frac{(1+\gamma)^2}{1-\gamma}\cdot\frac{d}{n}|B_i^+||B_i|
    +\frac{1+\gamma}{1-\gamma}\cdot \lambda \sqrt{|B_i^+||B_i|}.
\]
Moreover, by the definition of $B_i^+$,
\[
    e(B_i^+,B_i)
    >
    (1+\delta)\frac{(1+\gamma)^2}{1-\gamma}\cdot\frac{d}{n}|B_i^+||B_i|,
\]
and combining these inequalities gives
\[
\frac{1+\gamma}{1-\gamma}\cdot \lambda \sqrt{|B_i^+||B_i|}
\ge 
\delta \frac{(1+\gamma)^2}{1-\gamma}\cdot\frac{d}{n}|B_i^+||B_i|,
\quad \implies\quad
|B_i^+|
\le 
\bigg(\frac{\lambda n}{\delta (1+\gamma)d}\bigg)^2 \frac{1}{|B_i|}.
\]
%Observe that by \eqref{eq: sizes_i}, we have $|B_i|\ge |U_i|$, therefore, combining it with~\eqref{eq:displi1} and~\eqref{eq:displi2}, we obtain $|B_i^+|\le \delta |A_i| < |A\setminus A_i^+|/4$. 
As a result, using \eqref{eq: sizes_i}, the inequality $\lambda/d\le c_1\ll \delta(\alpha)$ and the fact that $i<K/2$ gives
\begin{align*}
|B_i^+|\le \bigg(\frac{\lambda n}{\delta (1+\gamma)d}\bigg)^2 \frac{\delta |A_i|}{\delta |A_i|\cdot |B_i|}
&\le \bigg(\frac{\lambda n}{\delta (1+\gamma)d}\bigg)^2 \frac{\delta |A_i|}{\delta (1 + 2\delta) d (1-(1 - 2\delta)d/n) |V_i|^2/n}\\
&\le \bigg(\frac{\lambda n}{\delta (1+\gamma)d}\bigg)^2 \frac{\delta |A_i|}{\delta (1 + 2\delta) d (1-(1 - 2\delta)d/n)^{2i+1} n}\\
&\le \frac{1}{\delta^3 (1+\gamma)^2 (1+2\delta)}\cdot \frac{\lambda}{d}\cdot \delta |A_i|\le \delta |A_i|.
\end{align*}
Using the analogous inequality for $B^-_i$ shows that $|B_i^+ \cup B_i^-| < |A_i \setminus A_i^+|/2$, which completes the proof.
\end{proof}

By Claim~\ref{cl:Ai},~\eqref{eq:bds} and~\eqref{eq: sizes_i},
%the relation $\min\{|U_i|,|B_i|\}\ge (1-2\delta)d\sqrt{\lambda n/d^2}\ge \sqrt{n}$,
there exists $v_{i+1}\in A_i\setminus\{v_i\}$ satisfying~\eqref{eq:propsvi}. 
Furthermore, it is easy to verify that this vertex ensures the properties in~\eqref{eq: sizes_i} for $i+1$.

Notice that, by construction, the path $v_0v_1\ldots v_{i+1}$ is an induced path in $G$ for every integer $i < \lfloor K/2\rfloor$. 
Finally, when $i+1=\lfloor K/2\rfloor$, say, we show that $e(v_{i+1},U_i)>0$, that is, we show that we can choose $v_{i+1}$ such that it has a neighbour $u$ in $U_i$.
Then, choosing a neighbour $u$ of vertex $v_{i+1}$ in the set $U_i$ suffices to close an induced cycle $v_0,v_1,\ldots,v_{i+1},u$: note that the vertex $u$ is not adjacent to any of $v_1,\ldots,v_i$ by definition of $U_i$.

It remains to show that $e(v_{i+1},U_i)>0$ for our choice of $i$. To this end, set 
\[A_i^- = \big\{v \in A_i \mid e(v,U_i) < (1-\delta)\tfrac{(1-\gamma)^2}{1+\gamma}\tfrac{d}{n}|U_i|\big\}.\]
By Lemma~\ref{cor:EML-almost-regular}, we have
\[
    e(A_i^-,U_i)
    \ge 
    \frac{(1-\gamma)^2}{1+\gamma}\cdot\frac{d}{n}|A_i^-||U_i|
    -\frac{1-\gamma}{1+\gamma}\cdot \lambda \sqrt{|A_i^-||U_i|}.
\]
At the same time, by the definition of $A_i^-$,
\[
    e(A_i^-,U_i)
    <
    (1-\delta)\frac{(1+\gamma)^2}{1-\gamma}\cdot\frac{d}{n}|A_i^-||U_i|,
\]
and combining the two inequalities implies
\begin{equation*}
\frac{1-\gamma}{1+\gamma}\cdot \lambda \sqrt{|A_i^-||U_i|}
\ge 
\delta \frac{(1-\gamma)^2}{1+\gamma}\cdot\frac{d}{n}|A_i^-||U_i|\quad\implies\quad
|A_i^-|
\le
\bigg(\frac{\lambda n}{\delta (1-\gamma)d}\bigg)^2 \frac{1}{|U_i|} < |A_i|,
\end{equation*}
where the last inequality holds by~\eqref{eq:displi1} and~\eqref{eq:displi2}. This shows that some neighbour of $v_{i+1}$ belongs to $U_i$ and concludes the proof.
\end{proof}

We are now ready to complete the proof of \Cref{th: main}.

\begin{proof}[Proof of \Cref{th: main}]
We can (and do) assume that $d>n^{4/5}$ as Corollary~\ref{lem: concluding iterations} shows the result for smaller values of $d$. 
Fix $\hat c_3=c_3/16$ with $c_3$ as in Corollary~\ref{lem: concluding iterations}, $C$ as in \Cref{thm:patched} and set $\gamma=\hat c_3(\lambda/d)^{1/3}$.
We also assume that 
\begin{enumerate}[(a)]
    \item $d=o(n)$, as Lemma~\ref{lem: one by one} covers the case $d=\Theta(n)$, and
    \item $\lambda=\Omega((d/n)^{4/3}\cdot d)$, as Lemma~\ref{lem: one by one} treats the complementary case.
\end{enumerate}
    
Set $p=C n^3/d^4$. We verify that the conditions of \Cref{thm:patched} are satisfied. First, since $n^{4/5}<d\le n$, we have that $C/n\le p< 1$. Moreover, $\gamma\le 1/200$ and noting that $d=o(n)$,
assumption (b) gives
\[dp= C \frac{n^3}{d^3}= \Omega\bigg(\frac{C}{\hat c_3^2} \bigg(\bigg(\frac{n}{d}\bigg)^{4/3}\bigg)^{2/3} \log\bigg(C \frac{n^4}{d^4}\bigg)\bigg)= \Omega\bigg(\frac{C}{\hat c_3^2} \bigg(\bigg(\frac{d}{\lambda}\bigg)^{1/3}\bigg)^{2} \log\bigg(C \frac{n^4}{d^4}\bigg)\bigg)= \Omega(C\gamma^{-2}\log(np)).\]
Finally, since $\lambda=\Omega((d/n)^{4/3}\cdot d)$ and $d=o(n)$, we also have $\lambda p\ge C\sqrt{dp\log (np)}$. 
Therefore, by \Cref{thm:patched}, there is $X\subseteq V(G)$ of size $|X|=np$ such that $G[X]$ is an $(np,(1\pm 2\gamma)dp, 11\lambda p)$-graph. Now, observe that $dp\le (np)^{4/5}$ and we also have that $2\gamma<(c_3/8)\cdot(11\lambda p/dp)^{1/3}$ for small enough $c_3$. 
Thus, when $11\lambda p\le c_1 (dp)/1331$, applying Corollary~\ref{lem: concluding iterations} (for $t=3$) to the graph $G[X]$ ensures an induced cycle of length at least
    \[
        \frac{c_2}{10^6}\cdot\frac{np\log(dp/11\lambda p)}{dp}\ge \frac{c_2}{2\cdot 10^6}\cdot \frac{n\log(d/\lambda)}{d}.
    \]
Since every induced cycle in $G[X]$ is also induced in $G$, the proof is completed.
\end{proof}

\subsection{\texorpdfstring{Random induced subgraphs of $(n,(1\pm \gamma)d,\lambda)$-graphs}{Random Induced Subgraphs of approximately regular pseudorandom graphs}}\label{s: generalisation of FHMV}

This subsection is devoted to proving the key theorem (\Cref{thm:patched}) we used in~\Cref{s: full d}. We provide here the full statement again:

\begin{theorem}\label{thm:patched1} 
There is an absolute constant $C>0$ such that the following holds for all sufficiently large $n$.
Fix $d,\lambda>0$, $\gamma\in(0,1/200]$ and $\sigma\in [C/n,1)$ with $m=\lfloor \sigma n\rfloor \in\mathbb N$, and an $(n,(1\pm\gamma)d,\lambda)$-graph $G$. 
Consider a uniformly random subset $X$ of $[n]$ of size $m$, and let $H:=G[X]$. Assume
\begin{equation}\label{eq:patched-hypotheses1}
        \sigma d\ge C\gamma^{-2}\log m
        \qquad \text{and}\qquad
        \sigma\lambda\ge C\sqrt{\sigma d\log m}.
\end{equation}
Then, with probability at least $1-m^{-1/100}$, $H$ is an $(m,(1\pm2\gamma)\sigma d,11\sigma\lambda)$-graph.
\end{theorem}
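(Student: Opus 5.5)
We split the claim into two parts: the degree condition and the spectral condition for $H=G[X]$.

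\emph{Degrees.} Fix $v\in V(G)$. Conditionally on $v\in X$, the number of neighbours of $v$ in $X$ is hypergeometric with mean
\[
\frac{m-1}{n-1}\deg_G(v)=(1\pm\gamma)(1+O(1/m))\sigma d .
\]
By Lemma~\ref{lem:chernoff_hyp} with $t=\gamma\sigma d/2$, the deviation probability is at most $\exp(-\Omega(\gamma^2\sigma d))\le m^{-10}$, using $\sigma d\ge C\gamma^{-2}\log m$. A union bound over $v\in X$ (condition on $|X\cap\{v\}|$ and sum over $v$, costing a factor $m$ in expectation) shows that with probability $1-m^{-5}$ every degree of $H$ lies in $[(1-2\gamma)\sigma d,(1+2\gamma)\sigma d]$.

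\emph{Spectrum.} Write the adjacency matrix as $A=\frac{1}{n}\mathbf{u}\mathbf{u}^{\top}\!\cdot(\text{rank one part})+E$. Concretely, let $\mathbf{w}$ be the Perron eigenvector with eigenvalue $\lambda_1\in[(1-\gamma)d,(1+\gamma)d]$ and set $E=A-\lambda_1\mathbf{w}\mathbf{w}^\top$, so $\|E\|\le\lambda$. Let $P_X$ denote restriction to coordinates in $X$. Then
\[
A_H=\lambda_1(P_X\mathbf w)(P_X\mathbf w)^\top+P_XEP_X .
\]
The first term has rank one. By Weyl interlacing, every eigenvalue of $A_H$ except the largest has absolute value at most $\|P_XEP_X\|$. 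It therefore suffices to show $\|P_XEP_X\|\le 11\sigma\lambda$ with probability $1-m^{-1/50}$. This is the random row-and-column extraction estimate and is the core of the proof.

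We split $E=D+E_0$, where $D$ is the diagonal of $E$ and $E_0$ has zero diagonal. Since $A$ has zero diagonal, $D_{vv}=-\lambda_1 w_v^2$. Near-regularity forces $\mathbf w$ to be close to flat: comparing the Rayleigh quotient with $\mathbf 1/\sqrt n$ gives $w_v^2=O(1/n)$ up to $\gamma$-dependent factors. Hence $\|D\|=O(d/n)\le\sigma\lambda$, using $\sigma\ge C/n$ and $\lambda\ge\sqrt{d}/2$ by Lemma~\ref{l: spectral-lower} (for small $\lambda$ the bound is checked directly). For the off-diagonal part we first replace uniform $X$ by a Bernoulli$(\sigma)$ selector $\xi\in\{0,1\}^n$; this costs a constant factor in probability via the standard coupling $|X|\approx\sigma n$, or via Lemma~\ref{lem:Hoeffding}-type comparison. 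We then decouple: by the de la Peña--Montgomery-Smith decoupling inequality, $\|P_\xi E_0P_\xi\|$ is controlled by $\|P_\xi E_0P_{\xi'}\|$ with independent $\xi'$.

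For the decoupled matrix we use Tropp's bound on norms of random restrictions, $\mathbb E\|R_\sigma MR_{\sigma'}\|$. In the square case it yields
\[
\mathbb E\|P_\xi E_0P_{\xi'}\|\lesssim \sigma\|E_0\|+\sqrt{\sigma\log n}\,\bigl(\|E_0\|_{1\to2}+\|E_0\|_{2\to\infty}\bigr)+\log n\,\|E_0\|_{\max}.
\]
Each term is then bounded. We have $\|E_0\|\le\lambda+O(d/n)$. The column norms satisfy $\|E_0\|_{1\to2}^2\le\deg(v)+O(d^2/n)=O(d)$. Also $\|E_0\|_{\max}\le 1+O(d/n)$. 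With $\sigma\lambda\ge C\sqrt{\sigma d\log m}$, the second term is at most $\sigma\lambda/\sqrt C$. The third term is $O(\log n)\le\sigma\lambda$, since $\sigma\lambda\ge C\sqrt{\sigma d\log m}\ge C\log m$ once $\sigma d\ge\log m$. One technical point is that the hypothesis involves $\log m$ while Tropp's bound produces $\log n$. We plan to handle this by first bounding the norm on a random superset of intermediate density, or by using the sharper $\log(\text{dimension})$ from a matrix Bernstein applied only to rows in the selected set. We expect this $\log n$ versus $\log m$ reconciliation to be the main obstacle.

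To pass from expectation to probability $1-m^{-1/100}$, we apply Talagrand's concentration for convex Lipschitz functions of $\xi$; the norm is $\|E_0\|_{2\to\infty}$-Lipschitz. Alternatively, Markov's inequality applied to a high moment suffices given the polynomial target. Collecting the constants, the bound is $\sigma\lambda(1+O(C^{-1/2}))+\sigma\lambda$ from the diagonal, plus the decoupling constant. This stays below $11\sigma\lambda$ provided the decoupling constant is at most about $4$, so we should use the sharp form of the decoupling inequality or bound $P_\xi E_0P_\xi$ directly via Tropp's symmetric version. Combining this with the degree part yields the theorem.
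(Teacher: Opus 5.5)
Your spectral reduction is sound, and it differs from the paper's. Removing $\lambda_1\mathbf w\mathbf w^\top$ and using rank-one interlacing bounds every non-top eigenvalue of $A_H$ by $\|P_XEP_X\|$, which is cleaner than the paper's degree-normalised matrix $B$. The genuine gap is the step you defer: the $\log n$ versus $\log m$ reconciliation is exactly where the theorem's content lies. Your random-restriction bound produces error terms of order $\sqrt{\sigma d\log n}$ and $\log n$, while the hypotheses only give $\sigma\lambda\ge C\sqrt{\sigma d\log m}$.

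The theorem allows $\log m=o(\log n)$. For instance, $d=n/10$, $\lambda=d/100$, $\gamma=1/200$ and $m=\lfloor\sqrt{\log n}\rfloor$ satisfy both hypotheses for large $n$. Yet $\sqrt{\sigma d\log n}$ and $\log n$ are both much larger than $\sigma\lambda\approx m/1000$. So as written your argument only covers $m\ge n^{c}$, with $C$ depending on $c$. Neither suggested remedy is carried out. An intermediate-density superset would only replace $\log n$ by the logarithm of the intermediate dimension, and iterating would compound the leading constant.

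The missing idea is to make every logarithm depend on the number of \emph{selected} indices. After a factor-$2$ moment decoupling (Lemma~\ref{lem:decoupling}), condition on the row set $I$. The logarithm in the column-compression inequality (Lemma~\ref{lem:compression}) then enters only through $q\ge 2\log\operatorname{rk}(P_IA_{S\times T}P_{I'})$, and this rank is at most $M$. The other logarithm comes from $\mathbb E\|P_IAP_{I'}\|_{1\to2}^p$, which by \eqref{eq:12} is a maximum over the columns in $I'$ only. Conditioning on $I'$, you prove a Bernstein tail for each $\|P_IA_{*,j}\|_2$ (via Hoeffding's comparison, Lemma~\ref{lem:Hoeffding}) and union-bound over just these $m'$ columns, which gives $\sqrt{p/2+\log m'}$ (Lemma~\ref{lem:selected-column}). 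Repeating this for the row restriction yields Proposition~\ref{prop:fixed68} with $q=\max\{p,2\log M\}$. Markov's inequality with $p=K\log m$ then gives the failure probability $m^{-1/100}$.

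The constant $11$ is a second unresolved point, not a formality. It is the coefficient of the leading term $\sigma\|E_0\|$, which your $\lesssim$ hides, and de la Pe\~na--Montgomery-Smith decoupling has far too large a constant. What works is:
\begin{itemize}
\item decoupling with factor $2$;
\item the coefficient exactly $\sqrt{m'/|T|}$ (resp.\ $\sqrt{m/|S|}$) in front of the operator norm in each compression step;
\item a factor $2$ from Markov.
\end{itemize}
Together these give a leading term of about $8\sigma\lambda$.

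Several smaller claims also need repair, though each is easily fixed.
\begin{itemize}
\item The flatness $w_v^2=O(1/n)$ is false in general. For a disjoint union of a $(1+\gamma)d$-regular graph on $2d$ vertices and a $(1-\gamma)d$-regular graph, the Perron vector lives on $2d$ vertices. Your chain giving $\|D\|=O(d/n)\le\sigma\lambda$ via $\sigma\ge C/n$ and $\lambda\ge\sqrt d/2$ also fails. Instead use $\lambda_1w_v=\sum_{u\sim v}w_u\le\sqrt{\deg v}$. This gives $\|D\|,\|E\|_{\max}=O(1)$ and $\|E\|_{1\to2}=O(\sqrt d)$, and $\sigma\lambda\gg1$ follows directly from the hypotheses.
\item The map $\xi\mapsto\|P_\xi E_0P_\xi\|$ is not convex, so Talagrand's inequality is unavailable. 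The high-moment Markov route is the right one.
\item Conditioning a Bernoulli selector on $|\xi|=m$ costs a factor $\Theta(\sqrt m)$, not a constant. Use monotonicity of the norm under restriction (Lemma~\ref{lem:operator}(ii)) with a slightly denser Bernoulli superset, or work with uniform subsets throughout as the paper does.
\end{itemize}
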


Before proving \Cref{thm:patched1}, we introduce the necessary background and present several auxiliary lemmas.

%As explained in Section~\ref{s: outline}, we first prove \Cref{th: main} in the case where $d^6\le n$, and then bootstrap this result for larger values of $d$. The transition from small to large $d$ uses the following parameter-optimised variant of an induced-subgraph inheritance theorem of Ferber, Han, Mao, and Vershynin \cite[Theorem 6.2]{FHMV25}. Roughly speaking, their result shows that random induced subgraphs of psuedorandom graphs inherit pseudorandomness with high probability. The contribution here is to sharpen the quantitative form needed for our bootstrapping argument: using an improved operator-norm estimate for random submatrices, we obtain the same inheritance conclusion for a wider range of degrees and spectral parameters.

%The rest of this section is dedicated to the proof of Theorem~\ref{thm:patched1}.

\subsubsection{Matrix norms and their properties} We start with several classic properties of matrix norms.
Fix a real-valued matrix $A$ with dimensions $m\times n$.
The $\ell^2\to \ell^2$ operator norm of $A$ is defined as 
\[\|A\| = \max_{x \in \mathbb R^n\setminus \{0\}} \frac{\|Ax\|_2}{\|x\|_2} = \max_{x\in \mathbb S^{n-1}} \|Ax\|_2,\]
where $\|\cdot\|_2$ is the standard $\ell^2$-norm, i.e.,
\[
    \|x\|_2=x^T x=\sqrt{\sum_{i=1}^n x_i^2}, \quad \text{for every} \quad x = \begin{bmatrix}
    x_1 \\
    x_2 \\
    \vdots \\
    x_n
    \end{bmatrix}\in \mathbb{R}^n.
\]
We further recall the definition of singular values of a matrix. Given a real $m\times n$ matrix $A$, the singular values of $A$ are the
non-negative square roots of the eigenvalues of the symmetric positive semidefinite matrix $A^T A$. We denote by $s_i(A)$ the $i$-th singular value of $A$ in non-increasing order.

The following lemma contains several simple facts for the $\ell^2\to\ell^2$ operator norm and the spectra of matrices.

\begin{lemma}\label{lem:operator}
Each of the following holds:
\begin{enumerate}
    \item[\emph{(i)}] For every matrix $A$, we have $\|A\|=\|A^T\|$.
    \item[\emph{(ii)}] For every matrix $A$, every subset of rows $R$ of $A$ and every subset of columns $C$ of $A$, $\|A_{R\times C}\|\le \|A\|$. 
    \item [\emph{(iii)}] For every $n\times n$ block-diagonal symmetric matrix $A$ with non-negative entries, we have $|\lambda_i|\le \lambda_1$ for every $2\le i\le n$.
    \item[\emph{(iv)}] For every symmetric matrix $A$ with eigenvalues $\lambda_1\ge \ldots\ge \lambda_n$ such that $\lambda_1 \ge |\lambda_n|$, the second singular value $s_2(A)$ is equal to $\max\{|\lambda_2|,|\lambda_n|\}$.
\end{enumerate}
\end{lemma}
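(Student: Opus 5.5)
We prove the four items separately, using only the variational characterisation of the operator norm and the spectral theorem for real symmetric matrices.

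\emph{Item (i).} We use the identity $\|A\|=\max_{x\in\mathbb S^{n-1},\,y\in\mathbb S^{m-1}} y^TAx$. This holds by Cauchy--Schwarz, with equality attained at $y=Ax/\|Ax\|_2$. The expression $y^TAx$ equals $x^TA^Ty$, so the maximum is symmetric in the roles of $A$ and $A^T$, which gives $\|A\|=\|A^T\|$.

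\emph{Item (ii).} We use the same bilinear formula. Any unit vectors $x$ supported on the columns $C$ and $y$ supported on the rows $R$ extend by zeros to unit vectors in $\mathbb R^n$ and $\mathbb R^m$. For these extensions $y^TA_{R\times C}x=y^TAx$. Hence the maximum defining $\|A_{R\times C}\|$ is taken over a subset of the pairs defining $\|A\|$, and $\|A_{R\times C}\|\le\|A\|$.

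\emph{Item (iii).} Write $A=\mathrm{diag}(A_1,\dots,A_r)$ with each $A_j$ symmetric and entrywise non-negative. The spectrum of $A$ is the union of the spectra of the blocks, and its largest eigenvalue is $\lambda_1=\max_j\lambda_1(A_j)$. So it suffices to show that, for a single symmetric non-negative matrix $B$, every eigenvalue $\mu$ satisfies $|\mu|\le\lambda_1(B)$. Take a unit eigenvector $x$ with $Bx=\mu x$, and let $|x|$ denote the entrywise absolute value. Then
\[|\mu|=|x^TBx|\le |x|^TB|x|\le\lambda_1(B),\]
where the first inequality uses non-negativity of the entries of $B$, and the second uses the Rayleigh quotient characterisation of $\lambda_1(B)$. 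This is essentially the Perron--Frobenius bound, and the argument does not actually need the block structure.

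\emph{Item (iv).} A real symmetric matrix is orthogonally diagonalisable, $A=Q\Lambda Q^T$. Then $A^TA=Q\Lambda^2Q^T$, so the singular values of $A$ are the numbers $|\lambda_i|$ sorted in non-increasing order. The hypothesis $\lambda_1\ge|\lambda_n|$ implies $\lambda_1\ge|\lambda_i|$ for every $i$, since each $\lambda_i$ lies in $[\lambda_n,\lambda_1]$. So $|\lambda_1|=\lambda_1$ is the largest of the absolute values, and $s_1(A)=\lambda_1$ is attained at index $1$. Removing it, the second largest absolute value is the maximum of $|\lambda_i|$ over $i\ge2$. Again because every $\lambda_i$ with $i\ge 2$ lies in $[\lambda_n,\lambda_2]$, this maximum equals $\max\{|\lambda_2|,|\lambda_n|\}$.

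One subtlety in (iv) is a tie $\lambda_1=|\lambda_n|$. Removing index $1$ is still a legitimate choice, because the sorted multiset of absolute values is the same whichever of the tied entries we regard as the first. So $s_2(A)$ is still the largest value among $|\lambda_i|$, $i\ge2$.
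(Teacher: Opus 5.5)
Your proof is correct. For item (iv) you argue exactly as the paper does: orthogonal diagonalisation, singular values equal to the sorted $|\lambda_i|$, and $\lambda_1\ge|\lambda_n|$ forcing $s_1=\lambda_1$. Your remark on the tie $\lambda_1=|\lambda_n|$ adds care that the paper omits.

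For (i)--(iii) the paper gives no argument. It cites standard facts from Hogben's handbook, and attributes (iii) to Perron--Frobenius applied blockwise. You instead give self-contained proofs: (i) and (ii) follow from the bilinear characterisation $\|A\|=\max_{\|x\|_2=\|y\|_2=1}y^TAx$, and (iii) from the one-line estimate $|\mu|=|x^TBx|\le|x|^TB|x|\le\lambda_1(B)$.

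This gains more than self-containment. Your argument for (iii) shows the block-diagonal hypothesis is superfluous: every symmetric entrywise non-negative matrix satisfies $|\lambda_i|\le\lambda_1$. So in the last step of the proof of Theorem~\ref{thm:patched1} one can apply (iii) to $A_H$ directly. The remark that an adjacency matrix is block diagonal, which in any case only holds after permuting vertices by connected component, becomes unnecessary. The paper's citations are shorter. Your version avoids invoking Perron--Frobenius theory for what is really an elementary Rayleigh-quotient fact.
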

\begin{proof}
The first is a consequence of \cite[Chapter 17, page 17-3, fact 9(a)]{Hog06}, and the second point is a consequence of \cite[Chapter 17, page 17-7, fact 3]{Hog06}. The third point is a consequence of the Perron-Frobenius Theorem (see, for example, \cite[Chapter 9, page 9-4, fact 5]{Hog06}).
The last point follows from the fact that, by definition of the singular values of a diagonalisable real-valued matrix, $s_1,\ldots,s_n$ corresponds to a decreasing ordering of $|\lambda_1|,\ldots,|\lambda_n|$.
Since $\lambda_1\ge |\lambda_n|$, we have $s_1(A)=\lambda_1$ and therefore $s_2(A)=\max_{i\in [2,n]} |\lambda_i| = \max\{|\lambda_2|,|\lambda_n|\}$.
\end{proof}

We will also make use of the following lemma, which follows from Eckart-Young-Mirsky theorem; it provides another way to compute the second singular value of a matrix.

\begin{lemma}\label{lem:low-rank-approximation}
Let $A$ be an $m \times n$
matrix. Then
\[
s_2(A)=\min_B\:\|{A-B}\|,
\]
where the minimum is over all rank-one $m\times n$ matrices $B$. Moreover, the minimum is attained by $B = s_1(A)\textbf{u}_1\textbf{v}^T_1$, where 
$\textbf{v}_1\in \mathbb{R}^n$ and $\textbf{u}_1\in \mathbb{R}^m$ are any unit vectors such that $A\textbf{v}_1=s_1(A)\textbf{u}_1$.
\end{lemma}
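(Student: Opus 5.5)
The lower bound $\min_B\|A-B\|\ge s_2(A)$ over rank-one $B$, and equality at $B=s_1(A)\mathbf{u}_1\mathbf{v}_1^T$, both come from the singular value decomposition. I will write $A=\sum_{j=1}^r s_j(A)\mathbf{u}_j\mathbf{v}_j^T$ with orthonormal families $(\mathbf{u}_j)\subseteq\mathbb R^m$, $(\mathbf{v}_j)\subseteq\mathbb R^n$ and $s_1(A)\ge s_2(A)\ge\cdots\ge0$, and use the fact that $\|A\|=s_1(A)$, since $\|Ax\|_2^2=x^TA^TAx$ is maximised on the sphere at the top eigenvalue of $A^TA$.

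\emph{Upper bound.} For the attaining matrix, take unit vectors $\mathbf{v}_1,\mathbf{u}_1$ with $A\mathbf{v}_1=s_1(A)\mathbf{u}_1$, assuming $s_1(A)>0$; otherwise $A=0$ and the claim is trivial. Since $\|A\mathbf{v}_1\|_2=s_1(A)=\|A\|$, the vector $\mathbf{v}_1$ maximises $\|Ax\|_2$ on the sphere, so it is an eigenvector of $A^TA$ for the top eigenvalue $s_1(A)^2$. Hence $\mathbf{v}_1$ can be completed to an orthonormal eigenbasis $\mathbf{v}_1,\dots,\mathbf{v}_n$ of $A^TA$. Then $\mathbf{u}_j:=A\mathbf{v}_j/s_j(A)$, for $s_j(A)>0$, gives an SVD whose first term is exactly $s_1(A)\mathbf{u}_1\mathbf{v}_1^T$. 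Consequently $A-B=\sum_{j\ge2}s_j(A)\mathbf{u}_j\mathbf{v}_j^T$, whose operator norm is $s_2(A)$, read off from its own SVD.

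\emph{Lower bound.} Let $B$ have rank at most one. Then $\ker B$ has dimension at least $n-1$. The span of $\mathbf{v}_1,\mathbf{v}_2$ has dimension $2$, so it meets $\ker B$ in a unit vector $x=a\mathbf{v}_1+b\mathbf{v}_2$ with $a^2+b^2=1$ (if $n=1$, then $s_2(A)=0$ and the bound is trivial). For this $x$,
\[
\|A-B\|\ge\|(A-B)x\|_2=\|Ax\|_2=\sqrt{a^2s_1(A)^2+b^2s_2(A)^2}\ge s_2(A),
\]
using the orthonormality of $\mathbf{u}_1,\mathbf{u}_2$. Combining the two bounds gives equality, with the minimum attained by $B=s_1(A)\mathbf{u}_1\mathbf{v}_1^T$.

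The only delicate point is the second claim: it asserts that \emph{any} unit pair with $A\mathbf{v}_1=s_1(A)\mathbf{u}_1$ attains the minimum, not just the pair coming from a fixed SVD. This is why the upper bound rebuilds the SVD around the given $\mathbf{v}_1$, which is legitimate precisely because $\|A\mathbf{v}_1\|_2=\|A\|$ forces $\mathbf{v}_1$ to be a top eigenvector of $A^TA$.
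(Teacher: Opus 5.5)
Your proof is correct and is essentially the route the paper has in mind. The paper gives no argument beyond invoking the Eckart--Young--Mirsky theorem, and you have written out the standard proof of its rank-one, operator-norm case: the SVD tail for the upper bound, and a dimension count of $\operatorname{span}(\mathbf{v}_1,\mathbf{v}_2)$ against $\ker B$ for the lower bound. Your extra step of rebuilding an SVD around an arbitrary unit pair with $A\mathbf{v}_1=s_1(A)\mathbf{u}_1$ improves on the bare citation. The paper later applies the lemma to an explicitly given top singular pair rather than one taken from a fixed SVD, and the standard statement of the theorem does not literally cover that case when $s_1(A)$ is not simple.
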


The following lemma appears (and are proved) in \cite{FHMV25} in \cite{FHMV25}. We give here the proof for completeness.

\begin{lemma} \label{obs: |N|}
    Let $A$ be an $m \times n$ matrix with non-negative entries without zero rows or columns.
    Let $a \coloneqq \sum_{i,j}A_{ij}$ and let $\mathbbm{1}_n$ denote the vector in $\mathbb{R}^n$ whose all coordinates are equal to $1$. Define the following two diagonal matrices: Let $L=L(A)$ be the $m\times m$ diagonal matrix with $L_{i,i}=\sum_{j}A_{i,j}$ for all $i$ (that is, the sum of entries in the $i$-th row), and $R=R(A)$ be the $n\times n$ diagonal matrix with $R_{j,j}=\sum_{i}A_{i,j}$ (that is, the sum of entries in the $j$-th column).  Consider the vectors $\textbf{u}_1 \coloneqq a^{-1/2} L^{1/2} \mathbbm{1}_m$ and $\textbf{v}_1 \coloneqq a^{-1/2} R^{1/2} \mathbbm{1}_n$. Then: 
    \begin{enumerate}
        \item both $\textbf{u}_1$ and $\textbf{v}_1$ are unit vectors;
        \item $L^{-1/2}AR^{-1/2} \textbf{v}_1=\textbf{u}_1$;
        \item $s_1(L^{-1/2}AR^{-1/2}) = \|{L^{-1/2}AR^{-1/2}}\| = \textbf{u}_1^T L^{-1/2}AR^{-1/2} \textbf{v}_1 = 1$.
    \end{enumerate}
\end{lemma}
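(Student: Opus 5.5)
The three assertions follow by direct computation from the definitions of $L$, $R$, $\textbf{u}_1$, $\textbf{v}_1$, together with one nontrivial upper bound on the operator norm. Since $A$ has non-negative entries and no zero rows or columns, every diagonal entry of $L$ and $R$ is strictly positive, so $L^{\pm1/2}$ and $R^{\pm1/2}$ are well defined. For (1), compute $\|\textbf{u}_1\|_2^2=a^{-1}\mathbbm{1}_m^TL\mathbbm{1}_m=a^{-1}\sum_i L_{ii}=a^{-1}\sum_{i,j}A_{ij}=1$. The same computation with column sums gives $\|\textbf{v}_1\|_2=1$.

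For (2), $R^{-1/2}\textbf{v}_1=a^{-1/2}\mathbbm{1}_n$. Hence $A R^{-1/2}\textbf{v}_1=a^{-1/2}A\mathbbm{1}_n=a^{-1/2}L\mathbbm{1}_m$, because the $i$-th entry of $A\mathbbm{1}_n$ is the $i$-th row sum. Applying $L^{-1/2}$ then gives $a^{-1/2}L^{1/2}\mathbbm{1}_m=\textbf{u}_1$. It follows immediately that $\textbf{u}_1^TL^{-1/2}AR^{-1/2}\textbf{v}_1=\|\textbf{u}_1\|_2^2=1$. Combined with (2), this shows $\|L^{-1/2}AR^{-1/2}\|\ge1$.

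The main step in (3) is the reverse inequality $\|M\|\le1$ for $M:=L^{-1/2}AR^{-1/2}$. By (i) of Lemma~\ref{lem:operator} and the variational description, it suffices to show $|x^TMy|\le\|x\|_2\|y\|_2$ for all $x\in\mathbb R^m$ and $y\in\mathbb R^n$. Write $x^TMy=\sum_{i,j}A_{ij}\frac{x_i}{\sqrt{L_{ii}}}\frac{y_j}{\sqrt{R_{jj}}}$. Since $A_{ij}\ge0$, Cauchy--Schwarz with weights $A_{ij}$ bounds this in absolute value by
\[
\Big(\sum_{i,j}A_{ij}\tfrac{x_i^2}{L_{ii}}\Big)^{1/2}\Big(\sum_{i,j}A_{ij}\tfrac{y_j^2}{R_{jj}}\Big)^{1/2}.
\]
Summing over $j$ in the first factor and over $i$ in the second, the row and column sums cancel the denominators, leaving $\|x\|_2\|y\|_2$. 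Hence $\|M\|\le1$.

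Together with the lower bound from (2), this gives $\|M\|=1=\textbf{u}_1^TM\textbf{v}_1$. Finally, $s_1(M)=\|M\|$ by the standard identity between the largest singular value and the $\ell^2\to\ell^2$ operator norm, since $\|M\|^2=\lambda_{\max}(M^TM)$. This completes (3).
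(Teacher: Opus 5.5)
Your proof is correct and follows essentially the same route as the paper: (1) and (2) by direct computation, the lower bound $\|M\|\ge 1$ from $M\textbf{v}_1=\textbf{u}_1$, and the upper bound $\|M\|\le 1$ from a weighted quadratic inequality that uses $A_{ij}\ge 0$. The only difference is cosmetic: the paper expands $0\le\sum_{i,j}A_{ij}\bigl(x_i/\sqrt{L_{i,i}}-y_j/\sqrt{R_{j,j}}\bigr)^2$ for unit $x,y$ (the AM--GM form of your weighted Cauchy--Schwarz step), whereas you bound $|x^TMy|$ directly, and your appeal to Lemma~\ref{lem:operator}(i) is unnecessary but harmless.
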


\begin{proof} 
The first two parts readily follow from the definitions of $a$, $L$, $R$.
As for the third part, the equation $s_1(L^{-1/2}AR^{-1/2}) = \|{L^{-1/2}AR^{-1/2}}\|$ holds for any matrix. Let us show that $\|L^{-1/2}AR^{-1/2}\|\le 1$. For every $\textbf{x}=(x_i)_{i=1}^n,\textbf{y}=(y_i)_{i=1}^n\in \mathbb{R}^n$ such that $\|\textbf{x}\|_2=\|\textbf{y}\|_2=1$, we have
\[0\le \sum_{i\in [m], j \in[n]}A_{i,j}\left(\frac{x_i}{\sqrt{L_{i,i}}}-\frac{y_j}{\sqrt{R_{j,j}}}\right)^2=2-2\sum_{i\in [m], j \in[n]}\frac{A_{i,j}x_iy_j}{\sqrt{L_{i,i}R_{j,j}}}=2-2\textbf{x}^T L^{-1/2}AR^{-1/2} \textbf{y}.\]
This implies that $\textbf{x}^T L^{-1/2}AR^{-1/2} \textbf{y}\le 1$ for all unit vectors $\textbf{x}$ and $\textbf{y}$, which yields $\|L^{-1/2}AR^{-1/2}\|\le 1$. It remains to show that $\|L^{-1/2}AR^{-1/2}\|\ge 1$. Indeed, by definition of $L^{-1/2}AR^{-1/2}$, we have $\text{\textbf{\textit{u}}}_1^T L^{-1/2}AR^{-1/2} \text{\textbf{\textit{v}}}_1=1$. On the other hand, by the second part of the lemma, $\text{\textbf{\textit{u}}}_1=L^{-1/2}AR^{-1/2} \text{\textbf{\textit{v}}}_1$, hence
\[
\text{\textbf{\textit{v}}}_1^T \left(L^{-1/2}AR^{-1/2}\right)^T L^{-1/2}AR^{-1/2} \text{\textbf{\textit{v}}}_1=\text{\textbf{\textit{u}}}_1^T L^{-1/2}AR^{-1/2} \text{\textbf{\textit{v}}}_1=1.
\]
Therefore, the first part of the lemma and the definition of the operator norm give $\|{L^{-1/2}AR^{-1/2}}\| \ge 1$.
\end{proof}

\begin{comment}
The following theorem proved by Thompson \cite{thompson1972principal} is useful when one wants to obtain non-trivial bounds on the singular values of submatrices.

\begin{theorem}[Interlacing Theorem for singular values]\label{thm:Interlacing-Theorem}
Let $A$
be an $m\times n$ matrix and let 
$$\alpha_1\geq\alpha_2\geq\ldots\geq\alpha_{\min\{m,n\}}$$
be its singular values. Let $B$ be any $p\times q$ submatrix of $A$ and let $$\beta_1\geq\beta_2\geq\ldots\geq\beta_{\min\{p,q\}}$$ be its singular values. Then
\[
\begin{aligned}
\alpha_i\geq\beta_i,\quad\quad\quad\quad\quad\quad\:\:&\text{for }i=1,2,\ldots,\min\{p,q\},\\
\beta_i\geq \alpha_{i+(m-p)+(n-q)},\quad&\text{for }i\leq\min\{p+q-m,p+q-n\}.
\end{aligned}
\]
\end{theorem}
\end{comment}

Finally, we consider two additional norms. The $\ell^1\to \ell^2$ operator norm of a matrix $A$ measures the maximum Euclidean norm of the output vector $Ax$ given that the input vector $x$ has $\ell^1$-norm 1. Namely,
\[\|A\|_{1 \to 2} = \max_{x \in \mathbb R^n\setminus \{0\}} \frac{\|Ax\|_2}{\|x\|_1} = \max_{x\in \mathbb R^n: \|x\|_1 = 1} \|Ax\|_2.\]

Note that the unit ball for the $\ell^1$ norm in $\mathbb R^n$ is the cross-polytope obtained as a convex hull of $\{\pm e_j\colon j\in [n]\}$. Since $x\in \mathbb R^n\mapsto \|Ax\|_2$ is a convex function, the latter maximum is attained in one of the extremal points of the unit $\ell^1$-ball, namely

\begin{equation}\label{eq:12}
\|A\|_{1 \to 2} = \max_{j\in [n]} \|(A_{i,j})_{i=1}^m\|_2.
\end{equation}

The last norm we introduce is the infinity norm of the matrix $A$, which is defined as $\|A\|_{\infty} = \max_{i,j} |A_{i,j}|$.

\subsubsection{Probabilistic matrix inequalities}
The first inequality is a ready-to-use estimate of the expected norm of a random submatrix obtained via random column extraction, due to Tropp~\cite[Theorem~23]{Tro08}.
We recall that $\text{rk}(A)$ denotes the rank of a matrix $A$ and, for a subset $I\subseteq [n]$, we denote by $P_{I,n}$ the matrix $(\mathbf{1}_{\{i=j\in I\}})_{i,j\in [n]}$.

\begin{lemma}\label{lem:compression}
Fix integers $m'\le n$, a real-valued matrix $A$ with $n$ columns, and a random set $I'$ distributed uniformly among all subsets of $[n]$ of size $m'$.
Fix also $p\ge2$ and $q=\max\{2,p/2,2\log(\mathrm{rk}(AP_{I',n}))\}$.
Then,
\begin{equation}\label{eq:compression}
        (\mathbb E \|AP_{I',n}\|^p)^{1/p}
        \le
        3\sqrt{q} (\mathbb E \|AP_{I',n}\|^p_{1\to2})^{1/p}
        +\sqrt{m'/n} \|A\|.
\end{equation}
\end{lemma}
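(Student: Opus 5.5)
This is Tropp's random column-extraction bound \cite[Theorem~23]{Tro08}, and we could simply import it. If a self-contained argument is wanted, I would follow the standard route: pass to the positive semidefinite matrix $AP_{I',n}A^T$, symmetrise, apply the noncommutative Khintchine inequality, and solve the resulting quadratic inequality.

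\textbf{Step 1 (reduction to a sum of rank-one terms).} Write $a_j$ for the $j$-th column of $A$ and $\delta_j=\mathbf 1_{\{j\in I'\}}$. Then
\[
\|AP_{I',n}\|^2=\Big\|\sum_{j}\delta_j a_ja_j^T\Big\|, \qquad \|AP_{I',n}\|_{1\to2}=\max_j \delta_j\|a_j\|_2 ,
\]
where the second identity is \eqref{eq:12}. Set $E=(\mathbb E\|AP_{I',n}\|^p)^{1/p}$, so that $E^2=\big(\mathbb E\|\sum_j\delta_ja_ja_j^T\|^{p/2}\big)^{2/p}$. Since $\mathbb E\delta_j=m'/n$, the mean term satisfies
\[
\Big\|\mathbb E\sum_j\delta_ja_ja_j^T\Big\|=\frac{m'}{n}\|AA^T\|=\frac{m'}{n}\|A\|^2 .
\]
By the triangle inequality in $L^{p/2}$ (this is where $p\ge2$ is used), $E^2$ is at most $(m'/n)\|A\|^2$ plus the $L^{p/2}$ norm of the centred sum $\sum_j(\delta_j-m'/n)a_ja_j^T$.

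\textbf{Step 2 (symmetrisation).} For a uniformly random $m'$-subset the selectors $\delta_j$ are not independent. I would either use Tropp's argument that sampling without replacement is dominated, for convex functions of the sum, by sampling with replacement (a matrix analogue of Lemma~\ref{lem:Hoeffding}), or Poissonise and compare with independent Bernoulli$(m'/n)$ selectors at the cost of constants. With independence available, I introduce an independent copy $\delta'$ and Rademacher signs $\varepsilon_j$; by Jensen's inequality the centred term is at most
\[
2\Big(\mathbb E\Big\|\sum_j\varepsilon_j\delta_ja_ja_j^T\Big\|^{p/2}\Big)^{2/p}.
\]

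\textbf{Step 3 (Khintchine / Rudelson step).} Condition on $\delta$. The matrices $\delta_ja_ja_j^T$ act on the column space of $AP_{I',n}$, whose dimension is $\mathrm{rk}(AP_{I',n})$. The noncommutative Khintchine inequality in Schatten-$q$ form, with $q\ge\max\{p/2,2\log \mathrm{rk}\}$ so that the Schatten and operator norms agree up to a constant, gives
\[
\Big(\mathbb E_\varepsilon\Big\|\sum_j\varepsilon_j\delta_ja_ja_j^T\Big\|^{p/2}\Big)^{2/p}\lesssim\sqrt q\,\Big\|\sum_j\delta_j\|a_j\|_2^2\,a_ja_j^T\Big\|^{1/2}.
\]
Bounding $\|a_j\|_2^2\le\max_j\delta_j\|a_j\|_2^2$ inside the sum, the right-hand side is at most
\[
\sqrt q\,\|AP_{I',n}\|_{1\to2}\,\|AP_{I',n}\|.
\]
Taking the outer expectation and applying Cauchy--Schwarz in $L^{p/2}$ yields
\[
E^2\le \frac{m'}{n}\|A\|^2+c\sqrt q\,E_{1\to2}\,E,\qquad E_{1\to2}:=(\mathbb E\|AP_{I',n}\|^p_{1\to2})^{1/p}.
\]
Solving this quadratic inequality in $E$ gives
\[
E\le c\sqrt q\,E_{1\to2}+\sqrt{m'/n}\,\|A\| .
\]

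\textbf{Main obstacle.} The hardest step is the combination of Steps~2 and~3 with sharp constants. One must justify the comparison for sampling without replacement, and track the absolute constants from Khintchine and the symmetrisation precisely enough to reach the stated constant $3$ and the stated form of $q$, in particular making the log-rank dependence on the random rank legitimate after conditioning. Here I would follow \cite[Section~5]{Tro08} directly rather than re-derive the constants.
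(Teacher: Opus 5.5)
Your proposal matches the paper, which does not prove this lemma but imports it as Tropp's column-extraction bound \cite[Theorem~23]{Tro08}. Your optional sketch (squaring to $\|\sum_j\delta_j a_ja_j^T\|$, symmetrisation, noncommutative Khintchine with the log-rank Schatten-to-operator-norm step, then solving the quadratic inequality) is the Rudelson-type argument behind that theorem, and you correctly identify the one delicate point, passing between the fixed-size and independent-selector models (a naive Poissonisation costs constants and leaves the $\ell^1\to\ell^2$ term in the wrong model), which you, like the paper, leave to Tropp.
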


The next lemma is an efficient decoupling result from the same work of Tropp~\cite[Theorem~9]{Tro08}.

\begin{lemma}\label{lem:decoupling}
Fix an $n\times n$ symmetric matrix $A$ with all-zero main diagonal and $M\in [n]$.
Denote by $R$ a random subset of $[n]$ of size $M$.
For every $p\ge 1$, there exists a partition $(S,T)$ of $[n]$ with $|S|=\lfloor n/2\rfloor$ and $|T|=\lceil n/2\rceil$ such that, for every $p\ge 1$,
\begin{equation}\label{eq:decoupling}
        (\mathbb E \|P_{R,n} A P_{R,n}\|^p)^{1/p}
        \le
        2\max_{m,m'} (\mathbb E
        \|P_{I,|S|} A_{S\times T} P_{I',|T|}\|^p)^{1/p},
\end{equation}
where $A_{S\times T}$ is the restriction of $A$ on the rows of $S$ and the columns of $T$, $I,I'$ are independent random subsets of $S,T$ with sizes $m,m'$, respectively, and the maximum is over $m,m'$ with $m+m'=M$, $m\le |S|$ and $m'\le |T|$.
\end{lemma}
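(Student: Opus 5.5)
The plan is Tropp's decoupling argument: write the off-diagonal matrix $A$ as an average over random balanced bipartitions of its "cross" blocks, push the norm inside the average by convexity, and then pick a good bipartition by averaging. The zero diagonal is what makes the first step possible, since every entry $A_{ij}$ that matters has $i\neq j$.

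\emph{Step 1: an averaging identity for $A$.} Let $(S',T')$ be a uniformly random partition of $[n]$ with $|S'|=\lfloor n/2\rfloor$ and $|T'|=\lceil n/2\rceil$, independent of $R$. For distinct $i,j$,
\[
\mathbb P(i\in S',j\in T')+\mathbb P(i\in T',j\in S')=\frac{2\lfloor n/2\rfloor\lceil n/2\rceil}{n(n-1)}=:c .
\]
A direct check gives $c\ge 1/2$. Write $A^{S',T'}$ for the $n\times n$ matrix that agrees with $A$ on the entries in $(S'\times T')\cup(T'\times S')$ and is zero elsewhere. Since $A$ has zero diagonal, $\mathbb E_{S',T'}A^{S',T'}=cA$ entrywise. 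Conjugating by $P_{R,n}$ and taking the expectation over the partition only, we get
\[
P_{R,n}AP_{R,n}=c^{-1}\,\mathbb E_{S',T'}\big[P_{R,n}A^{S',T'}P_{R,n}\big].
\]

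\emph{Step 2: Jensen, and reduction to a block.} The map $X\mapsto\|X\|^p$ is convex for $p\ge1$, so Jensen's inequality over $(S',T')$ gives
\[
\mathbb E_R\|P_{R,n}AP_{R,n}\|^p\le c^{-p}\,\mathbb E_{S',T'}\mathbb E_R\|P_{R,n}A^{S',T'}P_{R,n}\|^p .
\]
After permuting coordinates so that $S'$ comes first, $P_{R,n}A^{S',T'}P_{R,n}$ has the form $\begin{pmatrix}0&B\\ B^T&0\end{pmatrix}$, where $B=P_{R\cap S'}A_{S'\times T'}P_{R\cap T'}$. The norm of such a symmetric dilation equals $\|B\|$. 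The outer average over partitions is at most its largest term, so some fixed balanced partition $(S,T)$ satisfies
\[
\mathbb E\|P_{R,n}AP_{R,n}\|^p\le c^{-p}\,\mathbb E_R\|P_{R\cap S}A_{S\times T}P_{R\cap T}\|^p .
\]

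\emph{Step 3: conditioning on the split.} Condition on $|R\cap S|=m$, and set $m'=M-m$. Then $R\cap S$ and $R\cap T$ are independent uniform subsets of $S$ and $T$ of sizes $m$ and $m'$, which are exactly the sets $I,I'$ in \eqref{eq:decoupling}. The constraints $m\le|S|$ and $m'\le|T|$ hold automatically. Averaging over $m$ therefore gives at most the maximum over admissible $(m,m')$. Taking $p$-th roots and using $c^{-1}\le 2$ yields \eqref{eq:decoupling}.

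\emph{Main obstacle.} The only delicate points are verifying $c\ge1/2$ for balanced partitions (including odd $n$) and the order of quantifiers. The argument above produces a partition $(S,T)$ that may depend on $p$. This is how I read the lemma, since it is applied with a single fixed $p$. If a partition uniform in $p$ were genuinely required, I would instead keep the random partition and state the bound with the expectation over $(S,T)$ on the right-hand side.
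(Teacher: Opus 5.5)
Your proof is correct. The identity $\mathbb E_{S',T'}A^{S',T'}=cA$ with $c=2\lfloor n/2\rfloor\lceil n/2\rceil/(n(n-1))\ge 1/2$, Jensen over the random balanced partition, the dilation identity and conditioning on $|R\cap S|$ form the standard decoupling argument behind Tropp's Theorem~9, which the paper cites without giving a proof of its own. Your reading that the partition may depend on $p$ is also the right one, since Proposition~\ref{prop:fixed68} invokes the lemma for a single fixed $p$ and then bounds the right-hand side uniformly over all balanced partitions $(S,T)$ and all admissible $(m,m')$.
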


The last preliminary lemma bounds from above the $\ell^1\to \ell^2$ operator norm of a random row-and-column extraction of a given matrix $A$. While its proof still relies on a standard argument, we did not find a precise reference for it and include it for completeness.

\begin{lemma}\label{lem:selected-column}
Fix positive integers $m,m',n,n'$, a real-valued $n\times n'$ matrix $A$, and two independent random sets $I,I'$ distributed uniformly among all subsets of $[n]$ and $[n']$ of sizes $m,m'$, respectively.
Then, for every $p\ge 2$,
\begin{equation}\label{eq:selected-column}
        (\mathbb E \|P_{I,n}AP_{I',n'}\|^p_{1\to 2})^{1/p}
        \le
        2\sqrt{m/n} \|A\|_{1\to2}
        + 4\sqrt{p/2+\log m'}\|A\|_{\infty}.
\end{equation}
\end{lemma}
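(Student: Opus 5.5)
The plan is to reduce the bound to a maximum of column norms of a randomly row-restricted matrix, and then control that maximum by a union bound over columns combined with a scalar concentration inequality. By \eqref{eq:12}, $\|P_{I,n}AP_{I',n'}\|_{1\to2}=\max_{j\in I'}\|P_{I,n}A e_j\|_2$. Write $Y_j=\|P_{I,n}Ae_j\|_2^2=\sum_{i\in I}A_{ij}^2$. Since $I'$ is independent of $I$, I will condition on $I'$; the maximum then runs over $m'$ fixed columns, and the subsequent bounds are uniform in $I'$.

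For a fixed column $j$, the quantity $Y_j$ is a sum of a uniformly random $m$-subset of the values $x_i=A_{ij}^2\in[0,\|A\|_\infty^2]$. Its mean is $\mathbb E Y_j=(m/n)\|Ae_j\|_2^2\le (m/n)\|A\|_{1\to2}^2$. By Lemma~\ref{lem:Hoeffding}, the upper tail of $Y_j$ is dominated by that of a sum of $m$ independent samples with replacement. To that sum I will apply Bernstein's inequality (Lemma~\ref{lem:Bernstein}) with $M=\|A\|_\infty^2$. The variance satisfies $\sigma^2\le m\,\mathbb E x^2\le \|A\|_\infty^2\,\mathbb E Y_j$. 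This gives
\[\mathbb P\big(Y_j\ge \mathbb E Y_j+t\big)\le \exp\!\Big(-\frac{t^2}{2\|A\|_\infty^2\mathbb E Y_j+2\|A\|_\infty^2t/3}\Big).\]
Taking $t=\mathbb E Y_j+ s\|A\|_\infty^2$, the exponent is at least of order $s$, which yields $\mathbb P\big(\sqrt{Y_j}\ge \sqrt{2\mathbb E Y_j}+\sqrt{s}\,\|A\|_\infty\big)\le e^{-cs}$. A union bound over the $m'$ columns then gives
\[\mathbb P\Big(\max_j\sqrt{Y_j}\ge \sqrt{2m/n}\,\|A\|_{1\to2}+\sqrt{s}\,\|A\|_\infty\Big)\le m' e^{-cs}.\]

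To pass to the $p$-th moment, set $Z=\max_j\sqrt{Y_j}$, $a=\sqrt{2m/n}\|A\|_{1\to2}$ and $b=\|A\|_\infty$. I will write $(\mathbb E Z^p)^{1/p}\le a+(\mathbb E (Z-a)_+^p)^{1/p}$ and integrate the tail $\mathbb P((Z-a)_+\ge b u)\le \min\{1,m'e^{-cu^2}\}$. Splitting the integral at $u_0=\sqrt{(\log m')/c}$ gives $\mathbb E(Z-a)_+^p\le b^p\big(u_0^p+\int_{u_0}^\infty pu^{p-1}e^{-c(u^2-u_0^2)}du\big)\lesssim b^p\big(u_0^p+(p/c)^{p/2}\big)$. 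Taking $p$-th roots yields the bound $O(\sqrt{p+\log m'})\,b$.

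The remaining work, and the main obstacle, is bookkeeping: the constants must come out exactly as $2\sqrt{m/n}$ and $4\sqrt{p/2+\log m'}$. This requires choosing $t$ in Bernstein carefully; I would try $t=\mathbb E Y_j+2s\|A\|_\infty^2$ to get $c=1$ with slack. Since $\sqrt2<2$, there is room in the first term to absorb the cross terms. Using $\Gamma(p/2+1)^{1/p}\le\sqrt{p/2}$ then bounds the Gaussian-type moment by $\sqrt{p/2}$ up to a small constant, giving the stated $4\sqrt{p/2+\log m'}\|A\|_\infty$.
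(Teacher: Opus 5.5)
Your proposal is correct and takes essentially the same route as the paper. Both arguments reduce via \eqref{eq:12} to a maximum of column norms, condition on $I'$, control each column by Hoeffding's comparison plus Bernstein, take a union bound over the $m'$ columns, and integrate the tail to get the $p$-th moment. The differences are cosmetic. Your variance bound $\sigma^2\le\|A\|_\infty^2\,\mathbb E Y_j$ is slicker than the paper's explicit without-replacement computation. Your choice $t=\mathbb E Y_j+s\|A\|_\infty^2$ already gives $c=1$, and then your split tail integral equals exactly $b^p\,\mathbb E[(\log m'+U)^{p/2}]\le b^p(p/2+\log m')^{p/2}$ for $U\sim\mathrm{Exp}(1)$, so the required constants $2$ and $4$ come out with room to spare.
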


\begin{proof}
Set $P_I=P_{I,n}$ and $P_{I'}=P_{I',n'}$, and denote by $A_{*,j}$ the $j$-th column of $A$. For a fixed column $j\in [n']$,
\[
        \|P_I A_{*,j}\|_2^2=\sum_{i\in I} A_{i,j}^2.
\]
Hence, $\mathbb{E}\|P_IA_{*,j}\|_2^2=\frac{m}{n}\|A_{*,j}\|_2^2$ since the random set $I$ samples $m$ values from $A_{1,j}^2,A_{2,j}^2,\ldots,A_{n,j}^2$. 
Moreover, the variance of $\|P_IA_{*,j}\|_2^2$ is equal to
\begin{align*}
\sigma^2 &= \sum_{i=1}^n \bigg(\frac{m}{n} A_{i,j}^4 - \bigg(\frac{m}{n} A_{i,j}^2\bigg)^2\bigg) + \sum_{1\le i_1<i_2\le n} 2\bigg(\frac{m(m-1)}{n(n-1)} A_{i_1,j}^2 A_{i_2,j}^2 - \bigg(\frac{m}{n}\bigg)^2 A_{i_1,j}^2 A_{i_2,j}^2\bigg)\\
&= \frac{m(n-m)}{n^2(n-1)} \sum_{1\le i_1<i_2\le n} (A_{i_1,j}^2-A_{i_2,j}^2)^2\le \frac{m}{n(n-1)} \sum_{1\le i_1<i_2\le n} 4\|A\|_\infty^2 (A_{i_1,j}-A_{i_2,j})^2\\
&\le \frac{m}{n(n-1)} \sum_{1\le i_1<i_2\le n} 4\|A\|_\infty^2 (2A_{i_1,j}^2+2A_{i_2,j}^2) = \frac{8m}{n} \|A\|_\infty^2 \|A_{*,j}\|_2^2\le \frac{8m}{n} \|A\|_\infty^2 \|A\|_{1\to 2}^2.
\end{align*}
By Hoeffding's comparison theorem (Lemma~\ref{lem:Hoeffding}) and Bernstein's inequality (Lemma~\ref{lem:Bernstein}), for any $t \ge 0$,
\[
        \mathbb P\left(\|P_IA_{*,j}\|^2_2 - m\|A_{*,j}\|^2_2/n \ge t\right)
        \le \exp\left(-\frac{t^2}{2\sigma^2 + 2\|A\|_\infty^2 t / 3}\right)\le \exp\left(-\frac{t^2}{16m\|A\|_\infty^2\|A\|_{1\to 2}^2/n + 2\|A\|_\infty^2 t / 3}\right).
\]
In particular, for every $u\ge 0$,
\begin{equation}\label{eq:column-tail^2}
\mathbb P\left(\|P_IA_{*,j}\|^2_2 - m\|A_{*,j}\|^2_{2}/n\ge 4\sqrt{mu/n} \|A\|_{\infty} \|A\|_{1\to 2} + 4\|A\|^2_{\infty} u\right) \le \e^{-u},
\end{equation}
and using that $\|A_{*,j}\|_2 \le \|A\|_{1\to 2}$ allows us to transform \eqref{eq:column-tail^2} to
\begin{equation}\label{eq:column-tail}
\mathbb P\left(\|P_IA_{*,j}\|_2\ge \sqrt{m/n} \|A\|_{1\to 2} + 2\sqrt{u} \|A\|_{\infty} \right) \le \e^{-u}.
\end{equation}
Next, condition on $I'$ and recall that $|I'|=m'$. 
By~\eqref{eq:12} and a union bound over the $m'$ elements in $I'$,
\begin{equation}\label{eq:bdu}
\mathbb P(\|P_IAP_{I'}\|_{1\to 2} \ge \sqrt{m/n}\|A\|_{1\to 2} + 2\sqrt{u+\log m'}\|A\|_{\infty} \mid I') \le m'\e^{-u-\log m'} = \e^{-u}. 
\end{equation}
To bound the left hand side in~\eqref{eq:selected-column}, set $t(0) \coloneqq (\sqrt{m/n}\|A\|_{1\to 2} + 2\sqrt{\log m'}\|A\|_{\infty})^{p}$ and note that
\[\mathbb E \|P_IAP_{I'}\|^p_{1\to 2} = \mathbb E\bigg[\int_{0}^{\infty} \mathbb P(\|P_IAP_{I'}\|^p_{1\to 2}\ge t\mid I') \mathrm{d} t\bigg] \le t(0) + \mathbb E\bigg[\int_{t(0)}^{\infty} \mathbb P(\|P_IAP_{I'}\|^p_{1\to 2}\ge t\mid I') \mathrm{d} t\bigg].\]
By performing the change of variables $t(u) = (\sqrt{m/n}\|A\|_{1\to 2} + 2\sqrt{u+\log m'}\|A\|_{\infty})^p$ and using~\eqref{eq:bdu}, 
\begin{align*}
\mathbb E\bigg[\int_{u=0}^{\infty} \mathbb P\bigg(\|P_IAP_{I'}\|^p_{1\to 2}\ge \left(\sqrt{m/n}\|A\|_{1\to 2} + 2\sqrt{u+\log m'}\|A\|_{\infty}\right)^{p}\mid I'\bigg) t'(u) \mathrm{d} u\bigg]\le \int_{u=0}^{\infty} t'(u) \e^{-u}\mathrm{d} u.
\end{align*}
To evaluate the remaining integral, we use integration by parts. For an exponential random variable $U \sim \text{Exp}(1)$,
\begin{align*}
\int_{u=0}^{\infty} t'(u) \e^{-u} \mathrm{d} u = \left[ t(u) \e^{-u} \right]_{0}^\infty + \int_{0}^{\infty} t(u) \e^{-u} \mathrm{d}u\le \mathbb{E}[t(U)],
\end{align*}
where we ignored the first term and extended the range of integration in the second term in the middle expression to derive the inequality.
By Minkowski's inequality applied to the $\ell^p$-norm and then to the $\ell^{p/2}$-norm, we obtain
\[
\mathbb{E}[t(U)]^{1/p} \le \sqrt{m/n}\|A\|_{1\to 2} + \mathbb E[(U+\log m')^{p/2}]^{1/p}\cdot 2\|A\|_{\infty} \le  \sqrt{m/n}\|A\|_{1\to 2} +  \sqrt{\mathbb E[U^{p/2}]^{2/p}+\log m'}\cdot 2\|A\|_{\infty}.\]
Using that the $r$-norm of an $\mathrm{Exp}(1)$ random variable is equal to $\Gamma(r+1)$ and the Gamma function satisfies $\Gamma(r+1)^{1/r} \le r$ for all $r \ge 1$, we obtain
\[
\mathbb E \|P_IAP_{I'}\|^p_{1\to 2}\le t(0) + \mathbb E[t(U)]\le t(0) + (\sqrt{m/n}\|A\|_{1\to 2} + 2\sqrt{p/2+\log m'} \|A\|_{\infty})^p.
\]
Finally, taking the $p$-th root of both sides and using the inequality $(x+y)^{1/p} \le x^{1/p} + y^{1/p}$, for any $x,y\ge 0$, shows that
\[(\mathbb E \|P_IAP_{I'}\|^p_{1\to 2})^{1/p}\le 2\sqrt{m/n}\|A\|_{1\to 2} + 4\sqrt{p/2+\log m'} \|A\|_{\infty},\]
as desired.
\end{proof}

We are ready to state and prove our main technical tool in the proof of Theorem~\ref{thm:patched1}, which sharpens Corollary~6.8 from~\cite{FHMV25}.

\begin{proposition}\label{prop:fixed68}
Fix a real symmetric $n\times n$ matrix $A$ and a uniformly random subset $R\subseteq [n]$ of size $M\in [n]$. Then, for every $p\ge 2$ and $q=\max\{p,2\log M\}$,
\begin{equation}\label{eq:fixed68}
        (\mathbb E\|P_RAP_R\|^p)^{1/p}
        \le
        24\sqrt{2Mq/n} \|A\|_{1\to 2}
        +50q\|A\|_{\infty} + 4M\|A\|/n.
\end{equation}
\end{proposition}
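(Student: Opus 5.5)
The plan is to remove the diagonal, decouple the off-diagonal part, and then apply Tropp's column-extraction bound twice (Lemma~\ref{lem:compression}): once for the columns and once for the rows. The remaining $\ell^1\to\ell^2$ term is handled by Lemma~\ref{lem:selected-column}.

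\emph{Step 1: split off the diagonal.} Write $A=D+A_0$, where $D$ is the diagonal part of $A$ and $A_0$ has zero diagonal. Deterministically $\|P_RDP_R\|\le\|D\|\le\|A\|_\infty$, and $\|A_0\|\le 2\|A\|$ since $\|D\|\le\|A\|$. By Minkowski it then suffices to bound $(\mathbb E\|P_RA_0P_R\|^p)^{1/p}$ by roughly $24\sqrt{2Mq/n}\,\|A\|_{1\to2}+49q\|A\|_\infty+4M\|A\|/n$. Some constant slack may have to be absorbed, for instance by bounding $\|A_0\|$ through $\|A\|$ on the relevant term. Lemma~\ref{lem:decoupling} then gives a balanced partition $(S,T)$ and reduces the problem to $2\max_{m+m'=M}(\mathbb E\|P_IBP_{I'}\|^p)^{1/p}$ with $B=(A_0)_{S\times T}$. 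Here $I,I'$ are independent uniform subsets of $S,T$ of sizes $m,m'\le M$. We have $\|B\|_{1\to2}\le\|A\|_{1\to2}$ and $\|B\|_\infty\le\|A\|_\infty$ by~\eqref{eq:12}, and $\|B\|\le\|A_0\|$ by Lemma~\ref{lem:operator}(ii). Since $|S|,|T|\ge\lfloor n/2\rfloor$, we also have $m/|S|,\ m'/|T|\le 2M/n$, up to a negligible rounding factor.

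\emph{Step 2: extract the columns.} Condition on $I$ and apply Lemma~\ref{lem:compression} to the matrix $P_IB$ with the random column set $I'$. Because $\mathrm{rk}(P_IBP_{I'})\le M$, the parameter $\max\{2,p/2,2\log\mathrm{rk}\}$ is at most $q$. This gives
\[
(\mathbb E\|P_IBP_{I'}\|^p)^{1/p}\le 3\sqrt q\,(\mathbb E\|P_IBP_{I'}\|_{1\to2}^p)^{1/p}+\sqrt{2M/n}\,(\mathbb E\|P_IB\|^p)^{1/p}.
\]
Lemma~\ref{lem:selected-column} bounds the first expectation by $2\sqrt{2M/n}\,\|A\|_{1\to2}+4\sqrt{p/2+\log M}\,\|A\|_\infty\le 2\sqrt{2M/n}\,\|A\|_{1\to2}+4\sqrt q\,\|A\|_\infty$.

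\emph{Step 3: extract the rows.} For the second expectation, use Lemma~\ref{lem:operator}(i) to write $\|P_IB\|=\|B^TP_I\|$, and apply Lemma~\ref{lem:compression} again with the random column set $I$. This gives $3\sqrt q\,\|B^T\|_{1\to2}+\sqrt{2M/n}\,\|B\|$. Here $\|B^TP_I\|_{1\to2}\le\|B^T\|_{1\to2}\le\|A\|_{1\to2}$ holds deterministically, since column norms of a submatrix of the symmetric matrix $A_0$ are bounded by column norms of $A$.

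\emph{Step 4: collect terms.} The $\|A\|_{1\to2}$-coefficient is $2(6\sqrt2+3\sqrt2)\sqrt{Mq/n}\le 24\sqrt{2Mq/n}$. The $\|A\|_\infty$-coefficient is $2\cdot 12q$, plus the diagonal contribution, comfortably below $50q$. The $\|A\|$-term is $2\cdot(2M/n)\|A_0\|$, which is where care is needed. I expect the main difficulty to be purely bookkeeping. First, the factor $2$ from $\|A_0\|\le 2\|A\|$ must be avoided or absorbed; I would try to apply Lemma~\ref{lem:operator}(ii) to $A$ directly, writing $B$ as a submatrix of $A$ with the $S\times T$ diagonal-free structure. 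Second, the rounding in $|S|=\lfloor n/2\rfloor$ must be handled. Third, the parameter $q$ in Lemma~\ref{lem:compression} involves the random rank, so it must be dominated uniformly by $\max\{p,2\log M\}$.
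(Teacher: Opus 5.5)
Your proposal is correct and follows essentially the paper's route: split off the diagonal, decouple via Lemma~\ref{lem:decoupling}, apply Lemma~\ref{lem:compression} first to the columns and then, after transposing, to the rows, and control the remaining $\ell^1\to\ell^2$ term with Lemma~\ref{lem:selected-column}. Your two bookkeeping choices differ slightly from the paper's and are both valid and a bit cleaner. First, you bound $\|B^TP_I\|_{1\to2}\le\|A\|_{1\to2}$ deterministically, where the paper invokes Lemma~\ref{lem:selected-column} a second time. Second, you note that $(A_0)_{S\times T}=A_{S\times T}$ because $S\cap T=\emptyset$, so $\|B\|\le\|A\|$; the paper instead uses $\|A_0\|\le\|A\|+\|D\|$ and absorbs $\|D\|\le\|A\|_\infty$ into the $q\|A\|_\infty$ term.
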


\begin{proof}
First, suppose that the main diagonal of $A$ contains only zeros. By Lemma~\ref{lem:decoupling}, it suffices to bound the $p$-norm of $\|P_IA_{S\times T}P_{I'}\|$ where $P_I=P_{I,|S|}$ and $P_{I'}=P_{I',|T|}$ for any partition $(S,T)$ of $[n]$, 
positive integers $m,m'$ with $m+m'=M$, $m\le |S|$, $m'\le |T| $ and independent subsets $I\subseteq S$ and $I'\subseteq T$ with sizes $m,m'$, respectively. 
%Define $\rho \coloneqq m/|S|$ and $\rho'\coloneqq m'/|T|$.

Next, condition on $I$ and apply Lemma~\ref{lem:compression} on the column restriction $I'$. 
Since the submatrix $P_{I} A_{S\times T} P_{I'}$ has rank at most $\min\{m,m'\}$, we have $2\log(\text{rk}(P_I A_{S\times T} P_{I'})) \le 2\log M \le q$ and therefore, conditionally on $I$,
\begin{equation}\label{eq:proof-first}
        Z\coloneq \mathbb E [\|P_IA_{S\times T}P_{I'}\|^p\mid I]^{1/p}
        \le
        3\sqrt q\cdot \mathbb E
        [\|P_{I}A_{S\times T} P_{I'}\|^p_{1\to2}\mid I]^{1/p}
        +\sqrt{m'/|T|}\cdot \|P_{I}A_{S\times T}\|.
\end{equation}
By Lemma~\ref{lem:selected-column} and the inequalities $p\le q$ and $\log m' \le \log M \le q/2$, we have $p/2 + \log m' \le q$. Thus,
\begin{equation}\label{eq:proof-col}
        (\mathbb E \|P_{I,n}AP_{I',n'}\|^p_{1\to 2})^{1/p}
        \le
        2\sqrt{m/|S|}\cdot \|P_I A_{S\times T}\|_{1\to2}
        + 4\sqrt{q}\|P_I A_{S\times T}\|_\infty \le 2\sqrt{m/|S|}\cdot \|A\|_{1\to 2} + 4\sqrt{q}\|A\|_{\infty}.
\end{equation}
Thus, taking $p$-norms on both sides in~\eqref{eq:proof-first} (seen as $I$-measurable random variables) and using~\eqref{eq:proof-col} together with Minkowski's inequality for the right hand side yields
\begin{equation}\label{eq:pnorm}
\mathbb E[\|P_IA_{S\times T}P_{I'}\|^p]^{1/p} = \mathbb E[Z^p]^{1/p}\le
        3\sqrt{q} (2\sqrt{m/|S|}\cdot\|A\|_{1\to 2} + 4\sqrt{q}\|A\|_{\infty})
        +\sqrt{m'/|T|}\cdot \mathbb E[\|P_{I}A_{S\times T}\|^p]^{1/p},
\end{equation}
where the first expectation is with respect to $I,I'$ while the second and the third expectations are only with respect to $I$.
For the last expectation in \eqref{eq:pnorm}, we apply Lemma~\ref{lem:compression} to $A_{S\times T}^T$ and the column restriction $I$. 
By recalling that $P_I$ is symmetric and thus $\|P_IA_{S\times T}\|=\|A_{S\times T}^T P_I\|$ thanks to Lemma~\ref{lem:operator}(i), we have that
\begin{equation}\label{eq:proof-row}
\mathbb E[\|P_{I}A_{S\times T}\|^p]^{1/p} = \mathbb E [\|A_{S\times T}^T P_{I}\|^p]^{1/p}\le
        3\sqrt q\cdot \mathbb E[\|A_{S\times T}^T P_I\|^p_{1\to 2}]^{1/p}
        +\sqrt{m/|S|}\cdot\|A_{S\times T}^T\|.
\end{equation}
Furthermore, by using Lemma~\ref{lem:operator}(i) for the matrix $A$ and Lemma~\ref{lem:selected-column} with $P_{I,n}=I_{m'}$ and $P_{I',n}=P_I$, we have
\begin{equation}\label{eq:proof-row2}
\begin{split}
3\sqrt q\cdot \mathbb E[\|A_{S\times T}^T P_I\|^p_{1\to 2}]^{1/p}
        &+\sqrt{m/|S|}\cdot\|A_{S\times T}^T\|\\
        &\le 
        3\sqrt q\cdot (2\|A_{S\times T}^T\|_{1\to 2}+4\sqrt{p/2+\log m}\cdot \|A_{S\times T}^T\|_{\infty})+\sqrt{m/|S|}\cdot\|A_{S\times T}^T\|.
\end{split}
\end{equation}
Combining \eqref{eq:pnorm}, \eqref{eq:proof-row}, \eqref{eq:proof-row2} and using that $\|A_{S\times T}^T\|_{\infty}\le \|A\|_{\infty}$, that $\|A_{S\times T}^T\|_{1\to 2}\le \|A\|_{1\to 2}$ since the matrix $A$ is symmetric, and that $\|A_{S\times T}^T\|\le \|A\|$ by Lemma~\ref{lem:operator}(ii), we obtain
\begin{equation}\label{eq:combined-raw}
\begin{split}
\mathbb E[\|P_IA_{S\times T}P_{I'}\|^p]^{1/p} 
\le \Big(6&\sqrt{qm/|S|} + 6\sqrt{qm'/|T|}\Big)\cdot\|A\|_{1\to 2}\\
&+ \Big(12q+12\sqrt{q(m'/|T|)(p/2+\log m)}\Big) \|A\|_{\infty} + \sqrt{mm'/|S||T|}\cdot\|A\|.
\end{split}
\end{equation}
To finish the proof for the all-zero diagonal case, note that $(m'/|T|)(p/2+\log m)\le q$ as well as
\[\sqrt{m/|S|}+\sqrt{m'/|T|}\le 2\sqrt{M/(n/2)} = 2\sqrt{2M/n}\quad\text{and}\quad \sqrt{mm'/|S||T|}\le M/(n/2) = 2M/n.\]
Therefore,
\begin{equation}
        (\mathbb E\|P_RA P_R\|^p)^{1/p}
        \le
        2\left(12\sqrt{2Mq/n} \|A\|_{1\to 2}
        +24q\|A\|_{\infty} + 2M\|A\|/n\right).
\end{equation}

We now consider the case where $A$ may have non-zero entries on its diagonal. Write $A_0=A-D$, where $D$ is the diagonal $n\times n$ matrix with entries $A_{1,1},\ldots,A_{n,n}$. 
By the previous case applied to $A_0$, we have
\begin{equation}
        (\mathbb E\|P_RA_0 P_R\|^p)^{1/p}
        \le
        24\sqrt{2Mq/n} \|A_0\|_{1\to 2}
        +48q\|A_0\|_{\infty} + 4M\|A_0\|/n.
\end{equation}
Now, note that $\|{A_0}\| \le \|{A}\|+\|{D}\|$, $\|{A_0}\|_{1\rightarrow 2} \le \|{A}\|_{1\rightarrow 2}$ by~\eqref{eq:12}, $\|{A_0}\|_\infty \le \|{A}\|_\infty$ and 
$\|{P_JAP_J}\| \le \|{P_JA_0P_J}\|+\|{P_JDP_J}\|\le \|{P_JA_0P_J}\|+\|{D}\|$ by Lemma~\ref{lem:operator}(ii). By Minkowski's inequality, this implies
\[
    (\mathbb E\|P_RA P_R\|^p)^{1/p} \le (\mathbb E\|P_R A_0 P_R\|^p)^{1/p} + \|{D}\|
    \le 24\sqrt{2Mq/n} \|A\|_{1\to 2}+48q\|A\|_{\infty} + 4M(\|A\|+\|D\|)/n+ \|{D}\|.
\]
Notice that $\|{D}\|=\max_{i\in [n]}|{A_{i,i}}| \le \|{A}\|_\infty$ to complete the proof.
\end{proof}

\subsubsection{\texorpdfstring{Proof of Theorem~\ref{thm:patched1}}{Proof of Theorem~4.16}}

First, we establish the regularity of the percolated subgraph $H$.
By using Lemma~\ref{lem:chernoff_hyp} and a union bound, we have
\begin{align*}
\mathbb{P}(\exists v\in X: \deg_H(v)\notin(1\pm2\gamma)\sigma d) 
&\le\sum_{v\in V(G)}\mathbb{P}(v\in X)\mathbb{P}(\deg_H(v)\notin(1\pm2\gamma)\sigma d\mid v\in X)\\
&\le 2m\exp\bigg(-\frac{(\gamma \sigma d)^2}{3\sigma d+\gamma \sigma d}\bigg)\le 2m\exp(-\gamma^2 \sigma d/4),
\end{align*}
where in the second inequality we used suboptimal constants in the denominator to account for the fact that $v$ is already included in $X$, and thus we deal with a random subset of size $m-1$ of a set of size $n-1$ (which are both suitably large).
Under the degree hypothesis $\sigma d\ge C\gamma^{-2}\log m$, fixing $C$ sufficiently large yields failure probability at most $m^{-1/100}/2$.

Next, denote by $d_1,\ldots,d_n$ the degrees of the vertices in $G$, 
by $\mathbbm{1}_n$ the all-1 column vector of length $n$, 
and denote by $D$ the corresponding diagonal matrix of dimensions $n\times n$. Also, denote by $A_G,A_H$ the adjacency matrices of $G,H$.
To bound the second singular value $s_2(A_H)$, we would like to discard the largest eigenvalue of $A_H$ and instead have $s_2(A_H)$ as an operator norm of some related matrix. 
To this end, define
\begin{equation*}
B=D^{-1/2}A_GD^{-1/2}-a^{-1} (D^{1/2}\mathbbm{1}_n)\cdot (\mathbbm{1}_n^T D^{1/2})
        \qquad\text{where}\qquad
        a=d_1+d_2+\ldots+d_n.
\end{equation*}
Note that the entries of the matrix $B$ are given by
\begin{equation}\label{eq:B}
B_{ij} = \frac{A_{ij}}{\sqrt{d_i d_j}} - \frac{\sqrt{d_i d_j}}{a},
\end{equation}
providing the following information for the norms of $B$:
\begin{enumerate}[label={(\alph*)},ref=\alph*]
    \item\label{item:1} for the $\ell_2\to \ell_2$ operator norm: by Lemma~\ref{lem:low-rank-approximation} and Lemma~\ref{obs: |N|} we have
    %since $D^{1/2}w$ is equal to the principal eigenvector of $D^{-1/2}A_GD^{-1/2}$ with principal eigenvalue 1, 
    %the rank-one projection (second term in $B$) eliminates this eigenvalue and therefore, since all entries of $A_G$ and $D$ are non-negative,
    \[\|B\|=s_2(D^{-1/2}A_GD^{-1/2})\le s_2(A_G)\|D^{-1/2}\|^2_{\infty} \le (1+\gamma)\lambda /d.\]
    \item for the infinity norm: since $A_{ij} \in \{0, 1\}$, $d_i = (1\pm \gamma)d$ and $a = (1\pm \gamma) nd$, using~\eqref{eq:B}, it can be readily checked that the maximum absolute value of any entry is at most $(1+\gamma)^2/d$.
    \item for the $\ell_1 \to \ell_2$ operator norm: the triangle inequality yields
    \begin{equation}    \label{eq: B12}
        \|B\|_{1\rightarrow 2} 
        \le \|{D^{-1/2}A_GD^{-1/2}}\|_{1\rightarrow 2} + a^{-1} \|(D^{1/2}w)\cdot (w^T D^{1/2})\|_{1\rightarrow 2}.        
    \end{equation}
    Let us bound each of the terms appearing on the right hand side. First, 
    \[
    \|{D^{-1/2}A_GD^{-1/2}}\|_{1\rightarrow 2}
    \le \|{D^{-1/2}}\|^2_{\infty} \|{A_G}\|_{1\rightarrow 2} = \|{D^{-1}}\|_{\infty} \|{A_G}\|_{1\rightarrow 2}.
    \]
    We have $\|{D^{-1}}\|_{\infty} = \max_{i\in [n]} (1/d_i) \le (1+\gamma)^2/d$ 
    and $\|{A_G}\|_{1\rightarrow 2} = \max_{i\in [n]} \sqrt{d_i} \le (1+\gamma)\sqrt{d}$. Thus, 
    \begin{equation}    \label{eq: AGbar}
        \|{D^{-1/2}A_GD^{-1/2}}\|_{1\rightarrow 2} \le (1+\gamma)^3/\sqrt{d},   
    \end{equation}
    and also
    \begin{equation}    \label{eq: D11D 12}
        \|{D^{1/2} \mathbbm{1}_n \mathbbm{1}_n^T D^{1/2}}\|_{1\rightarrow 2}
        \le \|{D}\|_{\infty} \cdot \|{\mathbbm{1}_n \mathbbm{1}_n^T}\|_{1\rightarrow 2}
        \le (1+\gamma)^2d \cdot \sqrt{n}.   
    \end{equation}
    Combining \eqref{eq: B12}, \eqref{eq: AGbar}, \eqref{eq: D11D 12} and the inequality $a \ge (1-\gamma)dn$, we obtain
    \begin{equation}    \label{eq: B12 bounded}
        \|B\|_{1\rightarrow 2} 
        \le \frac{(1+\gamma)^3}{\sqrt{d}} + \frac{1}{(1-\gamma)dn} \cdot (1+\gamma)^2d\sqrt{n}
        \le \frac{2.5}{\sqrt{d}}.
    \end{equation}
\end{enumerate}
Set $q = \max\{p,2\log m\}$. 
Then, by Proposition~\ref{prop:fixed68},
\[
\begin{split}
        \mathbb{E}[\|P_XBP_X\|^p]^{1/p}
        &\le
        24\sqrt{2mq/n}\|B\|_{1\to 2}+50q\|B\|_{\infty}+4m\|B\|/n\\
        &\le 60\sqrt{2mq/nd}+50q(1+\gamma)^2/d+4(1+\gamma)m\lambda/nd \eqqcolon \Psi
\end{split}
\]
%Multiplying by $D^{1/2}$ on the left and on the right and using the non-negativity of the entries of all matrices shows that
%\begin{equation}\label{eq:D-half-bound}
%        \mathbb{E}[\| D^{1/2}P_XB P_XD^{1/2}\|^p]^{1/p}\le \mathbb{E}[\|P_XBP_X\|^p]^{1/p}\cdot \min\left\{\sqrt{d_i}:i\in [n]\right\}^2
%        \le
%        (1+\gamma)\Psi d.
%\end{equation}
Now, a closer look at the matrix $P_XB P_X$ shows that actually
\begin{align*}
P_XB P_X 
&= P_X D^{-1/2} A_G D^{-1/2} P_X - a^{-1} (P_X D^{1/2} \mathbbm{1}_n)\cdot (\mathbbm{1}_n^T D^{1/2} P_X)\\ 
&= D^{-1/2} A_H D^{-1/2} - a^{-1} (P_X D^{1/2} \mathbbm{1}_n)\cdot (\mathbbm{1}_n^T D^{1/2} P_X).
\end{align*}
Now, using that $(P_X D^{1/2} \mathbbm{1}_n)\cdot (\mathbbm{1}_n^T D^{1/2} P_X)$ is a matrix of rank one and $P_X D^{1/2} \mathbbm{1}_n$ is a common eigenvector of each of $D^{-1/2} A_H D^{-1/2}$ and $(P_X D^{1/2} \mathbbm{1}_n)\cdot (\mathbbm{1}_n^T D^{1/2} P_X)$, we have
\[s_2(D^{-1/2} A_H D^{-1/2})\le \|D^{-1/2} A_H D^{-1/2} - a^{-1} (P_X D^{1/2} \mathbbm{1}_n)\cdot (\mathbbm{1}_n^T D^{1/2} P_X)\| = \|P_XB P_X\|,\]
and therefore 
\[s_2(A_H)\le s_2(D^{-1/2} A_H D^{-1/2})\cdot \|D^{1/2}\|^2_{\infty} \le (1+\gamma) d \|P_XBP_X\| \implies \mathbb E[s_2(A_H)^p]^{1/p}\le (1+\gamma) d \Psi.\]

Next, choosing $p=K\log m$ for a suitably large constant $K$ forces $q = p$. 
Applying Markov's inequality to the $p$-th moments then yields
\[
        \mathbb{P}(s_2(A_H)\ge 2(1+\gamma) d \Psi)
        \le 2^{-p}
        \le m^{-1/100}/2.
\]
Consequently, on the complimentary high-probability event, we have
\begin{equation}\label{eq:s2-before-absorb}
        s_2(A_H)
        \le 2(1+\gamma) d \Psi
        \le 120(1+\gamma)\sqrt{2mqd/n}+100q(1+\gamma)^3+8(1+\gamma)^2m\lambda/n.
\end{equation}
To show that the right-hand side is strictly bounded by $6\sigma\lambda$, we analyse each term separately.
First, since $m = \lfloor \sigma n \rfloor \le \sigma n$, the last term satisfies
\[
8(1+\gamma)^2 m\lambda/n \le 8(1+\gamma)^2\sigma\lambda.
\]
Given the parameter restriction $\gamma \in (0, 1/200]$, this term is at most $8(1.005)^2\sigma\lambda\le 8.1\sigma\lambda$.

Next, recalling~\eqref{eq:patched-hypotheses} and that $q = p = K\log m$, the first two terms in~\eqref{eq:s2-before-absorb} are dominated by 
\begin{equation}\label{eq:6050}
120\sqrt{2K}(1+\gamma)\sqrt{\sigma d\log m}\qquad \text{and}\qquad 100K(1+\gamma)^3\log m,    
\end{equation}
respectively. 
From the assumed spectral hypothesis $\sigma\lambda \ge C\sqrt{\sigma d\log m}$, we obtain the relation
\begin{equation}\label{eq:sigmalog}
\sqrt{\sigma d\log m} \le \sigma\lambda/C \qquad \text{and}\qquad \log m\le (\lambda/C^2 d)\cdot \sigma\lambda\le \sigma \lambda/C^2.
\end{equation}
Consequently, the terms in~\eqref{eq:6050} satisfy
\[
120(1+\gamma)\sqrt{2mqd/n} + 100q(1+\gamma)^3 \le \left( \frac{120\sqrt{2K}(1+\gamma)}{C} + \frac{100K(1+\gamma)^3}{C^2} \right)\sigma\lambda.
\]
By choosing the implicit absolute constant $C$ to be sufficiently large relative to $K$, this combined error contribution can be made arbitrarily small.
In particular, we can bound it from above by $1.95\sigma\lambda$, and substituting this back into~\eqref{eq:s2-before-absorb} shows that
\[
s_2(A_H) \le 8.1\sigma\lambda + 1.95\sigma\lambda \le 11\sigma\lambda.
\]
Finally, observe that since $A_H$ is an adjacency matrix of a graph, it is a block diagonal matrix, thus by Lemma~\ref{lem:operator}(iii),(iv), $s_2(A_H)=\max\{|\lambda_2(A_H)|,|\lambda_n(A_H)|\}$, which completes the proof.

\section{Concluding Remarks}\label{s: conclusion}

In this paper, we proved that every $(n,d,\lambda)$-graph, with $\lambda\le c_1d$ for a sufficiently small constant $c_1>0$, contains an induced cycle of length $\Theta(n\log(d/\lambda)/d)$. 

We conclude with several directions for future research. First of all, while our bound is optimal up to the value of the implicit constant, understanding the leading term more precisely remains an interesting challenge. In particular, one could try to compare the length of a longest induced cycle with the trivial benchmark of twice the size of a maximum independent set.
Second, whether our result holds for other classic (and potentially more general) families of graphs, such as jumbled graphs, remains open. Finally, in the spirit of~\cite{AKS07, arXiv:2608.06358, GH24, JKW12, M19}, it would be very interesting to understand whether every tree with bounded maximum degree and size $\Theta(n\log(d/\lambda)/d)$ can be embedded as an induced subgraph of an $(n,d,\lambda)$-graph. 
In a related direction, one could ask for the universality of (dense) pseudorandom graphs for all graphs on a given number of vertices.

\paragraph{Acknowledgements} The authors would like to thank Anastasia Kireeva for useful comments and suggestion on Section \ref{s: generalisation of FHMV}.

\bibliographystyle{abbrv}
\bibliography{Bib}

\end{document}